\numberwithin{equation}{section}
\newtheorem{theorem}{Theorem}[section]
\newtheorem{corollary}[theorem]{Corollary}
\newtheorem{lemma}[theorem]{Lemma}
\newtheorem{proposition}[theorem]{Proposition}
\theoremstyle{remark}
\newtheorem{remark}[theorem]{Remark}
\theoremstyle{definition}
\title[Extended Sobolev scale for vector bundles]
{Extended Sobolev scale for vector bundles,\\ and its applications}
\author[A. Murach]{Aleksandr Murach}
\address{Institute of Mathematics of the National Academy of Sciences of Ukraine, 3 Tereshchen\-kivs'ka Street, Kyiv, 01024, Ukraine}
\email{murach@imath.kiev.ua}
\author[T. Zinchenko]{Tetiana Zinchenko}
\address{Grace-Hopper-Gesamtschule Teltow, 146 Mahlover Street, Teltow, 14513, Germany}
\email{Tetiana.zinger@icloud.com}
\subjclass[2010]{Primary 35J48, 58J05; Secondary 46B70, 46E35}
\keywords{Vector bundle; OR-varying function; interpolation between spaces, elliptic operator, Fredholm property, local smoothness}
\begin{document}
\renewcommand{\rho}{\varrho}
\maketitle

\begin{abstract}
We study an extended Sobolev scale for smooth vector bundles over a smooth closed manifold. This scale is built on the base of inner product distribution spaces of generalized smoothness given by an arbitrary positive function OR-varying at infinity. We show that this scale is obtained by the quadratic interpolation (with a function parameter) between inner product Sobolev spaces, is closed with respect to the quadratic interpolation, and consists  of all Hilbert spaces that are interpolation spaces between inner product Sobolev spaces. Embedding theorems and a duality theorem are proved for this scale. We give applications of the extended Sobolev scale to mixed-order (Douglis--Nirenberg) elliptic pseudodifferential operators acting between vector bundles of the same rank. We prove their Fredholm property on appropriate pairs of spaces on the scale, give a sufficient and necessary condition for the local generalized smoothness of solutions to a mixed-order elliptic system and provide a corresponding \textit{a priori} estimate of the solutions. We also give a sufficient condition for a chosen component of the solution to be $q$ times continuously differentiable on a subset of the manifold.
\end{abstract}

\section{Introduction}

An extended Sobolev scale over $\mathbb{R}^{n}$ and over smooth closed manifolds was introduced and investigated by Mikhailets and Murach \cite{MikhailetsMurach09RepNASU3, MikhailetsMurach13UMJ3, MikhailetsMurach14, MikhailetsMurach15ResMath1}. It is built on the base of the H\"ormander spaces $B_{p,k}$ \cite[Section 2.2]{Hermander63} considered in the Hilbert isotropic case where $p=2$ and $k(\xi)\equiv \varphi(\langle\xi\rangle)$ for some function $\varphi: [1,\infty)\rightarrow(0,\infty)$ OR-varying  at infinity in the sense of Avakumovi\'c (as usual, $\langle\xi\rangle = (1+|\xi|)^{1/2}$ for $\xi\in \mathbb{R}^n$). This scale can be interpreted as the extended Hilbert scale generated by the operator $(1-\Delta)^{1/2}$ or by more general positive elliptic pseudodifferential operator of the first order \cite[Section~5]{MikhailetsMurachZinchenko25AdvMath}. Very recently it was introduced for the lattice $\mathbb{Z}^{n}$ by Milatovich \cite{Milatovic24JPsDOAppl, Milatovic24CAOT}. If $\varphi$ is a regularly varying function at infinity in the sense of Karamata, the spaces $B_{2,k}$ form the refined Sobolev scale \cite{MikhailetsMurach05UMJ5, MikhailetsMurach06UMJ3, MikhailetsMurach08MFAT1, MikhailetsMurach12BJMA2} as an important part of the above scale.

The above scales are formed by distribution spaces of generalized smoothness given by a function of frequency variables (or by a number sequence representing this function). Such a function parameter allows describing the smoothness of distributions in terms of their Fourier transform far more exactly than it is possible by means of classical distribution spaces whose smoothness is given by a single number. Distribution spaces of generalized smoothness have been actively studied in the last two decades (see, e.g., \cite{AlmeidaCaetano11, Baghdasaryan10, DominguezTikhonov23, FarkasLeopold06, HaroskeMoura08, HaroskeLeopoldMouraSkrzypczak23, LoosveldtNicolay19, MouraNevesSchneider14}). They have various applications in the approximation theory \cite{Stepanets05}, theory of stochastic process \cite{Jacob010205}, and theory of partial differential equations \cite{Faierman20, LosMikhailetsMurach21CPAA10, MikhailetsMurach14, NicolaRodino10}. Certainly, Hilbert spaces of generalized smoothness are of a special interest due to their applications in spectral theory of differential operators \cite{MikhailetsMurach24ProcEdin, MikhailetsMurachZinchenko25AdvMath}.

The extended Sobolev scale has the following important interpolation properties: it is obtained by the quadratic interpolation (with a function parameter) between inner product Sobolev spaces, is closed with respect to the quadratic interpolation, and consists  of all Hilbert spaces that are interpolation spaces between inner product Sobolev spaces. The first of these properties has played a key role in building a general theory of elliptic systems and elliptic boundary value problems on this  scale \cite{AnopChepurukhinaMurach21Axioms, AnopDenkMurach21CPAA, AnopKasirenko16MFAT4, AnopMurach14UMJ, AnopMurach14MFAT2, Murach09UMJ3, MurachZinchenko13MFAT1, ZinchenkoMurach12UMJ11, ZinchenkoMurach14JMathSci} (see also survey \cite{MikhailetsMurachChepurukhina25UMJ2}). Applications of the refined Sobolev scale to elliptic operators and elliptic problems are summarized in \cite{MikhailetsMurach12BJMA2, MikhailetsMurach14}.

In the modern theory of partial  differential equations and their applications, elliptic equations given on vector bundles over manifolds play an important role (see, e.g., \cite{Wells, Hermander07v3}). Therefore, it is useful to introduce and investigated a version of the extended Sobolev scale for these bundles. The present paper is devoted to such a task. We consider smooth vector bundles over a closed (compact) smooth manifold and show that the extended Sobolev scale is well defined for these bundles by the trivialization of the bundle and localization of the manifold and prove that this scale possesses the above-mentioned interpolation properties. We also prove embedding theorems and a duality theorem for the scale. These results are applied to the study of mixed-order (or Douglis--Nirenberg) elliptic pseudodifferential operators on a pair of vector bundles over the manifold. We show that such operators are Fredholm on appropriate pairs of spaces on the extended Sobolev scale. We give a sufficient and necessary condition for solutions of mixed-order elliptic systems to have a prescribed local smoothness on the scale and also establish a corresponding \textit{a priori} estimate of the solutions. We give a sufficient condition for a chosen component of the solution to be $q$ times continuously differentiable on a given subset of the manifold. This condition is exact on the scale. Note that the refined Sobolev scale for vector bundles was introduced and investigated by Zinchenko \cite{Zinchenko17OpenMath}.

\section{The extended Sobolev scale}\label{sec2}

In this section, we will introduce the extended Sobolev scale for a vector bundle over a smooth closed manifold.
We build this scale on the base of its analog for Euclidean space with the help of local charts on the manifold and local trivializations of the bundle. Therefore we recall the definition of the extended Sobolev scale for $\mathbb{R}^{n}$, which was introduced and investigated by Mikhailets and Murach \cite{MikhailetsMurach13UMJ3, MikhailetsMurach14, MikhailetsMurach15ResMath1}.

Let $1\leq n \in \mathbb{Z}$. The extended Sobolev scale over $\mathbb{R}^{n}$ consists of the inner product H\"ormander spaces
$H^{\varphi}(\mathbb{R}^{n})$ whose smoothness index $\varphi$ is given by an arbitrary function from the class $\mathrm{OR}$. By definition, this  class consists of all Borel measurable
functions $\varphi:[1,\infty)\rightarrow(0,\infty)$ for each of which there exist numbers $a>1$ and $c\geq1$ such that
\begin{equation}\label{f3}
c^{-1}\leq\frac{\varphi(\lambda t)}{\varphi(t)}\leq c\quad\mbox{for all}\quad
t\geq1\quad\mbox{and}\quad\lambda\in[1,a]
\end{equation}
(the numbers $a$ and $c$ may depend on $\varphi\in\mathrm{OR}$). These
functions are said to be $\mathrm{OR}$-varying at infinity. They were introduced by Avakumovi\'c \cite{Avakumovic36} in 1936, are well investigated, and have various applications (see, e.g., \cite{BinghamGoldieTeugels89, BuldyginIndlekoferKlesovSteinebach18, Seneta76}).

The class $\mathrm{OR}$ admits the following description \cite[Theorem A~1]{Seneta76}:
\begin{equation*}
\varphi\in\mathrm{OR}\quad\Longleftrightarrow\quad\varphi(t)=
\exp\Biggl(\beta(t)+\int\limits_{1}^{t}\frac{\gamma(\tau)}{\tau}\;d\tau\Biggr), \;\;t\geq1,
\end{equation*}
where the real-valued functions $\beta$ and $\gamma$ are Borel measurable and bounded on $[1,\infty)$. Note \cite[Theorem A~2]{Seneta76} that condition \eqref{f3} is equivalent to the two-sided inequality
\begin{equation}\label{1.1}
c^{-1}\lambda^{s_{0}}\leq\frac{\varphi(\lambda t)}{\varphi(t)}\leq
c\lambda^{s_{1}}\quad\mbox{for each}\quad t\geq1\quad\mbox{and}\quad\lambda\geq1,
\end{equation}
in which (another) constant $c\geq1$ is independent of $t$ and $\lambda$.
For every $\varphi\in\mathrm{RO}$, there exist lower and upper Matuszewska indexes \cite[Section 2.1.2]{BinghamGoldieTeugels89}:
\begin{equation}\label{1.2}
\sigma_{0}(\varphi):=
\sup\,\{s_{0}\in\mathbb{R}:\,\mbox{the left-hand side of \eqref{1.1} is true}\},
\end{equation}
\begin{equation}\label{1.3}
\sigma_{1}(\varphi):=\inf\,\{s_{1}\in\mathbb{R}:\,\mbox{the right-hand side of \eqref{1.1} is true}\}.
\end{equation}
Certainly, $-\infty<\sigma_{0}(\varphi)\leq\sigma_{1}(\varphi)<\infty$.

Let $\varphi\in\mathrm{OR}$. By definition, the complex linear space $H^{\varphi}(\mathbb{R}^{n})$ consists of all
distributions $w\in\mathcal{S}'(\mathbb{R}^{n})$ such that their Fourier transform
$\widehat{w}:=\mathcal{F}w$ is locally Lebesgue integrable over $\mathbb{R}^{n}$ and
satisfies the condition
\begin{equation*}
\int\limits_{\mathbb{R}^{n}}\varphi^2(\langle\xi\rangle)\,
|\widehat{w}(\xi)|^2\,d\xi
<\infty.
\end{equation*}
As usual, $\mathcal{S}'(\mathbb{R}^{n})$ denotes the linear topological space of all tempered distributions given in $\mathbb{R}^{n}$, and
$\langle\xi\rangle:=(1+|\xi|^{2})^{1/2}$ is the smoothed absolute value  of the frequency variable $\xi\in\mathbb{R}^{n}$. The inner product in $H^{\varphi}(\mathbb{R}^{n})$ is defined by the formula
\begin{equation*}
(w_1,w_2)_{H^{\varphi}(\mathbb{R}^{n})}:=
\int\limits_{\mathbb{R}^{n}}\varphi^2(\langle\xi\rangle)\,
\widehat{w_1}(\xi)\,\overline{\widehat{w_2}(\xi)}\,d\xi.
\end{equation*}
It endows $H^{\varphi}(\mathbb{R}^{n})$ with the Hilbert space structure and induces
the norm
\begin{equation*}
\|w\|_{H^{\varphi}(\mathbb{R}^{n})}:=
(w,w)_{H^{\varphi}(\mathbb{R}^{n})}^{1/2}.
\end{equation*}

The space $H^{\varphi}(\mathbb{R}^{n})$ is a special case of the space $\mathcal{B}_{p,k}$ introduced and investigated by H\"ormander \cite[Section 2.2]{Hermander63} (see also his monograph \cite[Section 10.1]{Hermander05v2}). Namely,  $H^{\varphi}(\mathbb{R}^{n}) = \mathcal{B}_{p,k}$ if $p=2$ and $k(\xi)\equiv \varphi(\langle\xi\rangle)$. Note that in the Hilbert case of $p=2$ the space $\mathcal{B}_{p,k}$ was also investigated by Volevich and Paneah \cite[\S~2]{VolevichPaneah65}.

In the case where $\varphi(t) \equiv t^s$ for some $s\in\mathbb{R}$, the space $H^{\varphi}(\mathbb{R}^{n})$ becomes the inner product Sobolev space $H^{(s)}(\mathbb{R}^{n})$ of order $s$. Generally, we have the dense continuous embeddings
\begin{equation}\label{2.34}
\begin{gathered}
H^{(s_1)}(\mathbb{R}^{n})\hookrightarrow
H^{\varphi}(\mathbb{R}^{n})\hookrightarrow
H^{(s_0)}(\mathbb{R}^{n})\\
\mbox{for arbitrary real}\;\; s_0<\sigma_{0}(\varphi)\;\; \mbox{and}\;\; s_1>\sigma_{1}(\varphi).
\end{gathered}
\end{equation}
The function parameter $\varphi$ is naturally said to be the smoothness index of the space $H^{\varphi}(\mathbb{R}^{n})$ (and its versions for manifolds and vector bundles). Following \cite{MikhailetsMurach13UMJ3, MikhailetsMurach14, MikhailetsMurach15ResMath1} (e.g., \cite[p.~105]{MikhailetsMurach14}), we call the class of distribution spaces
\begin{equation*}
\{H^{\varphi}(\mathbb{R}^{n}): \varphi\in\mathrm{OR}\}
\end{equation*}
the extended Sobolev scale over $\mathbb{R}^{n}$.

Let us now introduce its version for a vector bundle over a closed manifold. Let $\Gamma$ be a closed (i. e. compact and without boundary) infinitely smooth real manifold of dimension $n\geq1$. Suppose that a certain $C^\infty$-density $dx$ is given on $\Gamma$.
We choose a finite atlas belonging to the $C^{\infty}$-structure on $\Gamma$. Let this atlas consist of local charts $\alpha_{j}:\mathbb{R}^{n}\leftrightarrow\Gamma_{j}$,
$j=1,\ldots,\varkappa$, where the open sets  $\Gamma_{j}$ form a finite covering of $\Gamma$. We also choose functions $\chi_{j}\in C^{\infty}(\Gamma)$, $j=1,\ldots,\varkappa$,
that satisfy the condition $\mathrm{supp}\,\chi_{j}\subset\Gamma_{j}$ and form a partition of unity on $\Gamma$. Let $\pi: V \rightarrow\Gamma$ be an infinitely smooth complex vector bundle of rank $p\geq1$ over $\Gamma$. Here, $V$ is the total space of the bundle, $\Gamma$ is the base  space, and $\pi$ is the projector (see, e.g., \cite[Chapter~I, Section~2]{Wells}). We choose the set $\Gamma_{j}$ so that the local trivialization $\beta_{j}:\pi^{-1}(\Gamma_{j})\leftrightarrow
\Gamma_{j}\times\mathbb{C}^{p}$ is defined for each $j\in\{1,\ldots,\varkappa\}$.

Let $\mathcal{D}'(\Gamma, V)$ denote the linear topological space of all generalized sections of the vector bundle $\pi:\nobreak V \rightarrow\Gamma$. We interpret $\mathcal{D}'(\Gamma, V)$ as the antidual space to the linear topological space $C^\infty(\Gamma, V)$ of all infinitely smooth sections of this vector bundle. With every generalized section $u\in\mathcal{D}'(\Gamma, V)$, we associate a collection of vector-valued distributions $u^{\times}_j\in (\mathcal{D}'(\Gamma_j))^p$, where $j=1,~\ldots,~\varkappa$, in the following way: $u^{\times}_j(w):=u(w^{\circ}_j)$ for an arbitrary vector-valued function $w\in(C^\infty_0(\Gamma_j))^p$.
Here, the section $w^{\circ}_j\in C^\infty(\Gamma, V)$ is defined by the formula
\begin{equation*}
w^{\circ}_j(x) := \left\{
            \begin{array}{ll}
              \beta_j^{-1}(x,w(x)) & \hbox{if}\; x\in\Gamma_j; \\
              0 & \hbox{if} \;x\in\Gamma\setminus\Gamma_j.
            \end{array}
          \right.
\end{equation*}
As usual, $C^\infty_0(\Gamma_j):= \{v\in C^\infty(\Gamma_j): \mathrm{supp}\, v\subset\Gamma_j\}$, and $\mathcal{D}'(\Gamma_j)$ denotes the linear topological space of all distributions on $\Gamma_j$. We say that $u^{\times}_j$ is the representation of the generalized section $u$ in the local trivialization~$\beta_j$.

Let us introduce the space $H^{\varphi}(\Gamma,V)$ for arbitrary $\varphi\in\mathrm{OR}$. By definition, the linear space $H^{\varphi}(\Gamma, V)$ consists of all generalized sections
$u\in\nobreak\mathcal{D}'(\Gamma, V)$ such that $(\chi_{j}u^{\times}_j)\circ\alpha_{j}\in
(H^{\varphi}(\mathbb{R}^{n}))^p$ for every $j\in\{1,\ldots,\varkappa\}$. Here, $(\chi_{j}u^{\times}_j)\circ\alpha_{j}$ is the representation of the vector-valued distribution $\chi_{j}u^{\times}_{j}\in(\mathcal{D}'(\Gamma_j))^{p}$ in the local chart $\alpha_{j}$. We endow the space $H^{\varphi}(\Gamma,V)$ with the inner product
\begin{equation*}
(u, v)_{H^{\varphi}(\Gamma,V)}:=
\sum_{j=1}^\varkappa((\chi_{j}\,u^{\times}_{j})\circ\alpha_{j}, (\chi_{j}\,v^{\times}_{j})\circ\alpha_{j})_
{(H^{\varphi}(\mathbb{R}^{n}))^{p}},
\end{equation*}
where $u, v\in H^{\varphi}(\Gamma,V)$, and the corresponding norm
\begin{equation*}
\|u\|_{H^{\varphi}(\Gamma,V)}:=(u,u)^{1/2}_{H^{\varphi}(\Gamma,V)}=
\biggl(\,\sum_{j=1}^\varkappa
\|(\chi_{j}\,u^{\times}_{j})\circ\alpha_{j}\|^2_
{(H^{\varphi}(\mathbb{R}^{n}))^{p}}\biggr)^{1/2}.
\end{equation*}
The inner product and the corresponding norm in the Hilbert space $(H^{\varphi}(\mathbb{R}^{n}))^{p}$ are defined in the standard way.

The space $H^{\varphi}(\Gamma,V)$ is Hilbert and separable and does not depend up to equivalence of norms on our choice of the atlas $\{\alpha_j\}$, partition of unity $\{\chi_j\}$, and local trivializations $\{\beta_j\}$. This will be proved bellow as Theorems \ref{t4.2a} and~\ref{t4.2b}.

If $\varphi(t)\equiv t^s$ for some $s\in\mathbb{R}$, then $H^{\varphi}(\Gamma, V)$ becomes the inner product Sobolev space  $H^{(s)}(\Gamma, V)$ of order $s$ (see, e.g., \cite[Chapter~IV, Section~1]{Wells}).

The class of function spaces
\begin{equation}\label{1.36}
\{H^{\varphi}(\Gamma, V): \varphi\in\mathrm{OR}\}
\end{equation}
is called the extended Sobolev scale for the vector bundle $\pi:V\to\Gamma$ \cite[Section~2]{Zinchenko17Collection3}.

In the case of the trivial vector bundle of rank $p=1$, the space  $H^{\varphi}(\Gamma, V)$ consists of distributions on $\Gamma$ and is denoted by $H^{\varphi}(\Gamma)$. In this case the extended Sobolev scale was introduced and investigated in \cite{MikhailetsMurach09RepNASU3, MikhailetsMurach14, MikhailetsMurachZinchenko25AdvMath}.

\section{Quadratic interpolation with function parameter}\label{sec3}

According to \cite[Theorem~2.19]{MikhailetsMurach14}, the extended Sobolev scale over $\mathbb{R}^{n}$ is obtained by the quadratic interpolation with an appropriate function parameter between certain inner product Sobolev spaces. We will prove that this interpolation property is inherited by the extended Sobolev scale for the vector bundle $\pi:V\to\Gamma$ (see Theorem~\ref{t4.1}). This result and the above interpolation method will play a key role in our proofs of properties of this scale. Therefore we recall the definition of this method. It is sufficient for our purposes to restrict ourselves to the case of separable complex Hilbert spaces, we following monograph \cite[Section 1.1]{MikhailetsMurach14}. Note that this method appeared first in Foia\c{s} and Lions' paper \cite[Section 3.4]{FoiasLions61}.

Let $X:=[X_{0},X_{1}]$ be an ordered pair of separable complex Hilbert spaces $X_{0}$ and $X_{1}$ such that $X_{1}$ is a linear manifold in $X_{0}$ and that the embedding $X_{1}\hookrightarrow X_{0}$ is continuous and dense. This pair is called regular. For $X$ there exists a positive-definite self-adjoint operator $J$ in $X_{0}$ with the domain $X_{1}$ such that $\|Ju\|_{X_{0}}=\|u\|_{X_{1}}$ for arbitrary $u\in X_{1}$. The operator $J$ is uniquely determined by the pair $X$ and is called the generating operator for this pair.

Let $\mathcal{B}$ denote the set of all Borel measurable
functions $\psi:(0,\infty)\rightarrow(0,\infty)$ such that $\psi$ is bounded on every
compact interval $[a,b]$, with $0<a<b<\infty$, and that $1/\psi$ is bounded on every
set $[r,\infty)$, with $r>0$. Given $\psi\in\mathcal{B}$, consider the operator $\psi(J)$ defined as the Borel function $\psi$ of the self-adjoint operator $J$ with the help of Spectral Theorem (see, e.g., \cite[Chapter~XII, Section~2]{DunfordSchwartz88-II}).  The operator $\psi(J)$ is (generally) unbounded and positive-definite in $X_{0}$. Let $[X_{0},X_{1}]_{\psi}$ or, simply,
$X_{\psi}$ denote the domain of $\psi(J)$ endowed with the inner product
$(u_{1},u_{2})_{X_{\psi}}:=(\psi(J)u_{1},\psi(J)u_{2})_{X_{0}}$ and the
corresponding norm $\|u\|_{X_{\psi}}=\|\psi(J)u\|_{X_{0}}$. The space $X_{\psi}$ is
Hilbert and separable.

A function $\psi\in\mathcal{B}$ is called an interpolation parameter if the following property is satisfied for arbitrary regular pairs $X=[X_{0},X_{1}]$ and $Y=[Y_{0},Y_{1}]$ of Hilbert spaces and for every linear mapping $T$ given on $X_{0}$: if the restriction of $T$ to $X_{j}$ is a bounded operator $T:X_{j}\rightarrow Y_{j}$ for each $j\in\{0,1\}$, then the restriction of $T$ to $X_{\psi}$ is also a bounded operator $T:X_{\psi}\rightarrow Y_{\psi}$.

If $\psi$ is an interpolation parameter, we will say that the Hilbert space $X_{\psi}$ is obtained by the quadratic interpolation with the function parameter $\psi$ between  $X_{0}$ and $X_{1}$ (or of the pair $X$). In this case, we have
\begin{equation}\label{3.18}
\mbox{the dense continuous embeddings}\quad X_{1}\hookrightarrow X_{\psi}\hookrightarrow X_{0}.
\end{equation}

The function $\psi\in\mathcal{B}$ is an interpolation parameter if and only if $\psi$ is pseudoconcave in a neighbourhood of infinity. The latter property means that there exists a concave
function $\psi_{1}:(b,\infty)\rightarrow(0,\infty)$, with $b\gg1$, that both
functions $\psi/\psi_{1}$ and $\psi_{1}/\psi$ are bounded on $(b,\infty)$. This criterion follows from Peetre's \cite{Peetre66, Peetre68} description of all interpolation functions for the weighted Lebesgue spaces (see \cite[Theorem~1.9]{MikhailetsMurach14}).

\begin{proposition}\label{p1}
Let $\varphi\in\mathrm{OR}$, and suppose that real numbers  $s_0<s_1$ satisfy condition \eqref{1.1}. Define the function $\psi$ by the formula
\begin{equation}\label{3.30}
\psi(t):=
\begin{cases}
\;t^{{-s_0}/{(s_1-s_0)}}\,
\varphi(t^{1/{(s_1-s_0)}})&\text{if}\quad t\geq1, \\
\;\varphi(1)&\text{if}\quad0<t<1.
\end{cases}
\end{equation}
Then the function $\psi\in\mathcal{B}$ is an interpolation parameter, and
\begin{equation}\label{2.22}
H^{\varphi}(\mathbb{R}^n)=
\bigl[H^{(s_0)}(\mathbb{R}^n),H^{(s_1)}(\mathbb{R}^n)\bigr]_{\psi}
\end{equation}
with equality of norms.
\end{proposition}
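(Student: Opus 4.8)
The plan is to prove the two assertions in turn, exploiting that the definition \eqref{3.30} has been reverse-engineered so that the final norm computation closes exactly. Throughout I set $\mu:=s_1-s_0>0$. First I would check $\psi\in\mathcal{B}$. Putting $t=1$ in \eqref{1.1} gives $c^{-1}\lambda^{s_0}\varphi(1)\le\varphi(\lambda)\le c\lambda^{s_1}\varphi(1)$ for $\lambda\ge1$, and substituting $\lambda=t^{1/\mu}$ shows $c^{-1}\varphi(1)\le\psi(t)\le c\varphi(1)\,t$ for $t\ge1$, while $\psi\equiv\varphi(1)$ on $(0,1)$; hence $\psi$ is Borel measurable, bounded on compact subintervals of $(0,\infty)$, and $1/\psi$ is bounded on each $[r,\infty)$, so $\psi\in\mathcal{B}$. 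To show $\psi$ is an interpolation parameter I would verify that it is pseudoconcave near infinity and then invoke the criterion recalled above. The key estimate is obtained by applying \eqref{1.1} to the arguments $\lambda^{1/\mu}$ and $t^{1/\mu}$: for all $\lambda\ge1$ and $t\ge1$,
\[
c^{-1}\le\frac{\psi(\lambda t)}{\psi(t)}\le c\,\lambda .
\]
The left inequality says $\psi$ is quasi-increasing; the right one, rewritten as $\psi(\lambda t)/(\lambda t)\le c\,\psi(t)/t$, says that $t\mapsto\psi(t)/t$ is quasi-decreasing. A routine estimate of the least concave majorant then shows that a function with both properties is equivalent near infinity to a concave function, i.e.\ pseudoconcave there; hence $\psi$ is an interpolation parameter.

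For the interpolation identity I would realize the pair $X:=[H^{(s_0)}(\mathbb{R}^n),H^{(s_1)}(\mathbb{R}^n)]$, which is regular since $s_0<s_1$, through its generating operator. I claim this operator is the Fourier multiplier $J$ with symbol $\langle\xi\rangle^{\mu}$: it is symmetric, positive-definite and (being multiplication by a positive real function on the Fourier side) self-adjoint in $H^{(s_0)}(\mathbb{R}^n)$, its domain is exactly $\{w:\int_{\mathbb{R}^n}\langle\xi\rangle^{2s_0+2\mu}|\widehat{w}|^2\,d\xi<\infty\}=H^{(s_1)}(\mathbb{R}^n)$, and Parseval's identity gives $\|Jw\|_{H^{(s_0)}(\mathbb{R}^n)}=\|w\|_{H^{(s_1)}(\mathbb{R}^n)}$, which determines the generating operator uniquely.

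Since $J$ acts as multiplication by $\langle\xi\rangle^{\mu}\ge1$, its spectrum lies in $[1,\infty)$, so the values of $\psi$ on $(0,1)$ are irrelevant and the Spectral Theorem identifies $\psi(J)$ with the multiplier of symbol $\psi(\langle\xi\rangle^{\mu})$. Here \eqref{3.30} is tailored precisely so that $\psi(\langle\xi\rangle^{\mu})=(\langle\xi\rangle^{\mu})^{-s_0/\mu}\varphi((\langle\xi\rangle^{\mu})^{1/\mu})=\langle\xi\rangle^{-s_0}\varphi(\langle\xi\rangle)$. Consequently
\[
\|w\|_{X_\psi}^2=\|\psi(J)w\|_{H^{(s_0)}(\mathbb{R}^n)}^2=\int_{\mathbb{R}^n}\langle\xi\rangle^{2s_0}\,\langle\xi\rangle^{-2s_0}\varphi^2(\langle\xi\rangle)\,|\widehat{w}(\xi)|^2\,d\xi=\int_{\mathbb{R}^n}\varphi^2(\langle\xi\rangle)\,|\widehat{w}(\xi)|^2\,d\xi=\|w\|_{H^{\varphi}(\mathbb{R}^n)}^2 ,
\]
and since the domain of $\psi(J)$ is exactly the set of $w\in\mathcal{S}'(\mathbb{R}^n)$ making the last integral finite, namely $H^{\varphi}(\mathbb{R}^n)$, the two spaces coincide with equality of norms. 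The only genuinely delicate step is the pseudoconcavity of $\psi$, since once the generating operator is correctly identified the spectral computation is forced; alternatively, both conclusions are the $\mathbb{R}^n$ specialization of \cite[Theorem~2.19]{MikhailetsMurach14} and may be cited directly.
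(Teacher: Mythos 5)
Your proof is correct, and it takes a genuinely different route from the paper's. The paper gives no direct argument at all: it derives Proposition~\ref{p1} by citing \cite[Theorem~2.5]{MikhailetsMurachZinchenko25AdvMath}, after noting that $H^{\varphi}(\mathbb{R}^n)$ coincides, with equality of norms, with the space $H^{\varphi}_{A}$ of the extended Hilbert scale generated by $(1-\Delta)^{1/2}$ in $L_{2}(\mathbb{R}^{n})$ (and, for the narrower case $s_0<\sigma_{0}(\varphi)$, $s_1>\sigma_{1}(\varphi)$, it refers to \cite[Theorem~2.19]{MikhailetsMurach14}). You instead reconstruct the underlying argument: you identify the generating operator $J$ of the regular pair $\bigl[H^{(s_0)}(\mathbb{R}^n),H^{(s_1)}(\mathbb{R}^n)\bigr]$ as the Fourier multiplier with symbol $\langle\xi\rangle^{s_1-s_0}$ (legitimate, by the isometry onto $X_1$ and the uniqueness of the generating operator), observe that its spectrum lies in $[1,\infty)$ so that $\psi(J)$ is the multiplier with symbol $\psi(\langle\xi\rangle^{s_1-s_0})=\langle\xi\rangle^{-s_0}\varphi(\langle\xi\rangle)$, and read off equality of norms by Parseval; and you get the interpolation-parameter property by deducing from \eqref{1.1} the two-sided bound $c^{-1}\le\psi(\lambda t)/\psi(t)\le c\lambda$ for $t,\lambda\ge1$, i.e.\ quasi-concavity on $[1,\infty)$, whence pseudoconcavity near infinity via the least concave majorant and the Peetre-type criterion recalled in Section~\ref{sec3}. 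This is in substance the proof behind the cited results, so your version buys self-containedness at the cost of length, while the paper's citation buys brevity and places the proposition inside the general theory of extended Hilbert scales generated by positive elliptic first-order operators.

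Two small points to tighten. First, $X_\psi$ is by definition the domain of $\psi(J)$ inside $X_0=H^{(s_0)}(\mathbb{R}^n)$, so to conclude $X_\psi=H^{\varphi}(\mathbb{R}^n)$ as sets you must also note the inclusion $H^{\varphi}(\mathbb{R}^n)\subset H^{(s_0)}(\mathbb{R}^n)$; this is immediate from your own bound $\psi\ge c^{-1}\varphi(1)$ on $[1,\infty)$, equivalently $\varphi(\langle\xi\rangle)\ge c^{-1}\varphi(1)\langle\xi\rangle^{s_0}$, but it should be said. Second, your closing alternative of citing \cite[Theorem~2.19]{MikhailetsMurach14} ``directly'' does not cover the full statement: as the paper itself points out, that theorem applies when $s_0<\sigma_0(\varphi)$ and $s_1>\sigma_1(\varphi)$, whereas condition \eqref{1.1} also admits the endpoint values $s_0=\sigma_0(\varphi)$ or $s_1=\sigma_1(\varphi)$ when the supremum in \eqref{1.2} or the infimum in \eqref{1.3} is attained; the general case is exactly \cite[Theorem~2.5]{MikhailetsMurachZinchenko25AdvMath}. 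Your direct proof uses only \eqref{1.1} and therefore does cover the general case, so this inaccuracy affects only the side remark, not the argument.
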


This is a consequence of \cite[Theorem~2.5]{MikhailetsMurachZinchenko25AdvMath} because $H^{\varphi}(\mathbb{R}^n)$ coincides (with equality of norms) with the space $H^{\varphi}_{A}$ of the extended Hilbert scale generated by the operator $(1-\Delta)^{1/2}$ in the Hilbert space $L_{2}(\mathbb{R}^{n})$. (As usual, $\Delta$ is the Laplace operator.)

\begin{remark}
As is indicated in \cite[Remark 2.5]{MikhailetsMurach15ResMath1}, condition \eqref{1.1} is equivalent to the following pair of conditions:
\begin{enumerate}
  \item[(i)] $s_0\leq\sigma_0(\varphi)$ and, moreover, $s_0<\sigma_0(\varphi)$ if the supremum in \eqref{1.2} is not attained;
  \item[(ii)]  $\sigma_1(\varphi)\leq s_1$ and, moreover, $\sigma_1(\varphi)< s_1$ if the infimum in \eqref{1.3} is not attained.
\end{enumerate}
\end{remark}

If $s_{0}<\sigma_{0}(\varphi)$ and $s_{1}>\sigma_{1}(\varphi)$, then Proposition~\ref{p1} is due to \cite[Theorem~2.19]{MikhailetsMurach14}.

\section{Basic results}\label{sec4}

They concerns the properties of the scale~\eqref{1.36}. A version of Proposition~\ref{p1} for this scale is formulated as follows:

\begin{theorem}\label{t4.1}
Let $\varphi\in\mathrm{OR}$, and suppose that real numbers  $s_0<s_1$ satisfy condition \eqref{1.1}. Define the interpolation parameter $\psi$ by formula~\eqref{3.30}.  Then
\begin{equation}\label{3.1}
H^{\varphi}(\Gamma, V) = \bigl[H^{(s_0)}(\Gamma, V),H^{(s_1)}(\Gamma, V)\bigr]_{\psi}
\end{equation}
with equivalence of norms.
\end{theorem}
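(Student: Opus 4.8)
The plan is to transport the Euclidean identity of Proposition~\ref{p1} to the bundle by the standard retraction--coretraction technique, exploiting that the quadratic interpolation functor with parameter $\psi$ commutes with finite orthogonal direct sums and with retracts. First I would reduce to $\mathbb{R}^{n}$ via direct sums. Put $X_{k}:=\bigoplus_{j=1}^{\varkappa}(H^{(s_{k})}(\mathbb{R}^{n}))^{p}$ for $k\in\{0,1\}$. Since $s_{0}<s_{1}$, the embedding $H^{(s_{1})}(\mathbb{R}^{n})\hookrightarrow H^{(s_{0})}(\mathbb{R}^{n})$ is continuous and dense, so $[X_{0},X_{1}]$ is a regular pair. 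The generating operator of a finite orthogonal direct sum of regular pairs is the direct sum of the generating operators, whence the spectral calculus gives
\[
[X_{0},X_{1}]_{\psi}=\bigoplus_{j=1}^{\varkappa}\bigl(\bigl[H^{(s_{0})}(\mathbb{R}^{n}),H^{(s_{1})}(\mathbb{R}^{n})\bigr]_{\psi}\bigr)^{p}
\]
with equality of norms. Combined with Proposition~\ref{p1} this yields $[X_{0},X_{1}]_{\psi}=\bigoplus_{j=1}^{\varkappa}(H^{\varphi}(\mathbb{R}^{n}))^{p}=:X_{\varphi}$ with equality of norms; by the very definition of $\|\cdot\|_{H^{\varphi}(\Gamma,V)}$, the norm of $X_{\varphi}$ applied to the tuple $((\chi_{j}u^{\times}_{j})\circ\alpha_{j})_{j}$ equals $\|u\|_{H^{\varphi}(\Gamma,V)}$.

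Next I would set up the operator pair. Define the coretraction $Tu:=((\chi_{j}u^{\times}_{j})\circ\alpha_{j})_{j=1}^{\varkappa}$; by the definitions of the norms, $T\colon H^{(s_{k})}(\Gamma,V)\to X_{k}$ is an isometry for $k\in\{0,1\}$. Choose $\eta_{j}\in C^{\infty}(\Gamma)$ with $\mathrm{supp}\,\eta_{j}\subset\Gamma_{j}$ and $\eta_{j}\equiv1$ on $\mathrm{supp}\,\chi_{j}$, and put $\widetilde{\eta}_{j}:=\eta_{j}\circ\alpha_{j}\in C^{\infty}_{0}(\mathbb{R}^{n})$. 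Define the retraction $R$ on tuples by gluing, $R((v_{j})_{j}):=\sum_{j=1}^{\varkappa}\Lambda_{j}(\widetilde{\eta}_{j}v_{j})$, where $\Lambda_{j}$ carries a compactly supported vector-valued distribution on $\mathbb{R}^{n}$ to the generalized section obtained by transferring it to $\Gamma_{j}$ through $\alpha_{j}^{-1}$, forming the section via $\beta_{j}^{-1}$, and extending by zero off $\Gamma_{j}$. Using the chart invariance of Sobolev spaces together with boundedness of multiplication by $\widetilde{\eta}_{j}\in C^{\infty}_{0}(\mathbb{R}^{n})$, each summand is bounded $H^{(s_{k})}(\mathbb{R}^{n})\to H^{(s_{k})}(\Gamma,V)$, so $R\colon X_{k}\to H^{(s_{k})}(\Gamma,V)$ is bounded for $k\in\{0,1\}$. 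Because $\sum_{j}\chi_{j}=1$ and $\eta_{j}\equiv1$ on $\mathrm{supp}\,\chi_{j}$, one has $\widetilde{\eta}_{j}\cdot((\chi_{j}u^{\times}_{j})\circ\alpha_{j})=(\chi_{j}u^{\times}_{j})\circ\alpha_{j}$, whose transfer under $\Lambda_{j}$ is $\chi_{j}u$; summing gives the crucial identity $RT=\mathrm{id}$ on $\mathcal{D}'(\Gamma,V)$, hence on each $H^{(s_{k})}(\Gamma,V)$ and on $H^{\varphi}(\Gamma,V)$.

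Finally I would interpolate. Since $\psi$ is an interpolation parameter (Proposition~\ref{p1}), its defining property applied to the pairs $([H^{(s_{0})}(\Gamma,V),H^{(s_{1})}(\Gamma,V)],[X_{0},X_{1}])$ and its reverse shows that $T$ and $R$ restrict to bounded operators $T\colon[H^{(s_{0})}(\Gamma,V),H^{(s_{1})}(\Gamma,V)]_{\psi}\to X_{\varphi}$ and $R\colon X_{\varphi}\to[H^{(s_{0})}(\Gamma,V),H^{(s_{1})}(\Gamma,V)]_{\psi}$. If $u\in[H^{(s_{0})}(\Gamma,V),H^{(s_{1})}(\Gamma,V)]_{\psi}$, then $Tu\in X_{\varphi}$, i.e. every $(\chi_{j}u^{\times}_{j})\circ\alpha_{j}\in(H^{\varphi}(\mathbb{R}^{n}))^{p}$, so $u\in H^{\varphi}(\Gamma,V)$ with $\|u\|_{H^{\varphi}(\Gamma,V)}=\|Tu\|_{X_{\varphi}}\le\|T\|\,\|u\|_{[H^{(s_{0})}(\Gamma,V),H^{(s_{1})}(\Gamma,V)]_{\psi}}$. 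Conversely, if $u\in H^{\varphi}(\Gamma,V)$, then $Tu\in X_{\varphi}$ and $u=RTu$ lies in $[H^{(s_{0})}(\Gamma,V),H^{(s_{1})}(\Gamma,V)]_{\psi}$ with $\|u\|_{[H^{(s_{0})}(\Gamma,V),H^{(s_{1})}(\Gamma,V)]_{\psi}}\le\|R\|\,\|Tu\|_{X_{\varphi}}=\|R\|\,\|u\|_{H^{\varphi}(\Gamma,V)}$. This establishes \eqref{3.1} with equivalence of norms.

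The formal interpolation steps (direct-sum additivity and retract compatibility) are routine; the genuine work sits in the second paragraph. The main obstacle is constructing $R$ at the level of generalized sections and proving simultaneously its boundedness on the Sobolev spaces and the identity $RT=\mathrm{id}$: the boundedness reduces to the invariance of $H^{(s)}$ under the chart diffeomorphisms $\alpha_{j}$ on compact pieces and under multiplication by $C^{\infty}_{0}$ functions, which is classical but must be carried out for vector-valued distributions and matched with the trivializations $\beta_{j}$, while the identity forces careful bookkeeping with supports, the partition of unity, and extension by zero. I emphasize that this argument does \emph{not} require the (not-yet-established) invariance of $H^{\varphi}(\Gamma,V)$ under a change of atlas: all operator bounds are proved only on the Sobolev spaces, and the bounds on the $H^{\varphi}$ level are produced automatically by the interpolation parameter $\psi$.
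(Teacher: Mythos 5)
Your proof is correct and takes essentially the same route as the paper: the same flattening operator $T$, the same sewing operator (your $R$ is precisely the paper's operator $K$, built from cut-offs $\eta_j$ and extension by zero), the identity $RT=\mathrm{id}$ on $\mathcal{D}'(\Gamma,V)$, and interpolation of both operators via Propositions \ref{p1} and \ref{p2}. The only organizational difference is that the paper proves the sewing operator bounded from $(H^{\varphi}(\mathbb{R}^n))^{p\varkappa}$ into $H^{\varphi}(\Gamma,V)$ directly (invoking multiplication and change-of-variables bounds on $H^{\varphi}(\mathbb{R}^n)$, themselves justified by interpolation) and uses this for the inclusion $\bigl[H^{(s_0)}(\Gamma,V),H^{(s_1)}(\Gamma,V)\bigr]_{\psi}\subset H^{\varphi}(\Gamma,V)$, whereas you obtain that inclusion from the interpolated bound on $T$ together with the definition of the norm of $H^{\varphi}(\Gamma,V)$ — the streamlining you flag in your final paragraph, which is sound.
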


In the case where $s_{0}<\sigma_{0}(\varphi)$ and $s_{1}>\sigma_{1}(\varphi)$, this Theorem~\ref{t4.1} is proved in \cite[Section~4]{Zinchenko17Collection3}.

\begin{theorem}\label{t4.2a}
For every $\varphi\in\mathrm{OR}$, the space $H^{\varphi}(\Gamma,V)$ is complete (i.e. Hilbert) and separable, and the set $C^\infty(\Gamma,V)$ is dense in $H^{\varphi}(\Gamma,V)$.
\end{theorem}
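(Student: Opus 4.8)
The plan is to deduce all three assertions of Theorem~\ref{t4.2a} from the interpolation formula~\eqref{3.1} established in Theorem~\ref{t4.1}, reducing everything to the corresponding facts for the inner product Sobolev spaces $H^{(s)}(\Gamma,V)$ and the general properties of quadratic interpolation recalled in Section~\ref{sec3}. First I would fix the given $\varphi\in\mathrm{OR}$, choose real numbers $s_0<s_1$ satisfying condition~\eqref{1.1} (which is always possible since $-\infty<\sigma_0(\varphi)\leq\sigma_1(\varphi)<\infty$), and define $\psi$ by~\eqref{3.30}, so that $\psi$ is an interpolation parameter and $H^{\varphi}(\Gamma,V)=[H^{(s_0)}(\Gamma,V),H^{(s_1)}(\Gamma,V)]_{\psi}$ with equivalence of norms.

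Completeness and separability would then follow essentially for free. I would recall from Section~\ref{sec3} that for any regular pair $X=[X_0,X_1]$ and any $\psi\in\mathcal{B}$ the interpolation space $X_\psi$ is, by construction, the domain of the (generally unbounded) positive-definite operator $\psi(J)$ equipped with the graph-type norm $\|u\|_{X_\psi}=\|\psi(J)u\|_{X_0}$, and that such a space is always Hilbert and separable. Hence it suffices to verify that $X=[H^{(s_0)}(\Gamma,V),H^{(s_1)}(\Gamma,V)]$ is a regular pair of separable Hilbert spaces: both spaces are separable Hilbert spaces by the classical theory of Sobolev spaces on vector bundles (see \cite[Chapter~IV, Section~1]{Wells}), and the embedding $H^{(s_1)}(\Gamma,V)\hookrightarrow H^{(s_0)}(\Gamma,V)$ is continuous and dense because $s_0<s_1$. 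With regularity in hand, $X_\psi$ is Hilbert and separable, and since $H^{\varphi}(\Gamma,V)=X_\psi$ up to equivalence of norms, the same holds for $H^{\varphi}(\Gamma,V)$. (Equivalence of norms preserves completeness and separability.)

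For density of $C^\infty(\Gamma,V)$ I would argue as follows. By~\eqref{3.18}, for an interpolation parameter $\psi$ one has the dense continuous embedding $X_1\hookrightarrow X_\psi$; concretely this gives that $H^{(s_1)}(\Gamma,V)$ is dense in $H^{\varphi}(\Gamma,V)$. On the other hand, $C^\infty(\Gamma,V)$ is dense in every Sobolev space $H^{(s)}(\Gamma,V)$, in particular in $H^{(s_1)}(\Gamma,V)$; this is again standard and may be obtained by transferring via the charts $\alpha_j$, the partition of unity $\{\chi_j\}$, and the trivializations $\beta_j$ to the density of $C_0^\infty$ (or of Schwartz functions) in $(H^{(s_1)}(\mathbb{R}^n))^p$. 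Composing the two density statements — $C^\infty(\Gamma,V)$ dense in $H^{(s_1)}(\Gamma,V)$, and $H^{(s_1)}(\Gamma,V)$ dense in $H^{\varphi}(\Gamma,V)$, the latter embedding being continuous — yields that $C^\infty(\Gamma,V)$ is dense in $H^{\varphi}(\Gamma,V)$.

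The main subtlety, rather than a genuine obstacle, is the logical placement of this theorem relative to Theorem~\ref{t4.1}: I am invoking~\eqref{3.1} to prove Theorem~\ref{t4.2a}, so I must ensure the proof of Theorem~\ref{t4.1} does not in turn rely on the completeness or density asserted here, to avoid circularity. Assuming Theorem~\ref{t4.1} is established independently (as the excerpt's ordering and the reference to \cite{Zinchenko17Collection3} suggest), the argument above is essentially a transfer of the three properties through the interpolation functor, and the only points requiring care are the standard but nontrivial facts about $H^{(s)}(\Gamma,V)$ — separability, the dense continuous Sobolev embedding for $s_0<s_1$, and density of smooth sections — all of which reduce, via the localization and trivialization apparatus of Section~\ref{sec2}, to the corresponding well-known properties of $(H^{(s)}(\mathbb{R}^n))^p$.
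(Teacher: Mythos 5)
Your proposal is correct and follows essentially the same route as the paper's own proof: the paper likewise invokes Theorem~\ref{t4.1} to identify $H^{\varphi}(\Gamma,V)$ with an interpolation space $[H^{(s_0)}(\Gamma,V),H^{(s_1)}(\Gamma,V)]_{\psi}$, obtains completeness and separability from the general properties of quadratic interpolation recalled in Section~\ref{sec3}, and gets density of $C^\infty(\Gamma,V)$ by composing the dense continuous embedding \eqref{3.18} with the known density of smooth sections in the Sobolev space $H^{(s_1)}(\Gamma,V)$. Your extra remarks (regularity of the Sobolev pair, invariance under equivalent norms, and the non-circularity of invoking Theorem~\ref{t4.1}) are all sound and consistent with the paper's logical ordering.
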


As is shown in \cite[Section~4]{Zinchenko17Collection3}, we can infer the following result from Theorem~\ref{t4.1}:

\begin{theorem}\label{t4.2b}
The space $H^{\varphi}(\Gamma, V)$ does not depend up to equivalence of norms on the choice of the atlas $\{\alpha_j\}$ and partition of unity $\{\chi_j\}$ on $\Gamma$ and on the choice of the local trivializations $\{\beta_j\}$ of the vector bundle $\pi: \Gamma \rightarrow V$.
\end{theorem}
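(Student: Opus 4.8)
The plan is to reduce the assertion entirely to the classical Sobolev case by means of the interpolation formula of Theorem~\ref{t4.1}. Fix $\varphi\in\mathrm{OR}$ and choose real numbers $s_0<s_1$ satisfying \eqref{1.1}; let $\psi$ be the interpolation parameter given by formula~\eqref{3.30}. Suppose we are given two systems of data, $(\{\alpha_j\},\{\chi_j\},\{\beta_j\})$ and $(\{\alpha'_j\},\{\chi'_j\},\{\beta'_j\})$ (possibly with different numbers of local charts). Denote by $H^{\varphi}_{(1)}(\Gamma,V)$ and $H^{\varphi}_{(2)}(\Gamma,V)$, and likewise by $H^{(s)}_{(1)}(\Gamma,V)$ and $H^{(s)}_{(2)}(\Gamma,V)$ for each real $s$, the spaces produced by these two systems.

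First I would invoke the classical fact that the inner product Sobolev spaces of real order on a vector bundle are independent, up to equivalence of norms, of the chosen atlas, partition of unity, and trivialization (see, e.g., \cite[Chapter~IV, Section~1]{Wells}). Concretely, for each fixed $s\in\mathbb{R}$ the sets $H^{(s)}_{(1)}(\Gamma,V)$ and $H^{(s)}_{(2)}(\Gamma,V)$ coincide as subspaces of $\mathcal{D}'(\Gamma,V)$, and the two norms on this common set are equivalent. Consequently, the identity mapping $I$ of $\mathcal{D}'(\Gamma,V)$ restricts to bounded operators $I\colon H^{(s_0)}_{(1)}(\Gamma,V)\to H^{(s_0)}_{(2)}(\Gamma,V)$ and $I\colon H^{(s_1)}_{(1)}(\Gamma,V)\to H^{(s_1)}_{(2)}(\Gamma,V)$.

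Next I would form the two regular pairs $X:=\bigl[H^{(s_0)}_{(1)}(\Gamma,V),H^{(s_1)}_{(1)}(\Gamma,V)\bigr]$ and $Y:=\bigl[H^{(s_0)}_{(2)}(\Gamma,V),H^{(s_1)}_{(2)}(\Gamma,V)\bigr]$. Regularity (a continuous and dense embedding of the second component into the first) follows from the standard embedding $H^{(s_1)}(\Gamma,V)\hookrightarrow H^{(s_0)}(\Gamma,V)$ for $s_0<s_1$ together with the density of $C^{\infty}(\Gamma,V)$ guaranteed by Theorem~\ref{t4.2a}. Since $\psi$ is an interpolation parameter and $I$ is bounded on both endpoint pairs, the interpolation property yields that $I\colon X_{\psi}\to Y_{\psi}$ is bounded. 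By Theorem~\ref{t4.1} one has $X_{\psi}=H^{\varphi}_{(1)}(\Gamma,V)$ and $Y_{\psi}=H^{\varphi}_{(2)}(\Gamma,V)$, each with equivalence of norms; hence $I\colon H^{\varphi}_{(1)}(\Gamma,V)\to H^{\varphi}_{(2)}(\Gamma,V)$ is bounded. Interchanging the roles of the two systems of data gives the boundedness of the inverse, so the two spaces coincide as sets and their norms are equivalent, which is the assertion.

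The only genuinely delicate point is the reduction to the classical Sobolev case: one must know \emph{a priori} that the endpoint spaces $H^{(s_i)}_{(1)}(\Gamma,V)$ and $H^{(s_i)}_{(2)}(\Gamma,V)$ are the same set with equivalent norms, for otherwise the identity operator would not be defined between them and the interpolation machinery could not even be set in motion. This invariance for real-order Sobolev spaces is classical and ultimately rests on the facts that a change of local chart (a diffeomorphism) and a change of local trivialization (a smooth $\mathrm{GL}_{p}(\mathbb{C})$-valued transition map, together with multiplication by the smooth cut-off functions) induce bounded operators on the Euclidean spaces $(H^{(s)}(\mathbb{R}^{n}))^{p}$. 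Once this is granted, the remainder of the argument is a direct, essentially formal, application of the interpolation property of $\psi$ and of Theorem~\ref{t4.1}.
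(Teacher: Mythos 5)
Your proposal is correct and takes essentially the same route as the paper: both reduce the statement to the classical chart/trivialization-invariance of the Sobolev spaces $H^{(s)}(\Gamma,V)$ (Wells) and then transfer that invariance to $H^{\varphi}(\Gamma,V)$ by applying the interpolation property of the parameter $\psi$ to the identity mapping and invoking Theorem~\ref{t4.1} for both systems of data. The paper's proof is merely a compressed version of your argument, with the regularity of the endpoint pairs and the symmetric interchange of the two triplets left implicit.
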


The extended Sobolev scale is partially ordered with the relation "$\hookrightarrow$" of continuous embedding.

\begin{theorem}\label{t4.3}
Let $\varphi_1, \varphi_2 \in\mathrm{OR}$. The embedding $H^{\varphi_2}(\Gamma, V)\hookrightarrow H^{\varphi_1}(\Gamma, V)$ holds true if and only if the function $\varphi_1/\varphi_2$ is bounded in a neighbourhood of infinity. This embedding is continuous and dense. It is compact  if and only if the function $\varphi_1(t)/\varphi_2(t)\rightarrow0$ as $t\rightarrow\infty$.
\end{theorem}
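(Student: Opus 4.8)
The plan is to reduce each assertion to the corresponding statement for the model spaces $(H^{\varphi}(\mathbb{R}^{n}))^{p}$ by means of the defining identity $\|u\|_{H^{\varphi}(\Gamma,V)}^{2}=\sum_{j=1}^{\varkappa}\|(\chi_{j}u^{\times}_{j})\circ\alpha_{j}\|^{2}_{(H^{\varphi}(\mathbb{R}^{n}))^{p}}$, and then to invoke two elementary facts about the class $\mathrm{OR}$. First, the Seneta representation recalled after \eqref{f3} shows that every $\varphi\in\mathrm{OR}$ is bounded away from $0$ and from $\infty$ on each compact subinterval of $[1,\infty)$; hence ``bounded in a neighbourhood of infinity'' may be replaced throughout by ``bounded on $[1,\infty)$'', and likewise the condition $\varphi_{1}/\varphi_{2}\to0$ concerns only the behaviour at infinity. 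Second, $\mathrm{OR}$ is a group under pointwise multiplication, so $\chi:=\varphi_{1}/\varphi_{2}\in\mathrm{OR}$ and inherits the doubling estimate \eqref{1.1}.

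For the continuous embedding I would first settle $\mathbb{R}^{n}$. If $M:=\sup_{t\geq1}\varphi_{1}(t)/\varphi_{2}(t)<\infty$, then comparing the Fourier--integral norms pointwise in $\xi$ gives $\|w\|_{H^{\varphi_{1}}(\mathbb{R}^{n})}\leq M\|w\|_{H^{\varphi_{2}}(\mathbb{R}^{n})}$, so that $(H^{\varphi_{2}}(\mathbb{R}^{n}))^{p}\hookrightarrow(H^{\varphi_{1}}(\mathbb{R}^{n}))^{p}$; summing over $j$ in the defining identity yields $H^{\varphi_{2}}(\Gamma,V)\hookrightarrow H^{\varphi_{1}}(\Gamma,V)$ with the same constant $M$. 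Conversely, a bundle embedding is automatically continuous by the closed graph theorem, since both spaces embed continuously into $\mathcal{D}'(\Gamma,V)$; to recover boundedness of $\chi$ I would transplant into a single chart the modulated bumps $f_{t}(x)=e^{it\,\omega\cdot x}\theta(x)$, where $\theta\in C_{0}^{\infty}(\mathbb{R}^{n})$ is fixed and $\omega$ is a unit vector. Because \eqref{1.1} gives $\varphi_{i}(\langle\xi\rangle)\asymp\varphi_{i}(t)$, with constants independent of $t$, for $\xi$ in the (bounded) effective support of $\widehat{f_{t}}$, the ratio of the $H^{\varphi_{1}}$- and $H^{\varphi_{2}}$-norms of $f_{t}$ is comparable to $\varphi_{1}(t)/\varphi_{2}(t)$ as $t\to\infty$, and the embedding inequality forces this quantity to remain bounded. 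Density is then immediate: by Theorem~\ref{t4.2a} the set $C^{\infty}(\Gamma,V)$ is dense in $H^{\varphi_{1}}(\Gamma,V)$ and is contained in $H^{\varphi_{2}}(\Gamma,V)$, so the latter is dense in the former.

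For compactness I would localize once more. Assume $\chi(t)\to0$ (which already gives the continuous embedding). The decisive step is a local lemma on $\mathbb{R}^{n}$: for a fixed compact set $K$, the family $\{f\in H^{\varphi_{2}}(\mathbb{R}^{n}):\mathrm{supp}\,f\subset K,\ \|f\|_{H^{\varphi_{2}}(\mathbb{R}^{n})}\leq1\}$ is precompact in $H^{\varphi_{1}}(\mathbb{R}^{n})$. To prove it, note that by Paley--Wiener the transforms $\widehat{f}$ are equibounded together with their derivatives on every ball, so by the Arzel\`a--Ascoli theorem a subsequence converges uniformly on each $\{|\xi|\leq R\}$; splitting $\int\varphi_{1}^{2}(\langle\xi\rangle)|\widehat{f_{k}}-\widehat{f_{l}}|^{2}\,d\xi$ at $|\xi|=R$ and using $\varphi_{1}^{2}=\chi^{2}\varphi_{2}^{2}\leq(\sup_{\langle\xi\rangle>R}\chi)^{2}\,\varphi_{2}^{2}$ on the outer region shows the subsequence is Cauchy in $H^{\varphi_{1}}(\mathbb{R}^{n})$. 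Applying this lemma to the local representations $(\chi_{j}u^{\times}_{j})\circ\alpha_{j}$ of a bounded sequence in $H^{\varphi_{2}}(\Gamma,V)$---which are supported in fixed compact sets and bounded in $(H^{\varphi_{2}}(\mathbb{R}^{n}))^{p}$---and diagonalizing over $j$ produces a subsequence convergent in $H^{\varphi_{1}}(\Gamma,V)$. For the necessity of $\chi(t)\to0$, if $\chi(t_{k})\geq\delta>0$ along some $t_{k}\to\infty$ I would transplant into a single chart a sequence of modulated bumps with pairwise disjoint frequency supports near $\langle\xi\rangle\approx t_{k}$, normalized in $H^{\varphi_{2}}$; such a sequence remains bounded below in $H^{\varphi_{1}}$-distance and so has no convergent subsequence, contradicting compactness.

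The routine parts are the termwise norm comparison and the density step. The genuine obstacle is the local compactness lemma: one cannot in general interpose a single Sobolev space $H^{(s)}(\mathbb{R}^{n})$ with $H^{\varphi_{2}}(\mathbb{R}^{n})\hookrightarrow H^{(s)}(\mathbb{R}^{n})$ compact and $H^{(s)}(\mathbb{R}^{n})\hookrightarrow H^{\varphi_{1}}(\mathbb{R}^{n})$ continuous---this fails already for $\varphi_{2}(t)=t^{s}(\log t)^{2}$ and $\varphi_{1}(t)=t^{s}\log t$---so the usual factorization through one Rellich embedding is unavailable. The Paley--Wiener and frequency-splitting argument above is what replaces it, and it is exactly here that the compactness of $\Gamma$ (encoded in the compact supports of the local representations) combines with the frequency decay $\chi(t)\to0$ to produce compactness.
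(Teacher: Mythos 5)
Your proposal is correct in substance but follows a genuinely different route from the paper's. The paper obtains all three positive statements (continuity, density, compactness) from interpolation: by Theorem~\ref{t4.1}, $H^{\varphi_j}(\Gamma,V)=[H^{(s_0)}(\Gamma,V),H^{(s_1)}(\Gamma,V)]_{\psi_j}$, and Proposition~\ref{p6} converts boundedness (resp.\ decay) of $\psi_1/\psi_2$ into a dense continuous (resp.\ compact, via the Rellich property of the Sobolev pair) embedding; for the two necessity statements it transfers the problem to $\mathbb{R}^n$ using the sewing operator $K$ from the proof of Theorem~\ref{t4.1}, choosing the set $U\subset\Gamma_1$ disjoint from the other chart domains so that the transplantation is an exact isometry, and then cites H\"ormander's Theorems 2.2.2 and 2.2.3 of \cite{Hermander63}. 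You instead argue directly on the Fourier side: pointwise comparison of weights for continuity, a Paley--Wiener/Arzel\`a--Ascoli/frequency-splitting lemma for compactness (in effect reproving H\"ormander's Theorem 2.2.3 in this Hilbert case), and modulated bumps for both necessity directions (in effect reproving Theorem 2.2.2). The paper's route buys brevity, since the hard analysis is packaged in Proposition~\ref{p6} and in H\"ormander's theorems; your route buys self-containedness, and your closing observation is a genuine insight: no single Sobolev space can be interposed between $H^{\varphi_2}$ and $H^{\varphi_1}$ in general, so the naive factorization through one Rellich embedding is unavailable --- the paper circumvents this through the compactness assertion of Proposition~\ref{p6}, you through the frequency splitting. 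Two spots in your argument need tightening: (i) compactly supported bumps have Schwartz, not compactly supported, Fourier transforms, so every claim about ``effective'' or ``disjoint'' frequency supports requires a tail estimate, which does go through because $\widehat{\theta}$ decays rapidly while \eqref{1.1} bounds $\varphi_i$ polynomially; (ii) transplanting bumps into a chart and asserting that bundle norms are comparable to local norms requires either the paper's trick of choosing $U$ disjoint from all other chart domains (whence the local representations over the other charts vanish and $\chi_1\equiv1$ on $U$), or the boundedness of multiplication by $C^\infty_0$-functions and of $C^\infty$-changes of variables on $H^\varphi(\mathbb{R}^n)$ --- facts which the paper itself derives from Proposition~\ref{p1} inside the proof of Theorem~\ref{t4.1}, so they are legitimately at your disposal but should be invoked explicitly.
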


The extended Sobolev scale is closed with respect to the quadratic interpolation with function parameter. Namely, the following result is true:

\begin{theorem}\label{t4.4}
Let $\varphi_1, \varphi_2 \in\mathrm{OR}$, and let $\psi$ be an interpolation parameter. Suppose that the function $\varphi_{1}/\varphi_{2}$ is bounded in a neighbourhood of infinity, and put
\begin{equation*}
\varphi(t) :=\varphi_1(t)\,\psi\Bigl(\frac{\varphi_2(t)}{\varphi_1(t)}\Bigr) \quad\mbox{whenever}\quad t\geq1.
\end{equation*}
Then $\varphi\in\mathrm{OR}$, and
\begin{equation}\label{3.111111}
\bigl[H^{\varphi_1}(\Gamma, V),H^{\varphi_2}(\Gamma, V)\bigr]_{\psi}=H^{\varphi}(\Gamma, V)
\end{equation}
with equivalence of norms.
\end{theorem}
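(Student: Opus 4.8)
The plan is to first verify that $\varphi\in\mathrm{OR}$, and then to reduce the identity~\eqref{3.111111} to the Euclidean-type interpolation formula of Theorem~\ref{t4.1} by means of a composition (reiteration) property of quadratic interpolation.

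For the membership $\varphi\in\mathrm{OR}$: since $\psi$ is an interpolation parameter it is pseudoconcave in a neighbourhood of infinity, hence itself $\mathrm{OR}$-varying there. Put $\rho:=\varphi_2/\varphi_1$. The class $\mathrm{OR}$ is closed under quotients (this follows at once by dividing the two-sided estimates~\eqref{f3} written for $\varphi_2$ and $\varphi_1$), so $\rho\in\mathrm{OR}$; moreover the hypothesis that $\varphi_1/\varphi_2$ is bounded near infinity means that $\rho$ is bounded there below by a positive constant. I would then check~\eqref{f3} for $\varphi=\varphi_1\cdot(\psi\circ\rho)$ directly: writing $\varphi(\lambda t)/\varphi(t)=\bigl(\varphi_1(\lambda t)/\varphi_1(t)\bigr)\cdot\bigl(\psi(\rho(\lambda t))/\psi(\rho(t))\bigr)$, the first factor is two-sidedly bounded because $\varphi_1\in\mathrm{OR}$, while $\rho(\lambda t)=\mu\,\rho(t)$ with $\mu$ confined to a fixed compact subinterval of $(0,\infty)$ (again because $\rho\in\mathrm{OR}$), and, since $\rho(t)$ stays bounded below, both arguments of $\psi$ remain in a half-line $[r,\infty)$ on which $\psi$ is bounded between positive constants on compacta and $\mathrm{OR}$-varying at infinity; hence the second factor is two-sidedly bounded as well. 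Thus $\varphi\in\mathrm{OR}$.

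For the interpolation identity I would choose real numbers $s_0<s_1$ satisfying condition~\eqref{1.1} simultaneously for $\varphi_1$, $\varphi_2$ and $\varphi$ (possible since all three lie in $\mathrm{OR}$: take $s_0$ below, and $s_1$ above, the Matuszewska indices of all three functions). By Theorem~\ref{t4.1},
\begin{equation*}
H^{\varphi_j}(\Gamma,V)=\bigl[H^{(s_0)}(\Gamma,V),H^{(s_1)}(\Gamma,V)\bigr]_{\psi_j}\quad(j=1,2),\qquad H^{\varphi}(\Gamma,V)=\bigl[H^{(s_0)}(\Gamma,V),H^{(s_1)}(\Gamma,V)\bigr]_{\psi_\varphi},
\end{equation*}
where $\psi_1$, $\psi_2$ and $\psi_\varphi$ are obtained from $\varphi_1$, $\varphi_2$ and $\varphi$ by formula~\eqref{3.30}. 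Abbreviating $X_0:=H^{(s_0)}(\Gamma,V)$ and $X_1:=H^{(s_1)}(\Gamma,V)$, and denoting by $J$ the generating operator of the regular pair $[X_0,X_1]$, I would note that $[H^{\varphi_1}(\Gamma,V),H^{\varphi_2}(\Gamma,V)]=[X_{\psi_1},X_{\psi_2}]$ is itself a regular pair — the dense continuous embedding $H^{\varphi_2}\hookrightarrow H^{\varphi_1}$ is supplied by Theorem~\ref{t4.3} because $\varphi_1/\varphi_2$ is bounded near infinity — and then establish the composition formula
\begin{equation*}
[X_{\psi_1},X_{\psi_2}]_{\psi}=X_{\omega},\qquad \omega:=\psi_1\cdot\bigl(\psi\circ(\psi_2/\psi_1)\bigr),
\end{equation*}
with equality of norms. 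This is the heart of the argument, and it comes from the spectral calculus of $J$: the operator $\psi_1(J)$ is a unitary isomorphism of $X_{\psi_1}$ onto $X_0$ carrying $X_{\psi_2}$ isometrically onto $X_0$ renormed by $\|(\psi_2/\psi_1)(J)\,\cdot\,\|_{X_0}$, so that $\eta:=\psi_2/\psi_1$ (which lies in $\mathcal{B}$, as $\psi_1/\psi_2$ is bounded near infinity) is the symbol of the generating operator of the transformed pair; applying the definition of $[\,\cdot\,,\,\cdot\,]_{\psi}$ together with the multiplicativity and the composition rule of the Borel functional calculus, $\psi_1(J)\,\psi(\eta(J))=\bigl(\psi_1\cdot(\psi\circ\eta)\bigr)(J)=\omega(J)$, which yields the displayed identity after transporting the interpolation back along the isomorphism $\psi_1(J)$.

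Finally I would identify $\omega$ with $\psi_\varphi$. Setting $\delta:=s_1-s_0$ and substituting $\tau:=t^{1/\delta}$ in~\eqref{3.30} gives $\psi_2(t)/\psi_1(t)=\varphi_2(\tau)/\varphi_1(\tau)$, whence for $t\ge1$
\begin{equation*}
\omega(t)=t^{-s_0/\delta}\,\varphi_1(\tau)\,\psi\!\Bigl(\frac{\varphi_2(\tau)}{\varphi_1(\tau)}\Bigr)=t^{-s_0/\delta}\,\varphi(\tau)=\psi_\varphi(t)
\end{equation*}
by the very definition of $\varphi$, the case $0<t<1$ being immediate. Hence $[X_{\psi_1},X_{\psi_2}]_{\psi}=X_{\psi_\varphi}=H^{\varphi}(\Gamma,V)$ by Theorem~\ref{t4.1}, which proves~\eqref{3.111111}. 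Since Theorem~\ref{t4.1} provides only equivalence (not equality) of norms on the bundle, the conclusion holds with equivalence of norms, as stated. The main obstacle is the composition formula for $[X_{\psi_1},X_{\psi_2}]_{\psi}$: one must justify the unbounded functional-calculus manipulations, namely the domains of $\psi_1(J)$, $\eta(J)$ and $\psi(\eta(J))$, the claim that $\eta(J)$ generates the transformed pair, and the fact that the resulting right-hand space is well defined; all of these are verified by the Spectral Theorem together with the pseudoconcavity criterion recalled in Section~\ref{sec3}.
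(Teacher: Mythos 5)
Your proposal is correct and takes essentially the same route as the paper: choose $s_0<s_1$ serving $\varphi_1$, $\varphi_2$ and $\varphi$ simultaneously, reduce everything to the Sobolev pair via Theorem~\ref{t4.1}, apply the reiteration formula $[X_{\psi_1},X_{\psi_2}]_{\psi}=X_{\omega}$ with $\omega=\psi_1\cdot\bigl(\psi\circ(\psi_2/\psi_1)\bigr)$, and check that $\omega$ is exactly the parameter that formula~\eqref{3.30} assigns to $\varphi$. The only difference is one of self-containedness: the reiteration identity you establish by spectral calculus is precisely Proposition~\ref{p7} (quoted by the paper from the literature), and the membership $\varphi\in\mathrm{OR}$ that you verify directly is likewise handled there by citation, so both of your longer arguments replace references rather than change the approach.
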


The extended Sobolev scale consists of all Hilbert spaces that are interpolation spaces between the Sobolev spaces
$H^{(s_0)}(\Gamma,V)$ and $H^{(s_1)}(\Gamma,V)$ whenever $-\infty<s_0<s_1<\infty$.
In this connection, let us recall the definition of an interpolation space between Hilbert spaces $X_0$ and $X_1$. Suppose that
the continuous embedding $X_1\hookrightarrow X_0$ holds.

A Hilbert space $H$ is called an interpolation space between the spaces $X_0$ and $X_1$ if the following two conditions are satisfied:
\begin{enumerate}
  \item[(i)] the continuous embeddings $X_1\hookrightarrow H \hookrightarrow X_0$ are fulfilled;
  \item[(ii)]  for every linear operator $T$ given on $X_0$, the following implication holds: if the restriction of $T$ to $X_j$ is a bounded operator on $X_j$ for each $j\in\{0,1\}$, then the restriction of $T$ to $H$ is a bounded operator on $H$.
\end{enumerate}

Since properties (i) and (ii) are invariant relative to the choice of an equivalent norm on $H$, it is naturally to describe   interpolation spaces up to equivalence of norms.

\begin{theorem}\label{t4.5}
Let $s_0, s_1 \in\mathbb{R}$ and $s_0<s_1$.
A Hilbert space $H$ is an interpolation space between the Sobolev spaces $H^{(s_0)}(\Gamma,V)$ and $H^{(s_1)}(\Gamma,V)$ if and only if $H = H^\varphi(\Gamma,V)$ up to equivalence of norms for a certain parameter $\varphi\in\mathrm{OR}$ that satisfies condition \eqref{1.1}.
\end{theorem}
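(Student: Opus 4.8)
The plan is to prove the two implications separately, in both cases reducing to the identification of $H^{\varphi}(\Gamma,V)$ with a quadratic interpolation space furnished by Theorem~\ref{t4.1}, and supplying for the harder direction the abstract description of all Hilbert interpolation spaces of a regular couple. Throughout I abbreviate $X_0:=H^{(s_0)}(\Gamma,V)$ and $X_1:=H^{(s_1)}(\Gamma,V)$. First I would record that $X:=[X_0,X_1]$ is a regular couple: the embedding $X_1\hookrightarrow X_0$ is continuous and dense by Theorem~\ref{t4.3} applied to $\varphi_1(t)=t^{s_0}$ and $\varphi_2(t)=t^{s_1}$, since $\varphi_1/\varphi_2=t^{s_0-s_1}\to0$.

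\emph{Sufficiency.} Suppose $H=H^{\varphi}(\Gamma,V)$ up to equivalence of norms with $\varphi\in\mathrm{OR}$ satisfying \eqref{1.1} for the given $s_0<s_1$, and let $\psi$ be the interpolation parameter defined by \eqref{3.30}. By Theorem~\ref{t4.1} we have $H^{\varphi}(\Gamma,V)=X_{\psi}$ with equivalence of norms. Then condition~(i) in the definition of an interpolation space is exactly the chain of dense continuous embeddings \eqref{3.18}, namely $X_1\hookrightarrow X_{\psi}\hookrightarrow X_0$, while condition~(ii) is precisely the defining property of the interpolation parameter $\psi$ (boundedness on both endpoints forces boundedness on $X_{\psi}$). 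Since (i) and (ii) are invariant under passage to an equivalent norm, $H$ is an interpolation space between $X_0$ and $X_1$.

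\emph{Necessity.} Suppose $H$ is an interpolation space between $X_0$ and $X_1$. Since $X$ is a regular couple of separable Hilbert spaces, I would invoke the description of all Hilbert interpolation spaces of such a couple \cite{MikhailetsMurach14}: there is an interpolation parameter $\psi\in\mathcal{B}$ with $H=X_{\psi}$ up to equivalence of norms. It then remains to realize $X_{\psi}$ as some $H^{\varphi}(\Gamma,V)$. Inverting \eqref{3.30}, I define $\varphi(t):=t^{s_0}\,\psi\bigl(t^{\,s_1-s_0}\bigr)$ for $t\geq1$. A direct substitution shows that the parameter produced from this $\varphi$ through \eqref{3.30} coincides with $\psi$ on $[1,\infty)$; as $\psi$ and this parameter are each bounded above and below by positive constants on compacta and the spectrum of the generating operator of $X$ is bounded away from zero, the two define the same space up to equivalence of norms, so Theorem~\ref{t4.1} will give $X_{\psi}=H^{\varphi}(\Gamma,V)$ once $\varphi\in\mathrm{OR}$ is established.

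\emph{The key verification}, and the main obstacle, is to show that this $\varphi$ lies in $\mathrm{OR}$ and in fact satisfies \eqref{1.1} with the prescribed exponents. Here I would use that an interpolation parameter is pseudoconcave near infinity: there is a positive concave $\psi_1$ on some $(b,\infty)$ with $\psi/\psi_1$ and $\psi_1/\psi$ bounded. A positive concave function on a half-line is non-decreasing and has non-increasing ratio $\psi_1(t)/t$, so $1\le\psi_1(\lambda t)/\psi_1(t)\le\lambda$ for $\lambda\ge1$ and $t>b$; hence $c^{-1}\le\psi(\lambda t)/\psi(t)\le c\lambda$ for all large $t$ and $\lambda\ge1$. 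Writing $d:=s_1-s_0>0$ and using $(\lambda t)^{d}=\lambda^{d}t^{d}$ with $\lambda^{d}\ge1$, I obtain
\begin{equation*}
c^{-1}\lambda^{s_0}\le\frac{\varphi(\lambda t)}{\varphi(t)}
=\lambda^{s_0}\,\frac{\psi(\lambda^{d}t^{d})}{\psi(t^{d})}
\le c\,\lambda^{s_0+d}=c\,\lambda^{s_1}
\end{equation*}
for all sufficiently large $t$ and all $\lambda\ge1$. Enlarging $c$ to absorb the behaviour on a compact $t$-interval (legitimate since $\psi\in\mathcal{B}$ is bounded above and below there) yields \eqref{1.1} for all $t\ge1$, so that $\varphi\in\mathrm{OR}$ with the required indices. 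Combined with the identification of the previous paragraph this completes the argument. The only genuinely deep ingredient is the cited abstract characterization of Hilbert interpolation spaces; everything else is the index bookkeeping sketched above.
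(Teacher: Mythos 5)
Your proposal is correct in substance and rests on the same skeleton as the paper's proof: both reduce the statement via Proposition~\ref{p4} (Ovchinnikov's description of Hilbert interpolation spaces of a regular couple) to spaces of the form $[X_0,X_1]_{\psi}$ with $\psi$ an interpolation parameter, and both identify that space with $H^{\varphi}(\Gamma,V)$ for $\varphi(t)=t^{s_0}\psi(t^{s_1-s_0})$. The difference lies in how the identification and the index condition are handled. The paper applies Theorem~\ref{t4.4} with $\varphi_j(t)\equiv t^{s_j}$, which delivers in one stroke both $\varphi\in\mathrm{OR}$ and the equality $[X_0,X_1]_{\psi}=H^{\varphi}(\Gamma,V)$, and then cites \cite[Theorem~4.2]{MikhailetsMurach15ResMath1} for the equivalence ``$\varphi$ satisfies \eqref{1.1} $\Leftrightarrow$ $\psi$ is an interpolation parameter.'' You instead invert \eqref{3.30} and apply Theorem~\ref{t4.1}, reconciling the discrepancy of the two parameters on $(0,1)$ by a norm-equivalence argument (which is legitimate: the generating operator of the couple does have spectrum separated from $0$), and you prove the half of the index equivalence you need by hand from Peetre's pseudoconcavity criterion. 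So your necessity argument is self-contained where the paper leans on a citation, at the cost of more bookkeeping; your sufficiency direction is a direct verification from the definitions and Theorem~\ref{t4.1}, and is unobjectionable.

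One intermediate claim in your key verification is false as stated: a positive concave function $\psi_1$ on $(b,\infty)$ with $b>0$ need not have non-increasing ratio $\psi_1(t)/t$, and the inequality $\psi_1(\lambda t)\le\lambda\psi_1(t)$ can fail; take $\psi_1(t)=t-b$, for which $\psi_1(\lambda t)=\lambda t-b>\lambda(t-b)$ when $\lambda>1$. The repair is a chord-slope estimate: since the right derivative of a concave function at $t$ is at most the slope of the chord from $b^{+}$ to $t$, for $t\ge 2b$ and $\lambda\ge1$ one has
\[
\psi_1(\lambda t)\le\psi_1(t)+\psi_1'(t^{+})(\lambda-1)t
\le\psi_1(t)\Bigl(1+\frac{(\lambda-1)t}{t-b}\Bigr)\le 2\lambda\,\psi_1(t).
\]
Thus the two-sided bound $c^{-1}\le\psi(\lambda t)/\psi(t)\le c\lambda$ that you actually use downstream still holds for all sufficiently large $t$ and all $\lambda\ge1$, after enlarging $c$; since that constant is absorbed into \eqref{1.1} anyway, the slip is harmless and the rest of your argument goes through unchanged.
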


A Hilbert space $H$ is called an interpolation space with respect to the Sobolev scale $\{H^{(s)}(\Gamma,V):s\in\mathbb{R}\}$ if $H$ is interpolation space between certain spaces belonging to this scale. Owing to Theorem \ref{t4.5}, we have

\begin{corollary}\label{c1}
A Hilbert space $H$ is an interpolation space with respect to the Sobolev scale $\{H^{{s}}(\Gamma,V): s\in \mathbb{R})\}$ if and only if  $H=H^\varphi(\Gamma,V)$ up to equivalence of norms for some $\varphi\in\mathrm{OR}$.
\end{corollary}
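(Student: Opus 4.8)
The plan is to derive the corollary directly from Theorem~\ref{t4.5} by eliminating the dependence on the particular pair of Sobolev orders. Recall that, by definition, $H$ is an interpolation space with respect to the Sobolev scale precisely when there exist real numbers $s_0<s_1$ for which $H$ is an interpolation space between $H^{(s_0)}(\Gamma,V)$ and $H^{(s_1)}(\Gamma,V)$.

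For the necessity, I would start from such a pair $s_0<s_1$ furnished by the definition and apply Theorem~\ref{t4.5}. It yields a parameter $\varphi\in\mathrm{OR}$ (satisfying condition~\eqref{1.1} relative to these $s_0,s_1$) with $H=H^\varphi(\Gamma,V)$ up to equivalence of norms. Since the theorem already delivers $\varphi\in\mathrm{OR}$, this direction is immediate: the membership $\varphi\in\mathrm{OR}$ is exactly what the corollary requires, and no information about the specific $s_0,s_1$ needs to be retained.

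For the sufficiency, suppose $H=H^\varphi(\Gamma,V)$ up to equivalence of norms with $\varphi\in\mathrm{OR}$. The key point is to produce an admissible pair of Sobolev orders to which Theorem~\ref{t4.5} applies. Because every $\varphi\in\mathrm{OR}$ has finite Matuszewska indices, $-\infty<\sigma_0(\varphi)\le\sigma_1(\varphi)<\infty$, I can choose real numbers $s_0<\sigma_0(\varphi)$ and $s_1>\sigma_1(\varphi)$, so that in particular $s_0<s_1$. By the Remark following Proposition~\ref{p1}, the strict inequalities $s_0<\sigma_0(\varphi)$ and $\sigma_1(\varphi)<s_1$ guarantee that conditions~(i) and~(ii) there hold, hence $\varphi$ satisfies condition~\eqref{1.1} for this pair $s_0,s_1$. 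Theorem~\ref{t4.5} then shows that $H^\varphi(\Gamma,V)$, and therefore $H$, is an interpolation space between $H^{(s_0)}(\Gamma,V)$ and $H^{(s_1)}(\Gamma,V)$, which are members of the Sobolev scale; thus $H$ is an interpolation space with respect to the scale.

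The argument is essentially a bookkeeping reduction to Theorem~\ref{t4.5}, so no serious difficulty arises. The only point deserving care is the sufficiency step, where one must verify that an arbitrary $\varphi\in\mathrm{OR}$ does admit some pair $s_0<s_1$ for which condition~\eqref{1.1} holds; this is secured by the finiteness of the Matuszewska indices, which is part of the basic theory of $\mathrm{OR}$-varying functions recalled in Section~\ref{sec2}.
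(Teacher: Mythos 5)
Your proof is correct and takes exactly the route the paper intends: the paper states Corollary~\ref{c1} as an immediate consequence of Theorem~\ref{t4.5}, and your write-up supplies precisely the implicit bookkeeping — necessity is Theorem~\ref{t4.5} applied to the pair furnished by the definition, and sufficiency follows by choosing $s_0<\sigma_0(\varphi)\leq\sigma_1(\varphi)<s_1$ so that condition~\eqref{1.1} holds and Theorem~\ref{t4.5} applies.
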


Suppose now that the vector bundle $\pi: V \rightarrow\Gamma$ is Hermitian. Thus, for every $x\in\Gamma$, a certain inner product $\langle \cdot, \cdot \rangle_{x}$ is defined in the fiber $\pi^{-1}(x)$ so that the scalar function $\Gamma\ni x\mapsto\langle u(x), v(x)\rangle_{x}$ is infinitely smooth on $\Gamma$ for arbitrary sections $u,v\in C^\infty(\Gamma, V)$. Using the $C^\infty$-density $dx$ on $\Gamma$, we define the inner product of these sections by the formula
\begin{equation}\label{4.332}
\langle u,v\rangle_{\Gamma, V} := \int\limits_{\Gamma} \langle u(x),v(x)\rangle_{x}\,dx.
\end{equation}

\begin{theorem}\label{t4.6}
For every $\varphi\in\mathrm{OR}$, the spaces $H^{\varphi}(\Gamma, V)$ and $H^{1/\varphi}(\Gamma, V)$ are mutually dual (up to equivalence of norms)  with respect to the sesquilinear form \eqref{4.332}.
\end{theorem}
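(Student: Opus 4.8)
The plan is to reduce the statement to the spectral theorem by realizing the whole scale \eqref{1.36} as an extended Hilbert scale generated by a single operator. First note that $1/\varphi\in\mathrm{OR}$ whenever $\varphi\in\mathrm{OR}$ (condition \eqref{f3} for $1/\varphi$ is condition \eqref{f3} for $\varphi$ with the two bounds interchanged), so the space $H^{1/\varphi}(\Gamma,V)$ on the right-hand side is well defined. I would fix a classical positive-definite self-adjoint elliptic pseudodifferential operator $\Lambda$ of order one in $L_2(\Gamma,V)$, normalized so that its spectrum lies in $[1,\infty)$ (for instance $\Lambda=(1+\nabla^{*}\nabla)^{1/2}$ for a Hermitian connection $\nabla$ on $V$); here $L_2(\Gamma,V)$ carries the inner product \eqref{4.332}. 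Since $\Gamma$ is closed, $\Lambda$ has a discrete spectrum tending to $+\infty$, and $H^{(s)}(\Gamma,V)=\mathrm{Dom}(\Lambda^{s})$ with norm equivalent to $\|\Lambda^{s}\cdot\|_{L_2(\Gamma,V)}$ for every $s\in\mathbb{R}$.

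Next I would upgrade this identification to all of $\mathrm{OR}$: namely, $H^{\varphi}(\Gamma,V)=\mathrm{Dom}(\varphi(\Lambda))$ with equivalence of norms, where $\varphi(\Lambda)$ is defined by the spectral calculus. To see this, pick $r>0$ so large that $-r$ and $r$ satisfy \eqref{1.1} for $\varphi$, and apply Theorem~\ref{t4.1} with $s_0=-r$, $s_1=r$; then \eqref{3.30} gives $\psi(t)=t^{1/2}\varphi(t^{1/(2r)})$ for $t\geq1$. The generating operator of the regular pair $[H^{(-r)}(\Gamma,V),H^{(r)}(\Gamma,V)]$, taken with the $\Lambda$-norms, is $J=\Lambda^{2r}$, because it is positive-definite and self-adjoint in $H^{(-r)}(\Gamma,V)$ and $\|\Lambda^{2r}u\|_{H^{(-r)}}=\|\Lambda^{r}u\|_{L_2}=\|u\|_{H^{(r)}}$. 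Since $\Lambda\geq1$, the spectral calculus yields $\psi(J)=(\Lambda^{2r})^{1/2}\varphi\bigl((\Lambda^{2r})^{1/(2r)}\bigr)=\Lambda^{r}\varphi(\Lambda)$, whence $\|\psi(J)u\|_{H^{(-r)}}=\|\Lambda^{-r}\Lambda^{r}\varphi(\Lambda)u\|_{L_2}=\|\varphi(\Lambda)u\|_{L_2}$. Thus $\|u\|_{H^{\varphi}(\Gamma,V)}$ is equivalent to $\|\varphi(\Lambda)u\|_{L_2(\Gamma,V)}$, which is the desired identification; applying it to $1/\varphi$ gives $H^{1/\varphi}(\Gamma,V)=\mathrm{Dom}\bigl((1/\varphi)(\Lambda)\bigr)=\mathrm{Dom}\bigl(\varphi(\Lambda)^{-1}\bigr)$.

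With this identification the duality follows from the spectral theorem and Riesz' theorem. By the very definition of the norms, $\varphi(\Lambda)$ is an isometric isomorphism $H^{\varphi}(\Gamma,V)\to L_2(\Gamma,V)$ and $\varphi(\Lambda)^{-1}$ is an isometric isomorphism $H^{1/\varphi}(\Gamma,V)\to L_2(\Gamma,V)$. For smooth sections $u,v\in C^{\infty}(\Gamma,V)$ the self-adjointness of $\varphi(\Lambda)$ gives $\langle u,v\rangle_{\Gamma,V}=\langle\varphi(\Lambda)u,\varphi(\Lambda)^{-1}v\rangle_{\Gamma,V}$, whence $|\langle u,v\rangle_{\Gamma,V}|\leq\|u\|_{H^{\varphi}}\,\|v\|_{H^{1/\varphi}}$ by the Cauchy--Schwarz inequality; as $C^{\infty}(\Gamma,V)$ is dense in both spaces by Theorem~\ref{t4.2a}, the form \eqref{4.332} extends to a bounded sesquilinear form on $H^{\varphi}(\Gamma,V)\times H^{1/\varphi}(\Gamma,V)$. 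Conversely, let $f$ be a bounded antilinear functional on $H^{\varphi}(\Gamma,V)$. Then $f\circ\varphi(\Lambda)^{-1}$ is bounded antilinear on $L_2(\Gamma,V)$, so by Riesz' theorem there is a unique $h\in L_2(\Gamma,V)$ with $f(\varphi(\Lambda)^{-1}w)=\langle w,h\rangle_{\Gamma,V}$ for all $w\in L_2(\Gamma,V)$ and $\|h\|_{L_2}=\|f\|$. Letting $v\in H^{1/\varphi}(\Gamma,V)$ be the element with $\varphi(\Lambda)^{-1}v=h$ and taking $w=\varphi(\Lambda)u$ yields $f(u)=\langle\varphi(\Lambda)u,\varphi(\Lambda)^{-1}v\rangle_{\Gamma,V}=\langle u,v\rangle_{\Gamma,V}$ for every $u\in H^{\varphi}(\Gamma,V)$, with $\|v\|_{H^{1/\varphi}}=\|f\|$. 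Hence $H^{1/\varphi}(\Gamma,V)$ is the antidual of $H^{\varphi}(\Gamma,V)$ up to equivalence of norms; interchanging the roles of $\varphi$ and $1/\varphi$ gives mutual duality.

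The main obstacle lies entirely in the first two paragraphs, namely in securing the operator $\Lambda$ together with the uniform identification $H^{\varphi}(\Gamma,V)=\mathrm{Dom}(\varphi(\Lambda))$ for every $\varphi\in\mathrm{OR}$: one must know that a first-order positive self-adjoint elliptic $\Lambda$ on the Hermitian bundle $V$ generates the Sobolev scale with equivalent norms (the bundle analogue of the remark following Proposition~\ref{p1}), after which Theorem~\ref{t4.1} upgrades this to the full scale as computed above. Should one prefer to avoid the auxiliary operator, the alternative is to transport the Euclidean Fourier-transform duality $\bigl((H^{\varphi}(\mathbb{R}^{n}))^{p}\bigr)'=(H^{1/\varphi}(\mathbb{R}^{n}))^{p}$ through the atlas and partition of unity; there the obstacle shifts to reconciling the global form \eqref{4.332} with the local $L_2$ pairings, which forces one to absorb the smooth Gram matrices of the Hermitian metric and the local densities, and hence to invoke boundedness of multiplication by smooth functions on $H^{\varphi}(\mathbb{R}^{n})$.
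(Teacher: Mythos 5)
Your proposal is correct in substance, but it takes a genuinely different route from the paper. The paper never introduces an operator generating the scale: it starts from the classical bundle duality, namely that $Q\colon u\mapsto\langle u,\cdot\rangle_{\Gamma,V}$ is an isomorphism of $H^{(s)}(\Gamma,V)$ onto $(H^{(-s)}(\Gamma,V))'$, applies this at two Sobolev levels $s_0<\sigma_0(\varphi)$ and $s_1>\sigma_1(\varphi)$, and interpolates with the parameter $\psi$ from \eqref{3.30}; Theorem~\ref{t4.1} identifies the source space with $H^{\varphi}(\Gamma,V)$, while Proposition~\ref{p5} (duality of interpolation, with $\chi(t)=t/\psi(t)$) combined with Theorem~\ref{t4.4} (applied to $\varphi_1(t)\equiv t^{-s_1}$, $\varphi_2(t)\equiv t^{-s_0}$) identifies the target with $(H^{1/\varphi}(\Gamma,V))'$. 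You instead realize the whole scale spectrally, $H^{\varphi}(\Gamma,V)\cong\mathrm{Dom}(\varphi(\Lambda))$ for a first-order positive-definite self-adjoint elliptic PsDO $\Lambda$, and then duality is the Riesz theorem conjugated by the unitaries $\varphi(\Lambda)$ and $\varphi(\Lambda)^{-1}$; your computation identifying the generating operator of $\bigl[H^{(-r)}(\Gamma,V),H^{(r)}(\Gamma,V)\bigr]$ with $\Lambda^{2r}$ and deducing $\|u\|_{H^{\varphi}(\Gamma,V)}\simeq\|\varphi(\Lambda)u\|_{L_2(\Gamma,V)}$ from Theorem~\ref{t4.1} is sound. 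What your route costs: you must import the nontrivial external fact (Seeley's theorem on powers of elliptic operators plus elliptic regularity on bundles, nowhere available in the paper) that such a $\Lambda$ exists with $\mathrm{Dom}(\Lambda^{s})\cong H^{(s)}(\Gamma,V)$ for all real $s$ --- you rightly flag this as the main obstacle --- and you should phrase the identifications for negative $s$ and for decaying parameters via completions or via domains taken inside $H^{(-r)}(\Gamma,V)$ (as your interpolation computation in fact does), since the literal $L_2$-domain of a bounded operator such as $\varphi(\Lambda)^{-1}$ when $\varphi$ grows is all of $L_2(\Gamma,V)$, whereas $H^{1/\varphi}(\Gamma,V)$ is strictly larger. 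What it buys: the bundle analogue of the extended-Hilbert-scale interpretation cited in the introduction, an isometric (not merely equivalent-norm) duality in the spectral norms, and Theorems~\ref{t4.3}--\ref{t4.5} essentially for free as byproducts. The paper's route is more economical given its infrastructure, using only results already established there together with the Sobolev duality from Wells. One cosmetic point: with the paper's convention the antidual of $H^{1/\varphi}(\Gamma,V)$ is represented by $u\mapsto\langle u,v\rangle_{\Gamma,V}$, so in your Riesz step you should keep track of which slot of the sesquilinear form carries the antilinearity; this is bookkeeping, not a gap.
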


Note that $\varphi\in\mathrm{OR}\Leftrightarrow 1/\varphi\in\mathrm{OR}$; hence, the space $H^{1/\varphi}(\Gamma, V)$ is well defined in this theorem.

Ending this section, we consider a relation between the extended Sobolev scale and the scale $\{C^{q}(\Gamma, V): 0\leq q\in\mathbb{Z}\}$. As usual, $C^{q}(\Gamma, V)$ denotes the Banach space of all $q$ times continuously differentiable sections $u:\Gamma\rightarrow V$.

\begin{theorem}\label{t4.7}
Let $\varphi\in\mathrm{OR}$ and $0\leq q\in\mathbb{Z}$. Then the condition
\begin{equation}\label{4.15}
\int\limits_1^\infty t^{2q+n-1}\varphi^{-2}(t)\,dt<\infty
\end{equation}
is equivalent to the embedding $H^{\varphi}(\Gamma, V) \subset C^q(\Gamma, V)$. Moreover, this embedding is compact.
\end{theorem}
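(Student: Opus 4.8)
\emph{Overview and reduction.} The plan is to reduce the assertion to its Euclidean counterpart by means of the local charts $\alpha_j$, the local trivializations $\beta_j$, and the partition of unity $\{\chi_j\}$, and then to carry out the Euclidean analysis by hand via the Fourier transform. Both spaces are described componentwise by the same recipe: a generalized section $u$ belongs to $H^{\varphi}(\Gamma,V)$ (resp.\ to $C^{q}(\Gamma,V)$) if and only if $(\chi_{j}u^{\times}_{j})\circ\alpha_{j}$ lies in $(H^{\varphi}(\mathbb{R}^{n}))^{p}$ (resp.\ in $(C^{q}(\mathbb{R}^{n}))^{p}$, with compact support) for every $j\in\{1,\dots,\varkappa\}$, the second fact using that $C^{q}$ is a local property preserved by the smooth transition maps of the bundle. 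Hence the inclusion $H^{\varphi}(\Gamma,V)\subset C^{q}(\Gamma,V)$ is equivalent to the local Euclidean statement that every compactly supported $w\in H^{\varphi}(\mathbb{R}^{n})$ belongs to $C^{q}(\mathbb{R}^{n})$. I would also record the convenient reformulation of \eqref{4.15}, obtained by passing to polar coordinates and using $r\le\langle\xi\rangle\le\sqrt2\,r$ together with $\varphi(\langle\xi\rangle)\asymp\varphi(r)$ for $r=|\xi|\ge1$ (valid since $\varphi\in\mathrm{OR}$): condition \eqref{4.15} holds if and only if $\langle\xi\rangle^{q}/\varphi(\langle\xi\rangle)\in L_{2}(\mathbb{R}^{n})$. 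For sufficiency, assuming \eqref{4.15}, for $w\in H^{\varphi}(\mathbb{R}^{n})$ and $|\alpha|\le q$ the Cauchy--Schwarz inequality gives
\begin{equation*}
\int_{\mathbb{R}^{n}}|\xi^{\alpha}\,\widehat{w}(\xi)|\,d\xi
\le\Bigl\|\langle\cdot\rangle^{q}/\varphi(\langle\cdot\rangle)\Bigr\|_{L_{2}(\mathbb{R}^{n})}\,\|w\|_{H^{\varphi}(\mathbb{R}^{n})}<\infty,
\end{equation*}
so $\xi^{\alpha}\widehat{w}\in L_{1}(\mathbb{R}^{n})$, whence $\partial^{\alpha}w$ is bounded and continuous (continuity by dominated convergence in the inverse Fourier integral) with $\|\partial^{\alpha}w\|_{C(\mathbb{R}^{n})}\le c\|w\|_{H^{\varphi}(\mathbb{R}^{n})}$. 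Transplanting componentwise yields the continuous embedding $H^{\varphi}(\Gamma,V)\hookrightarrow C^{q}(\Gamma,V)$.

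\emph{Necessity.} Assuming $H^{\varphi}(\Gamma,V)\subset C^{q}(\Gamma,V)$, the inclusion has closed graph, so the closed graph theorem makes it continuous; pulling back through a fixed chart and trivialization, I obtain a ball $B\subset\mathbb{R}^{n}$ and a point $x_{0}\in B$ with $|\partial^{\alpha}w(x_{0})|\le C\|w\|_{H^{\varphi}(\mathbb{R}^{n})}$ for all $w\in C_{0}^{\infty}(\mathbb{R}^{n})$ supported in $B$ and all $|\alpha|\le q$. To delete the support restriction I fix $\eta\in C_{0}^{\infty}(B)$ with $\eta\equiv1$ near $x_{0}$; multiplication by $\eta$ is bounded on $H^{(s_{0})}(\mathbb{R}^{n})$ and $H^{(s_{1})}(\mathbb{R}^{n})$, hence on $H^{\varphi}(\mathbb{R}^{n})=[H^{(s_{0})},H^{(s_{1})}]_{\psi}$ by the interpolation property of Proposition~\ref{p1}, while $\partial^{\alpha}(\eta w)(x_{0})=\partial^{\alpha}w(x_{0})$. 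Thus $|\partial^{\alpha}w(x_{0})|\le C'\|w\|_{H^{\varphi}(\mathbb{R}^{n})}$ for every $w\in C_{0}^{\infty}(\mathbb{R}^{n})$. Since $C_{0}^{\infty}(\mathbb{R}^{n})$ is dense in $H^{\varphi}(\mathbb{R}^{n})$, the functional $w\mapsto\partial^{\alpha}w(x_{0})$ extends boundedly; expressed through the unitary map $w\mapsto\varphi(\langle\cdot\rangle)\widehat{w}$ of $H^{\varphi}(\mathbb{R}^{n})$ onto $L_{2}(\mathbb{R}^{n})$, the Riesz representation theorem forces $\xi^{\alpha}/\varphi(\langle\xi\rangle)\in L_{2}(\mathbb{R}^{n})$. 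Taking $\alpha=0$ and the permutations of $(q,0,\dots,0)$, and using $\langle\xi\rangle^{2q}\le c_{n}\bigl(1+\sum_{i}\xi_{i}^{2q}\bigr)$, I conclude $\langle\xi\rangle^{q}/\varphi(\langle\cdot\rangle)\in L_{2}(\mathbb{R}^{n})$, i.e.\ \eqref{4.15}.

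\emph{Compactness.} I would prove it directly by the Arzel\`a--Ascoli theorem. Let $(u_{k})$ be bounded in $H^{\varphi}(\Gamma,V)$; in each chart the components $w_{k}:=(\chi_{j}(u_{k})^{\times}_{j})\circ\alpha_{j}$ are bounded in $H^{\varphi}(\mathbb{R}^{n})$ with supports in one common compact set. The continuous embedding already gives a uniform bound on $\partial^{\alpha}w_{k}$ for $|\alpha|\le q$; for equicontinuity, Cauchy--Schwarz yields
\begin{equation*}
|\partial^{\alpha}w_{k}(x)-\partial^{\alpha}w_{k}(y)|
\le c\Bigl(\int_{\mathbb{R}^{n}}\langle\xi\rangle^{2q}\min\{4,|x-y|^{2}|\xi|^{2}\}\,\varphi^{-2}(\langle\xi\rangle)\,d\xi\Bigr)^{1/2},
\end{equation*}
and the integral tends to $0$ as $|x-y|\to0$ by dominated convergence, the integrable majorant $4\langle\xi\rangle^{2q}\varphi^{-2}(\langle\xi\rangle)$ being supplied by \eqref{4.15}. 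Hence $\{\partial^{\alpha}w_{k}\}$ is uniformly bounded and equicontinuous, so a diagonal subsequence converges in $C^{q}$ over all charts and multi-indices, and the embedding is compact.

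\emph{Main obstacle.} The delicate point is the necessity step, namely the passage from the purely local bound $|\partial^{\alpha}w(x_{0})|\le C\|w\|_{H^{\varphi}(\mathbb{R}^{n})}$ for compactly supported $w$ to the global integrability $\langle\xi\rangle^{q}/\varphi(\langle\cdot\rangle)\in L_{2}$: the partition of unity a priori controls only functions supported in a single chart, and these are not dense in $H^{\varphi}(\mathbb{R}^{n})$. The cutoff-multiplier argument removes this gap, but it rests on the boundedness of multiplication by $\eta$ on $H^{\varphi}(\mathbb{R}^{n})$ (which I extract from interpolation, Proposition~\ref{p1}) and on the density of $C_{0}^{\infty}(\mathbb{R}^{n})$; checking these compatibilities is the crux. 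An alternative for compactness would factor the embedding as $H^{\varphi}(\Gamma,V)\hookrightarrow H^{\varphi_{1}}(\Gamma,V)\hookrightarrow C^{q}(\Gamma,V)$ with $\varphi_{1}\in\mathrm{OR}$ satisfying $\varphi_{1}/\varphi\to0$ and \eqref{4.15}, using Theorem~\ref{t4.3}; there the hard part would instead be constructing such $\varphi_{1}$ and verifying $\varphi_{1}\in\mathrm{OR}$, which is why I prefer the Arzel\`a--Ascoli route.
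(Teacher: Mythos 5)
Your proposal is correct, but it diverges from the paper's own proof in substance, so a comparison is in order. The chart-and-trivialization reduction is common to both, and your sufficiency step is essentially the paper's as well: your Cauchy--Schwarz argument on the Fourier side is precisely the proof of the H\"ormander embedding theorem that the paper simply cites as Proposition~\ref{p3}. The genuine differences are two. First, for necessity the paper transplants a compactly supported $\omega\in H^{\varphi}(\mathbb{R}^n)$ into the bundle via the sewing operator $K$ from the proof of Theorem~\ref{t4.1}, verifies the hypothesis \eqref{6.1}, and invokes the converse part of Proposition~\ref{p3}; you instead re-derive that converse from scratch (closed graph theorem, the cutoff-multiplier trick justified by interpolation, density of $C_{0}^{\infty}(\mathbb{R}^n)$ in $H^{\varphi}(\mathbb{R}^n)$, Riesz representation). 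This is sound, but note that your ``pulling back through a fixed chart'' step silently uses the same ingredients as the paper: the sewing bound \eqref{3.14} and the choice of a chart region on which $\chi_{1}\equiv1$; also the density of $C_{0}^{\infty}(\mathbb{R}^n)$ in $H^{\varphi}(\mathbb{R}^n)$ and the identification of the Riesz representer (a short distributional argument showing the representing function must equal $c\,\overline{\xi^{\alpha}}e^{-ix_{0}\cdot\xi}\varphi^{-2}(\langle\xi\rangle)$ a.e.) deserve explicit justification. Second, for compactness the paper does not use Arzel\`a--Ascoli at all: it builds an auxiliary index $\varphi_{0}:=\varphi\,\eta^{1/4}\in\mathrm{OR}$, where $\eta(t):=\int_{t}^{\infty}\tau^{2q+n-1}\varphi^{-2}(\tau)\,d\tau$, so that $\varphi_{0}/\varphi\to0$ while $\varphi_{0}$ still satisfies \eqref{4.15}, and then factors $H^{\varphi}(\Gamma,V)\hookrightarrow H^{\varphi_{0}}(\Gamma,V)\hookrightarrow C^{q}(\Gamma,V)$, the first embedding compact by Theorem~\ref{t4.3} --- exactly the alternative you sketched and set aside. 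What each buys: the paper's route is shorter given its toolkit and stays inside the scale (the $\eta^{1/4}$ device makes the verification $\varphi_{0}\in\mathrm{OR}$ painless, and it shows \eqref{4.15} always leaves room for an intermediate space); your route is elementary and self-contained, avoiding both the construction of $\varphi_{0}$ and the compactness half of Theorem~\ref{t4.3}, at the price of more hands-on verification in the necessity step.
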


\section{Auxiliary results}

For the readers' convenience, we formulate properties of the quadratic interpolation which will be used in our proofs, we following the monograph \cite[Section~1.1]{MikhailetsMurach14}. The first of these properties concerns the embedding of spaces obtained by the interpolation \cite[Theorem~1.2]{MikhailetsMurach14}.

\begin{proposition}\label{p6}
Let functions $\psi_1, \psi_2\in\mathcal{B}$, and suppose that the function $\psi_1/\psi_2$ is bounded in a neighbourhood  of infinity. Then, for every regular pair $X = [X_0, X_1]$ of Hilbert spaces, we have the dense continuous embedding $X_{\psi_2} \hookrightarrow X_{\psi_1}$. If the embedding $X_1\hookrightarrow X_0$ is compact and if $\psi_1(t)/\psi_2(t)\rightarrow0$ as $t\rightarrow\infty$, then the embedding $X_{\psi_2} \hookrightarrow X_{\psi_1}$ is also compact.
 \end{proposition}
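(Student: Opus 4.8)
The plan is to reduce the whole statement to the spectral representation of the generating operator $J$ of the pair $X=[X_{0},X_{1}]$. Since the embedding $X_{1}\hookrightarrow X_{0}$ is continuous, the identity $\|Ju\|_{X_{0}}=\|u\|_{X_{1}}$ gives $\|u\|_{X_{0}}\le C\|u\|_{X_{1}}=C\|Ju\|_{X_{0}}$, so $J$ is bounded below; being a positive-definite self-adjoint operator, $J$ then has spectrum in a half-line $[\kappa,\infty)$ with $\kappa>0$. Let $E(\cdot)$ be its spectral resolution. By the definition of $\psi(J)$ one has $\|u\|_{X_{\psi}}^{2}=\int_{\kappa}^{\infty}\psi^{2}(t)\,d\langle E(t)u,u\rangle_{X_{0}}$ for every $\psi\in\mathcal{B}$, and $X_{\psi}$ is precisely the set of $u\in X_{0}$ for which this integral is finite. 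Everything below is read off from this formula.

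First I would strengthen the hypothesis on $\psi_{1}/\psi_{2}$. By assumption this ratio is bounded only near infinity, but the definition of $\mathcal{B}$ guarantees that $\psi_{1}$ is bounded on the compact part $[\kappa,M]$ of the spectrum while $1/\psi_{2}$ is bounded on all of $[\kappa,\infty)$; hence $\psi_{1}/\psi_{2}$ is in fact bounded on the entire spectrum, say $\psi_{1}(t)\le c_{0}\psi_{2}(t)$ for $t\ge\kappa$. Substituting this into the spectral integral yields $\|u\|_{X_{\psi_{1}}}\le c_{0}\|u\|_{X_{\psi_{2}}}$, which is the continuous embedding $X_{\psi_{2}}\hookrightarrow X_{\psi_{1}}$. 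For density I would use the common core $\mathcal{L}:=\bigcup_{N}E([\kappa,N])X_{0}$: on each spectral subspace $E([\kappa,N])X_{0}$ every $\psi\in\mathcal{B}$ is bounded, so $\mathcal{L}\subset X_{\psi_{2}}\subset X_{\psi_{1}}$, and for any $u\in X_{\psi_{1}}$ the truncations $E([\kappa,N])u$ converge to $u$ in $X_{\psi_{1}}$ because the tail $\int_{N}^{\infty}\psi_{1}^{2}(t)\,d\langle E(t)u,u\rangle_{X_{0}}$ tends to $0$. Thus $\mathcal{L}$, and a fortiori $X_{\psi_{2}}$, is dense in $X_{\psi_{1}}$.

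For the compactness assertion I would exploit that compactness of $X_{1}\hookrightarrow X_{0}$ is equivalent to compactness of $J^{-1}$ on $X_{0}$. A compact positive-definite self-adjoint operator has discrete spectrum, so $J$ admits an orthonormal eigenbasis $\{e_{k}\}$ of $X_{0}$ with $Je_{k}=\lambda_{k}e_{k}$ and $\lambda_{k}\to\infty$. In this basis $X_{\psi}$ consists of the vectors $\sum_{k}c_{k}e_{k}$ with $\sum_{k}\psi^{2}(\lambda_{k})|c_{k}|^{2}<\infty$, and the vectors $e_{k}/\psi(\lambda_{k})$ form an orthonormal basis of $X_{\psi}$. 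Consequently, in the two orthonormal bases $\{e_{k}/\psi_{2}(\lambda_{k})\}$ of $X_{\psi_{2}}$ and $\{e_{k}/\psi_{1}(\lambda_{k})\}$ of $X_{\psi_{1}}$, the embedding $X_{\psi_{2}}\hookrightarrow X_{\psi_{1}}$ becomes the diagonal operator with entries $\psi_{1}(\lambda_{k})/\psi_{2}(\lambda_{k})$. The key step is the standard fact that a diagonal operator between Hilbert spaces is compact precisely when its entries tend to $0$: approximating by finite-rank truncations gives compactness as soon as $\psi_{1}(\lambda_{k})/\psi_{2}(\lambda_{k})\to0$, which follows from $\lambda_{k}\to\infty$ together with the hypothesis $\psi_{1}(t)/\psi_{2}(t)\to0$ as $t\to\infty$.

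The estimates in the first two steps are routine once the boundedness of $\psi_{1}/\psi_{2}$ has been transferred from a neighbourhood of infinity to the whole spectrum of $J$. I expect the only genuinely delicate point to be the compact case: one must first convert compactness of the abstract embedding $X_{1}\hookrightarrow X_{0}$ into discreteness of the spectrum of $J$, and then recognize the interpolation embedding as a diagonal operator whose compactness is governed exactly by the decay $\psi_{1}/\psi_{2}\to0$.
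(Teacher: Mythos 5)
Your proof is correct. Note that the paper itself contains no proof of this proposition: it is quoted as a known result, with a reference to \cite[Theorem~1.2]{MikhailetsMurach14}, so the only meaningful comparison is with the argument in that monograph, which is likewise spectral-theoretic. Your reconstruction is sound at every step: deducing that the spectrum of $J$ lies in $[\kappa,\infty)$, $\kappa>0$, from the continuity of $X_1\hookrightarrow X_0$; upgrading boundedness of $\psi_1/\psi_2$ from a neighbourhood of infinity to all of $[\kappa,\infty)$ using the defining properties of $\mathcal{B}$ (boundedness of $\psi_1$ on compact intervals, boundedness of $1/\psi_2$ on $[\kappa,\infty)$); the norm estimate via the spectral integral; and density via the core $\bigcup_N E([\kappa,N])X_0$. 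The one place where you take a heavier route than necessary is the compact case: you pass through compactness of $J^{-1}$, the Hilbert--Schmidt theorem (discrete spectrum, eigenbasis $\{e_k\}$, $\lambda_k\to\infty$), and a diagonal-operator lemma. That is valid, but the machinery you already set up gives a shorter and more flexible argument: the embedding $X_{\psi_2}\hookrightarrow X_{\psi_1}$ is the norm limit, as $N\to\infty$, of the operators $u\mapsto E([\kappa,N])u$, since the error is bounded by $\sup_{t>N}\bigl(\psi_1(t)/\psi_2(t)\bigr)\,\|u\|_{X_{\psi_2}}\to0$; and each operator $E([\kappa,N])$ is compact from $X_{\psi_2}$ to $X_{\psi_1}$ because on its range all four norms $\|\cdot\|_{X_0}$, $\|\cdot\|_{X_1}$, $\|\cdot\|_{X_{\psi_1}}$, $\|\cdot\|_{X_{\psi_2}}$ are equivalent (each $\psi_i$ and $1/\psi_i$ is bounded on $[\kappa,N]$), so it factors through the compact embedding $X_1\hookrightarrow X_0$. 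This variant avoids the eigenbasis entirely and also makes transparent that only compactness of $X_1\hookrightarrow X_0$, not any explicit diagonalization, is being used; either way, your proof establishes the proposition.
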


The next property reduces the interpolation between orthogonal sums of Hilbert spaces to the interpolation between the summands \cite[Theorem~1.5]{MikhailetsMurach14}.

\begin{proposition}\label{p2}
Let $\bigl[X_{0}^{(j)},X_{1}^{(j)}\bigr]$, with $j=1,\ldots,r$, be a finite collection of regular couples of Hilbert spaces. Then, for every function $\psi\in\mathcal{B}$, we have
\begin{equation*}
\biggl[\,\bigoplus_{j=1}^{r}X_{0}^{(j)},\,\bigoplus_{j=1}^{r}X_{1}^{(j)}\biggr]_{\psi}=\,
\bigoplus_{j=1}^{r}\bigl[X_{0}^{(j)},\,X_{1}^{(j)}\bigr]_{\psi}
\end{equation*}
with equality of norms.
\end{proposition}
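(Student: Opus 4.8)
The plan is to reduce the whole statement to a single fact about the functional calculus of a finite orthogonal direct sum of self-adjoint operators. Write $X_0:=\bigoplus_{j=1}^{r}X_0^{(j)}$ and $X_1:=\bigoplus_{j=1}^{r}X_1^{(j)}$, and note first that $[X_0,X_1]$ is again a regular pair: since each embedding $X_1^{(j)}\hookrightarrow X_0^{(j)}$ is continuous and dense and the sum is finite, so is the embedding $X_1\hookrightarrow X_0$. Let $J_j$ be the generating operator of $[X_0^{(j)},X_1^{(j)}]$ and let $J$ be the generating operator of $[X_0,X_1]$. My first step is to identify $J$ explicitly as $\bigoplus_{j=1}^{r}J_j$, whose domain (the sum being finite) is exactly $\bigoplus_{j=1}^{r}\mathrm{dom}\,J_j=X_1$. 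This operator is self-adjoint and positive-definite because each $J_j$ is, and for every $u=(u_1,\dots,u_r)\in X_1$ one has
\[
\Bigl\|\bigoplus_{j=1}^{r}J_j\,u\Bigr\|_{X_0}^2=\sum_{j=1}^{r}\|J_ju_j\|_{X_0^{(j)}}^2=\sum_{j=1}^{r}\|u_j\|_{X_1^{(j)}}^2=\|u\|_{X_1}^2 .
\]
Thus $\bigoplus_{j}J_j$ has all the defining properties of the generating operator of $[X_0,X_1]$, and by the uniqueness of the latter I conclude $J=\bigoplus_{j=1}^{r}J_j$.

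The main point, and the only genuine obstacle, is to show that the Borel functional calculus commutes with the finite orthogonal sum, i.e.\ $\psi(J)=\bigoplus_{j=1}^{r}\psi(J_j)$. The cleanest route I would take is through the spectral resolution: if $E_j$ denotes the projection-valued spectral measure of $J_j$, then $E(\cdot):=\bigoplus_{j}E_j(\cdot)$ is a projection-valued measure on $X_0$ satisfying $\int\lambda\,dE(\lambda)=\bigoplus_{j}J_j=J$, so by the uniqueness in the Spectral Theorem $E$ is precisely the spectral measure of $J$. Integrating the Borel function $\psi$ against $E$ componentwise then gives $\psi(J)=\int\psi\,dE=\bigoplus_{j}\int\psi\,dE_j=\bigoplus_{j}\psi(J_j)$. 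This is the place that must be argued carefully, since $\psi$ is merely Borel measurable rather than continuous; everything else is bookkeeping.

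Finally, I would read off the statement from the identity $\psi(J)=\bigoplus_{j=1}^{r}\psi(J_j)$. By definition the underlying space of $[X_0,X_1]_\psi=X_\psi$ is $\mathrm{dom}\,\psi(J)$, while the domain of the finite sum $\bigoplus_{j}\psi(J_j)$ is $\bigoplus_{j}\mathrm{dom}\,\psi(J_j)=\bigoplus_{j}[X_0^{(j)},X_1^{(j)}]_\psi$; hence the two spaces coincide as sets. Moreover, for $u=(u_1,\dots,u_r)$ in this common domain,
\[
\|u\|_{X_\psi}^2=\|\psi(J)u\|_{X_0}^2=\sum_{j=1}^{r}\|\psi(J_j)u_j\|_{X_0^{(j)}}^2=\sum_{j=1}^{r}\|u_j\|_{[X_0^{(j)},X_1^{(j)}]_\psi}^2 ,
\]
which is exactly the squared norm of the orthogonal sum $\bigoplus_{j}[X_0^{(j)},X_1^{(j)}]_\psi$. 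This establishes the asserted equality of spaces with equality of norms. Both the first and third steps merely unwind the definitions of the generating operator and of $X_\psi$; the interpolation-parameter hypothesis plays no role here, since the claim holds for arbitrary $\psi\in\mathcal{B}$, and the real content sits entirely in the spectral identity of the second step.
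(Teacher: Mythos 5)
Your proof is correct. Note that the paper does not prove Proposition~\ref{p2} itself but cites it from \cite[Theorem~1.5]{MikhailetsMurach14}, and your argument---identifying the generating operator of the summed pair as $\bigoplus_{j}J_j$ via uniqueness of the generating operator, and then showing the Borel functional calculus respects finite orthogonal sums through uniqueness of the spectral measure---is exactly the standard proof underlying that citation, so it is essentially the same approach.
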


The interpolation possesses the following reiteration property \cite[Theorem~1.3]{MikhailetsMurach14}:

\begin{proposition}\label{p7}
Suppose that $\psi_1,\psi_2,\psi\in\mathcal{B}$ and that the function $\psi_1/\psi_2$ is bounded in a neighbourhood  of infinity. Then, for every regular pair $X$ of Hilbert spaces, we have $[X_{\psi_1}, X_{\psi_2}]_\psi = X_\omega$ with equality of norms. Here, the function $\omega(t):= \psi_1(t)\psi(\psi_2(t)/\psi_1(t))$ of $t>0$ belongs to $\mathcal{B}$. If the functions $\psi_1,\psi_2,\psi$ are interpolation  parameters, then $\omega$ is also an interpolation  parameter.
\end{proposition}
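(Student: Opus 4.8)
The plan is to reduce everything to the Borel functional calculus of the generating operator $J$ of the pair $X=[X_0,X_1]$. First I would observe that, since $\psi_1/\psi_2$ is bounded near infinity, Proposition~\ref{p6} furnishes the dense continuous embedding $X_{\psi_2}\hookrightarrow X_{\psi_1}$, so that $[X_{\psi_1},X_{\psi_2}]$ is a regular pair and the left-hand side of the asserted equality is well defined. The basic facts I would exploit are that $\psi_1(J)$ is, by construction, an isometric isomorphism of $X_{\psi_1}$ onto $X_0$ (it is positive-definite and bounded below on $\sigma(J)\subset[c,\infty)$, $c>0$, since $1/\psi_1$ is bounded on $[c,\infty)$), and that all the operators occurring below are Borel functions of the single self-adjoint operator $J$ and hence commute.

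The key step is to identify the generating operator $K$ of the regular pair $[X_{\psi_1},X_{\psi_2}]$. Set $\theta:=\psi_2/\psi_1$ on $\sigma(J)$; using $\psi_1,\psi_2\in\mathcal{B}$ together with the boundedness of $\psi_1/\psi_2$ near infinity, one checks that $\theta$ is bounded below by a positive constant on $\sigma(J)$, so that $\theta(J)$ is positive-definite and self-adjoint in $X_0$. I claim that $K=\theta(J)$, now regarded as an operator in $X_{\psi_1}$. Indeed, its domain is $\{u\in X_{\psi_1}:\psi_1(J)\theta(J)u\in X_0\}=\{u:\psi_2(J)u\in X_0\}=X_{\psi_2}$, and for $u\in X_{\psi_2}$ one has $\|\theta(J)u\|_{X_{\psi_1}}=\|\psi_1(J)\theta(J)u\|_{X_0}=\|\psi_2(J)u\|_{X_0}=\|u\|_{X_{\psi_2}}$, exactly as required of a generating operator; positive-definiteness and self-adjointness in $X_{\psi_1}$ follow because the unitary $U:=\psi_1(J)\colon X_{\psi_1}\to X_0$ conjugates $K$ into $UKU^{-1}=\theta(J)$ acting in $X_0$. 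This identification, and in particular the verification that $\theta$ is bounded away from $0$ on $\sigma(J)$ so that $K$ is genuinely positive-definite and its domain is precisely $X_{\psi_2}$, is the point I expect to require the most care, since the whole computation rests on it.

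With $K$ identified, the equality of norms follows from spectral calculus. Since $U$ is unitary with $UKU^{-1}=\theta(J)$, the Spectral Theorem gives $\psi(K)=U^{-1}\psi(\theta(J))U$, where $\psi(\theta(J))$ is now computed in $X_0$. Hence, for every admissible $u$,
\[
\|u\|_{[X_{\psi_1},X_{\psi_2}]_\psi}=\|\psi(K)u\|_{X_{\psi_1}}=\|\psi(\theta(J))\,\psi_1(J)u\|_{X_0}=\|\omega(J)u\|_{X_0}=\|u\|_{X_\omega},
\]
because $\psi(\theta(J))\,\psi_1(J)=\bigl((\psi\circ\theta)\cdot\psi_1\bigr)(J)=\omega(J)$ by multiplicativity of the functional calculus, with $\omega(t)=\psi_1(t)\,\psi(\psi_2(t)/\psi_1(t))$. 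Matching domains as well, this yields $[X_{\psi_1},X_{\psi_2}]_\psi=X_\omega$ with equality of norms.

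It remains to verify that $\omega\in\mathcal{B}$ and, under the stronger hypothesis, that $\omega$ is an interpolation parameter. Membership $\omega\in\mathcal{B}$ is routine: measurability is clear; on a compact $[a,b]\subset(0,\infty)$ the factor $\psi_1$ is bounded while $\theta$ takes values in a compact subinterval of $(0,\infty)$ (as $\psi_1,\psi_2$ are bounded above and below there), so $\psi\circ\theta$ is bounded; and on $[r,\infty)$ one uses that $\theta\geq m_*>0$ there together with the $\mathcal{B}$-property $\psi\geq\delta>0$ on $[m_*,\infty)$ and the boundedness of $1/\psi_1$ to bound $1/\omega$. Finally, if $\psi_1,\psi_2,\psi$ are all interpolation parameters, then for arbitrary regular pairs $X,Y$ and any linear $T$ bounded $X_j\to Y_j$ for $j\in\{0,1\}$, the parameters $\psi_1,\psi_2$ yield bounded operators $T\colon X_{\psi_1}\to Y_{\psi_1}$ and $T\colon X_{\psi_2}\to Y_{\psi_2}$; applying the interpolation parameter $\psi$ to the regular pairs $[X_{\psi_1},X_{\psi_2}]$ and $[Y_{\psi_1},Y_{\psi_2}]$ then gives $T\colon X_\omega\to Y_\omega$ bounded, so $\omega$ is an interpolation parameter.
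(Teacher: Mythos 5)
The paper itself contains no proof of Proposition~\ref{p7}: the statement is quoted directly from the monograph of Mikhailets and Murach \cite[Theorem~1.3]{MikhailetsMurach14}, so there is no internal argument to compare yours against. Judged on its own terms, your proof is correct, and it is essentially the natural (and standard) argument for this reiteration theorem: you identify the generating operator of the regular pair $[X_{\psi_1},X_{\psi_2}]$ as $K=\theta(J)$ with $\theta:=\psi_2/\psi_1$, transported to $X_{\psi_1}$ by the isometric isomorphism $\psi_1(J)\colon X_{\psi_1}\to X_0$, and then the equality $[X_{\psi_1},X_{\psi_2}]_\psi=X_\omega$ with equal norms drops out of the composition and multiplication rules for the Borel functional calculus of the single operator $J$. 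The key verifications are in place: $\theta$ is bounded below by a positive constant on $\sigma(J)\subset[c,\infty)$ (combining the boundedness of $\psi_1/\psi_2$ near infinity with the $\mathcal{B}$-properties of $\psi_1,\psi_2$ on the compact part $[c,r_0]$); the domain of $\theta(J)$ viewed in $X_{\psi_1}$ is exactly $X_{\psi_2}$, with the isometry $\|\theta(J)u\|_{X_{\psi_1}}=\|u\|_{X_{\psi_2}}$; and the closing deduction that $\omega$ is an interpolation parameter, by routing $T$ through the pairs $[X_{\psi_1},X_{\psi_2}]$ and $[Y_{\psi_1},Y_{\psi_2}]$ and invoking the first part for both $X$ and $Y$, is exactly right.

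The one place where you wave your hands is ``matching domains as well.'' The product rule for the functional calculus gives only the inclusion $\psi(\theta(J))\,\psi_1(J)\subset\omega(J)$, with $\mathrm{Dom}\bigl(\psi(\theta(J))\,\psi_1(J)\bigr)=\mathrm{Dom}(\omega(J))\cap\mathrm{Dom}(\psi_1(J))$; to conclude $\mathrm{Dom}(\psi(K))=X_\omega$ you therefore need $X_\omega\subset X_{\psi_1}$. This follows in one line from ingredients you already display: since $\theta\geq m_*>0$ on $\sigma(J)$ and $1/\psi$ is bounded on $[m_*,\infty)$, one has $\psi\circ\theta\geq\delta>0$ on $\sigma(J)$, hence $\psi_1\leq\delta^{-1}\omega$ there and $\mathrm{Dom}(\omega(J))\subset\mathrm{Dom}(\psi_1(J))$. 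So this is a small patch to write out, not a genuine gap.
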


The last property reduces the interpolation between the dual or antidual spaces of given Hilbert spaces to the interpolation between these given spaces \cite[Theorem~1.4]{MikhailetsMurach14}. We need this property in the case of antidual spaces. If $H$ is a Hilbert space, then we let $H'$ denote the antidual of $H$; namely, $H'$ consists of all antilinear continuous functionals $l: H \rightarrow \mathbb{C}$. The linear space $H'$ is Hilbert with respect to the inner product  $(l_1, l_2)_{H'} := (v_1, v_2)_{H}$ of functionals $l_1, l_2\in H'$; here $v_j$, with $j\in \{1, 2\}$, is a unique vector from $H$ such that $l_j(w) = (v_j, w)_H$ for every $w\in H$. Note that we do not identify $H$ and $H'$ on the base of the Riesz theorem (according to which  $v_j$ exists).

\begin{proposition}\label{p5}
Let a function $\psi\in\mathcal{B}$ be such that the function $\psi(t)/t$ is bounded in a neighbourhood  of infinity. Then, for every regular pair $[X_0, X_1]$ of Hilbert spaces, we have $[X_1',X_0']_{\psi}=[X_0,X_1]'_{\chi}$ with equality of norms. Here, the function $\chi\in\mathcal{B}$ is defined by the formula $\chi(t):=t/\psi(t)$ for $t>0$. If $\psi$ is an interpolation parameter, then $\chi$ is an interpolation parameter as well.
\end{proposition}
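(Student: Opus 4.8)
The plan is to reduce everything to an explicit diagonal (spectral) model and to carry the two sides of the claimed identity into one and the same weighted $L_2$-space. Let $J$ be the generating operator of the regular pair $X=[X_0,X_1]$; it is positive-definite and self-adjoint in $X_0$ with domain $X_1$ and spectrum bounded below by some $\kappa>0$. By the Spectral Theorem I would choose a measure space $(\mathfrak M,\mu)$ and a unitary $U\colon X_0\to L_2(\mathfrak M,\mu)$ carrying $J$ into multiplication by a measurable function $t(\cdot)\ge\kappa$. Then $X_0\cong L_2(\mu)$, $X_1\cong L_2(t^2\mu)$, and, for any $\theta\in\mathcal B$, $X_\theta\cong L_2(\theta(t)^2\mu)$ with $\|u\|_{X_\theta}=(\int\theta(t)^2|Uu|^2\,d\mu)^{1/2}$; in particular $X_\chi\cong L_2((t/\psi)^2\mu)$. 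The hypothesis that $\psi(t)/t$ be bounded near infinity is exactly what guarantees $\chi=t/\psi\in\mathcal B$ together with the dense embeddings $X_1\hookrightarrow X_\chi\hookrightarrow X_0$, so that $[X_0,X_1]'_\chi$ is meaningful.

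Next I would pin down how the antiduals sit inside $X_1'$. Since $X_1\hookrightarrow X_0$ is dense and continuous, antiduality reverses the arrow and yields the regular pair $[X_1',X_0']$ with $X_0'\hookrightarrow X_1'$ dense and continuous; hence $[X_1',X_0']_\psi$ and, via $X_1\hookrightarrow X_\chi$, also $(X_\chi)'$ are subspaces of $X_1'$, and the assertion is their coincidence with equal norms. Using the isometric identifications $X_0\cong X_0'$ and $X_1\cong X_1'$ furnished by the Riesz representation, a direct computation shows that restricting a functional $l_v=(v,\cdot)_{X_0}\in X_0'$ to $X_1$ corresponds, in the model $X_1'\cong L_2(t^2\mu)$, to the element $v/t^2$; from this one reads off that the generating operator of $[X_1',X_0']$ is multiplication by $t$ on $L_2(t^2\mu)$, whence
\begin{equation*}
[X_1',X_0']_\psi\cong\bigl\{u:\ \textstyle\int t^2\psi(t)^2|u|^2\,d\mu<\infty\bigr\},\qquad \|u\|^2=\int t^2\psi(t)^2|u|^2\,d\mu .
\end{equation*}
A parallel computation, representing $n_z=(z,\cdot)_{X_\chi}\in(X_\chi)'$ restricted to $X_1$ by the element $z/\psi^2$ of $L_2(t^2\mu)$, produces the very same weighted space with the very same norm. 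This matching of the weights $t^2\psi^2$ is the crux of the argument; the one delicate point I expect is the bookkeeping of domains and of the direction of the embeddings when the isometries $X_j\cong X_j'$ are composed with the restriction maps. Once the weights agree, equality \emph{with equal norms} is immediate.

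Finally, for the interpolation-parameter claim I would avoid Peetre's pseudoconcavity criterion and argue by operator duality, using the identity just established (which holds for every $\psi\in\mathcal B$ with $\psi/t$ bounded near infinity, whether or not $\psi$ is an interpolation parameter). Given regular pairs $X,Y$ and a linear $T$ on $X_0$ that is bounded $X_j\to Y_j$ for $j=0,1$, the antidual operator $T'$ is bounded $Y_0'\to X_0'$ and $Y_1'\to X_1'$, i.e. it is a morphism of the pairs $[Y_1',Y_0']\to[X_1',X_0']$. As $\psi$ is an interpolation parameter, $T'\colon[Y_1',Y_0']_\psi\to[X_1',X_0']_\psi$ is bounded, and by the identity this reads $T'\colon(Y_\chi)'\to(X_\chi)'$. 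Passing to antiduals once more and using that a Hilbert space is canonically its own second antidual identifies $(T')'$ with $T$, so $T\colon X_\chi\to Y_\chi$ is bounded. Hence $\chi$ is an interpolation parameter, which completes the proof.
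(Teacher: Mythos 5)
Your proposal is correct, but there is no in-paper proof to compare it with: Proposition~\ref{p5} appears among the auxiliary results and is quoted, without proof, from \cite[Theorem~1.4]{MikhailetsMurach14}, so your argument is a genuine self-contained reconstruction. Its substance checks out. In the spectral model where the generating operator $J$ of $[X_0,X_1]$ becomes multiplication by $t(\cdot)\ge\kappa>0$ on $L_2(\mu)$, your bookkeeping is right: $X_1'\cong L_2(t^2\mu)$ via Riesz, restriction maps $X_0'$ isometrically onto $L_2(t^4\mu)$, hence the generating operator of $[X_1',X_0']$ is multiplication by $t$ and $[X_1',X_0']_\psi$ carries the weight $t^2\psi(t)^2$; on the other side, a functional $(z,\cdot)_{X_\chi}$ restricts to the element $z/\psi^2$ of $L_2(t^2\mu)$, giving the same set and the same weight, whence equality of norms (do state explicitly that both restriction maps are bijections \emph{onto} these weighted subspaces of $L_2(t^2\mu)$, since that is what the asserted equality means). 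The duality argument for the interpolation-parameter claim is also sound and avoids Peetre's criterion, but the step ``$(T')'$ is identified with $T$'' is the one place that needs a line of justification: a priori, for $u\in X_\chi$ the vector $Tu$ is only an element of $Y_0$, so one cannot immediately write $\widehat{Tu}$. The clean finish is to note that $(T')'$, read through the canonical isometries $X_\chi\cong(X_\chi)''$ and $Y_\chi\cong(Y_\chi)''$, and $T$ are both continuous maps from $X_\chi$ into $Y_0$ that agree on $X_1$, which is dense in $X_\chi$; hence they coincide, so $T(X_\chi)\subset Y_\chi$ with the required bound. For comparison, the last claim also follows in one line from the criterion stated in the paper (interpolation parameter $\Leftrightarrow$ pseudoconcavity near infinity): pseudoconcavity of $\psi$ yields $\psi(s)\le C\max\{1,s/t\}\,\psi(t)$ for large $s,t$, and then $\chi(s)/\chi(t)=(s/t)\,\psi(t)/\psi(s)\le C\max\{1,s/t\}$, i.e.\ $\chi$ is pseudoconcave near infinity as well; your duality route is longer but has the merit of working directly from the operator-theoretic definition.
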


As to this theorem, we note that, if $[X_0, X_1]$ is a regular pair of Hilbert spaces, then the dual pair $[X_1', X_0']$ is also regular provided that we identify functionals from $X_0'$ with their restrictions on $X_1$.

We also need the following corollary of Ovchinnikov's theorem \cite[Theorem 11.4.1]{Ovchinnikov}:

\begin{proposition}\label{p4}
Let $[X_0, X_1]$ be a regular pair of Hilbert spaces. A Hilbert space  $H$ is an interpolation space between $X_0$ and $X_1$ if and only if $H=X_\psi$ up to equivalence of norms for a certain interpolation parameter $\psi\in\mathcal{B}$.
\end{proposition}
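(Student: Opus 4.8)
The plan is to prove the two implications separately. The forward implication (``if'') is a direct consequence of the definitions, whereas the converse (``only if'') is where Ovchinnikov's theorem is brought to bear. Throughout I would use that, by hypothesis, $[X_0,X_1]$ is a regular pair, so that the generating operator $J$ and the interpolation functors $X_\psi$ are available, and that the two defining properties of an interpolation space are invariant under replacing the norm of $H$ by an equivalent one.

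First I would dispatch the ``if'' part. Suppose $H=X_\psi$ up to equivalence of norms for some interpolation parameter $\psi\in\mathcal{B}$. Condition~(i) in the definition of an interpolation space is then exactly the chain of dense continuous embeddings \eqref{3.18}. For condition~(ii) I would invoke the very definition of an interpolation parameter, applied with the target couple $Y:=X$ taken equal to the source couple: if a linear operator $T$ given on $X_0$ restricts to a bounded operator on each of $X_0$ and $X_1$, then its restriction to $X_\psi$ is bounded as well. Hence $H=X_\psi$ is an interpolation space between $X_0$ and $X_1$. This direction needs no deep input.

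For the ``only if'' part I would appeal to Ovchinnikov's theorem \cite[Theorem~11.4.1]{Ovchinnikov}. The conceptual engine is the spectral reduction of the regular couple $X=[X_0,X_1]$ through $J$: by the Spectral Theorem, $X_0$ is unitarily equivalent to a space $L_2(M,d\mu)$ on which $J$ acts as multiplication by a positive measurable function $\lambda$, and $X_1$ becomes the weighted space with weight $\lambda^2$. Moreover $J$ is bounded below by a positive constant, since the continuity of $X_1\hookrightarrow X_0$ gives $\|Ju\|_{X_0}=\|u\|_{X_1}\geq\mathrm{const}\,\|u\|_{X_0}$, so the spectrum of $J$ lies in a half-line $[\delta,\infty)$ with $\delta>0$. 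Under this identification every interpolation space $H$ between $X_0$ and $X_1$ corresponds to a weighted $L_2$-space, and Ovchinnikov's theorem describes precisely which weights occur: they are of the form $\psi(\lambda)^2$ for a function $\psi$ admissible for the couple, which yields $H=X_\psi$ up to equivalence of norms.

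The main obstacle, and the only genuinely substantive point, is to match the function furnished by Ovchinnikov's theorem with the notion of interpolation parameter used in this paper. I would first normalize $\psi$ into the class $\mathcal{B}$ by fixing its values on $(0,\delta)$ arbitrarily, say equal to a positive constant; since the spectrum of $J$ is contained in $[\delta,\infty)$, this leaves $X_\psi$ unchanged, and it secures the measurability and boundedness conditions defining $\mathcal{B}$. It then remains to verify that $\psi$ is pseudoconcave in a neighbourhood of infinity, which is exactly the admissibility encoded in Ovchinnikov's description; by the criterion recalled before Proposition~\ref{p1} --- itself a consequence of Peetre's description of the interpolation functions for weighted Lebesgue spaces --- this pseudoconcavity near infinity is equivalent to $\psi$ being an interpolation parameter. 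Combining the two implications, together with the invariance of the interpolation-space properties under equivalent renorming, then completes the proof.
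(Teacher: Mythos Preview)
Your approach is correct and aligns with the paper's treatment: the paper does not actually prove Proposition~\ref{p4} but merely states it as a corollary of Ovchinnikov's theorem \cite[Theorem~11.4.1]{Ovchinnikov}, without further argument. Your proposal is a faithful unpacking of that citation---the ``if'' direction from \eqref{3.18} and the definition of interpolation parameter, the ``only if'' direction via Ovchinnikov's description of all Hilbert interpolation spaces for a weighted $L_2$-couple, and the passage from Ovchinnikov's pseudoconcave weight to an interpolation parameter in $\mathcal{B}$ via the Peetre criterion cited in the paper just before Proposition~\ref{p1}---so there is nothing to compare beyond noting that you have supplied the details the paper omits.
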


\section{Proofs of the basic results}\label{sec7}

\begin{proof}[Proof of Theorem $\ref{t4.1}$]
Mainly following \cite[Section~4]{Zinchenko17Collection3}, we will deduce the required equality \eqref{3.1} from Proposition~\ref{p1} with the help of certain operators of flattening  and sewing of the vector bundle $\pi: V \rightarrow\Gamma$. Note that the pair of Sobolev spaces on the right of \eqref{3.1} is regular (see, e.g., \cite[Chapter~4, Proposition~1.2]{Wells}).

Define the flattening operator by the formula
\begin{equation*}
T: u\mapsto ((\chi_1 u^{\times}_{1})\circ\alpha_1,\ldots,
(\chi_\varkappa u^{\times}_{\varkappa})\circ\alpha_\varkappa)
\end{equation*}
for arbitrary $u\in \mathcal{D}'(\Gamma, V)$. Recall that
$u^{\times}_j$ stands for the representation of the generalized section $u$ in the local trivialization  $\beta_j$.
The mapping $T$ sets isometrical operators
\begin{equation}\label{3.3}
T: H^{\varphi}(\Gamma, V)\rightarrow (H^{\varphi}(\mathbb{R}^n))^{p\varkappa}
\end{equation}
and
\begin{equation*}
T: H^{(s)}(\Gamma, V)\rightarrow (H^{(s)}(\mathbb{R}^n))^{p\varkappa} \quad \mbox{for every} \quad s \in \mathbb{R}.
\end{equation*}
The latter implies that $T$ is a bounded operator
\begin{equation}\label{3.4}
T: \bigl[H^{(s_0)}(\Gamma, V),H^{(s_1)}(\Gamma, V)\bigr]_{\psi}\rightarrow \bigl[(H^{(s_0)}(\mathbb{R}^n))^{p\varkappa}, (H^{(s_1)}(\mathbb{R}^n))^{p\varkappa}\bigr]_{\psi}.
\end{equation}
Owing to Propositions~\ref{p1} and \ref{p2}, we have
\begin{equation}\label{3.5}
\begin{aligned}
\bigl[(H^{(s_0)}(\mathbb{R}^n))^{p\varkappa}, (H^{(s_1)}(\mathbb{R}^n))^{p\varkappa}\bigr]_{\psi} & = \bigl([H^{(s_0)}(\mathbb{R}^n),H^{(s_1)}(\mathbb{R}^n)]_{\psi}\bigr)^{p\varkappa} \\
&= \bigl(H^{\varphi}(\mathbb{R}^n)\bigr)^{p\varkappa}.
\end{aligned}
\end{equation}
Hence, the bounded operator \eqref{3.4} acts between the spaces \begin{equation}\label{3.6}
T:[H^{(s_0)}(\Gamma, V),H^{(s_1)}(\Gamma, V)]_{\psi}\rightarrow \bigl(H^{\varphi}(\mathbb{R}^n)\bigr)^{p\varkappa}.
\end{equation}

Define the sewing operator by the formula
\begin{equation}\label{3.7}
K: \mathbf{w} \mapsto \sum_{j=1}^\varkappa \Theta_j ((\eta_j w_j)\circ\alpha_j^{-1})
\end{equation}
for every $\mathbf{w} := (w_{1}, \ldots, w_{\varkappa}) \in (\mathcal{S}'(\mathbb{R}^n))^{p\varkappa}$. Here, for each $j\in\{1, \ldots, \varkappa\}$, the function $\eta_j \in C^\infty_0(\mathbb{R}^n)$ is chosen so that $\eta_j = 1$ in a neighbourhood of $\alpha^{-1}_j(\mathrm{supp} \chi_j)$. (As usual, $C^\infty_0(\mathbb{R}^n)$ is the space of all compactly supported $C^\infty$-functions on $\mathbb{R}^n$.) Moreover, given $\omega\in(\mathcal{D}'(\Gamma_j))^p$ subject to $\mathrm{dist}(\mathrm{supp}\,\omega,\partial\Gamma_j)>0$,
we let $\Theta_j\omega$ denote the unique generalized section of the vector bundle $\pi:V\to\Gamma$ such that $(\Theta_j\omega)^{\times}_{j}=\omega$ on $\Gamma_{j}$ and that $(\Theta_j\omega)^{\times}_{j}=0$ on $\Gamma\setminus\overline{\Gamma}_{j}$, with $(\Theta_j\omega)^{\times}_{j}$ standing for the representation of $\Theta_j\omega$ in the local trivialization $\beta_{j}$. We take $\omega:=(\eta_j w_j)\circ\alpha_j^{-1}$ in \eqref{3.7}.

The operator $K$ is left inverse to $T$. Indeed, given $u\in \mathcal{D}'(\Gamma, V)$, we have the following equalities:
\begin{align*}
KTu&=\sum_{j=1}^\varkappa \Theta_j\Bigl(\bigl(\eta_j ((\chi_j u^{\times}_{j})\circ\alpha_j)\bigr)\circ\alpha_j^{-1}\Bigr)\\
&= \sum_{j=1}^\varkappa \Theta_j\Bigl(\bigl(\eta_j\circ\alpha_j^{-1}\bigr) (\chi_j u^{\times}_{j})\Bigr)=\sum_{j=1}^\varkappa \Theta_j (\chi_j u^{\times}_{j})=
\sum_{j=1}^\varkappa \chi_j u = u.
\end{align*}

Let us show that $K$ is a bounded operator
\begin{equation}\label{3.14}
K: (H^\varphi(\mathbb{R}^n))^{p\varkappa}\to H^\varphi(\Gamma,V).
\end{equation}
Considering arbitrary $\mathbf{w}:=(w_{1},\ldots,w_{\varkappa})\in (H^\varphi(\mathbb{R}^n))^{p\varkappa}$ and using the definition of the norm in $H^\varphi(\Gamma, V)$, we have the equalities:
\begin{align*}
\| K\mathbf{w} \|_{H^\varphi(\Gamma,V)}^2 &=
\sum_{j=1}^\varkappa \|\bigl(\chi_j(K\mathbf{w})^{\times}_j\bigr)\circ\alpha_j\|_
{(H^\varphi(\mathbb{R}^n))^p}^2 \\
&= \sum_{j=1}^\varkappa\,\Bigl\|\sum_{l=1}^\varkappa \Bigl(\chi_j\bigl(\Theta_l((\eta_lw_l)\circ
\alpha_l^{-1})\bigr)^{\times}_j\Bigr)
\circ\alpha_j\Bigr\|_{(H^\varphi(\mathbb{R}^n))^p}^2,
\end{align*}
with $(\cdot)^{\times}_j$ denoting the representation of the generalized section (written between the parentheses) in the local trivialization $\beta_j$.
Here,
\begin{align*}
\Bigl(\chi_j\bigl(\Theta_l((\eta_lw_l)\circ
\alpha_l^{-1})\bigr)^{\times}_j\Bigr)
\circ\alpha_j & =
\Bigl(\Theta_{j,l}\bigl(\chi_j\beta_{j,l}((\eta_lw_l)\circ
\alpha_l^{-1})\bigr)\Bigr)\circ\alpha_j \\
& = (\eta_{l,j} w_l)\circ\gamma_{l,j},
\end{align*}
where $\Theta_{j,l}$ stands for the operator  of the extension by zero from $\Gamma_{j}\cap\Gamma_{l}$ to $\Gamma$ of a vector (or matrix)-valued distribution which is defined on $\Gamma_{j}\cap\Gamma_{l}$ and whose support does not abut on $\partial(\Gamma_{j}\cap\Gamma_{l})$, whereas the matrix-valued function $\beta_{l,j}\in C^\infty(\Gamma_{j}\cap\Gamma_{l}, \mathbb{C}^{p\times p})$ is defined by the formula $(x,\beta_{l,j}(x)a):=(\beta_l\circ\beta_j^{-1})(x,a)$ for arbitrary $x\in \Gamma_{j}\cap\Gamma_{l}$ and $a\in \mathbb{C}^p$. Besides,
$$
\eta_{l,j}:=\bigl(\Theta_{j,l}((\chi_j\beta_{j,l})
(\eta_l\circ\alpha_l^{-1}))\bigr)\circ\alpha_{l}
\in C^\infty_0(\mathbb{R}^n, \mathbb{C}^{p\times p}),
$$
and $\gamma_{l,j}:= \mathbb{R}^n \rightarrow \mathbb{R}^n$ is an infinitely smooth diffeomorphism such that $\gamma_{l,j}:= \alpha_l^{-1}\circ\alpha_j$ in a neighbourhood of $\mathrm{supp}\,\eta_{l,j}$ and that $\gamma_{l,j}(t)= t$ for every $t\in \mathbb{R}^n$ subject to $|t|\gg1$. Thus,
\begin{align*}
\|K\mathbf{w}\|_{{H^\varphi(\Gamma,V)}}^2 &=
\sum_{j=1}^\varkappa\,\Bigl\|\sum_{l=1}^\varkappa
(\eta_{l,j} w_l)\circ\gamma_{l,j} \Bigr\|_{(H^\varphi(\mathbb{R}^n))^p}^2\\
&\leq c \sum_{l=1}^\varkappa\|w_l\|_{(H^\varphi(\mathbb{R}^n))^p}^2=
c\,\|\mathbf{w}\|_{(H^\varphi(\mathbb{R}^n))^{p\varkappa}}^2\,,
\end{align*}
where $c$ is a certain positive number which does not depend on $\mathbf{w}$.
The last inequality is a consequence of the fact, that the operator of the multiplication by a function from  $C^{\infty}_{0}(\mathbb{R}^n)$ and the operator $v\mapsto v\circ\gamma_{l,j}$ of $C^{\infty}$-change of variables are bounded on the space $H^\varphi(\mathbb{R}^n)$. Such properties of this space follow from their known analogues for Sobolev spaces over $\mathbb{R}^n$ in view of Proposition~1. Thus, $K$ is a bounded operator between  spaces~\eqref{3.14}.

Specifically, $K$ acts continuously on the pair of spaces
\begin{equation*}
K: ((H^{(s)}(\mathbb{R}^n))^{p\varkappa} \rightarrow H^{(s)}(\Gamma, V) \quad \mbox{for every}\quad s \in \mathbb{R}.
\end{equation*}
It follows from this by Propositions~\ref{p1} and formula \eqref{3.5} that the operator $K$ is also bounded between the spaces
\begin{equation}\label{3.15}
K: (H^{\varphi}(\mathbb{R}^n))^{p\varkappa}  \rightarrow [H^{(s_0)}(\Gamma, V),H^{(s_1)}(\Gamma, V)]_{\psi}.
\end{equation}

Now, the continuous embedding
\begin{equation*}
H^\varphi(\Gamma, V)\hookrightarrow
[H^{(s_0)}(\Gamma, V),H^{(s_1)}(\Gamma, V)]_{\psi}.
\end{equation*}
is the product of the bounded operators \eqref{3.15} and \eqref{3.3}, with the inverse being the product of the bounded operators \eqref{3.14} and \eqref{3.6}.
\end{proof}


\begin{proof}[Proof of Theorem $\ref{t4.2a}$]
This theorem is known in the Sobolev case where $\varphi(t)\equiv t^{s}$ for some $s\in\mathbb{R}$ (see, e.g, \cite[Chapter~IV, Section~1]{Wells}). In the general situation, this theorem follows from the Sobolev case by virtue of Theorem $\ref{t4.1}$. Namely, according to this theorem, the space $H^{\varphi}(\Gamma, V)$ is complete (i.e. Hilbert) and separable due to the properties of the interpolation with a function parameter mentioned in Section \ref{sec3}. Besides, owing to \eqref{3.18},
we have the continuous and dense embedding $H^{(s_1)}(\Gamma, V)\hookrightarrow H^{\varphi}(\Gamma, V)$ if $s_1>\sigma_{0}(\varphi)$. Since the set $C^{\infty}(\Gamma, V)$ is dense in $H^{(s_1)}(\Gamma, V)$, it is also dense in  $H^{\varphi}(\Gamma, V)$ due to this embedding.
\end{proof}

\begin{proof}[Proof of Theorem $\ref{t4.2b}$]
Consider two triplets $\mathcal{A}_{1}$ and $\mathcal{A}_{2}$ each of which is formed by an atlas of the manifold $\Gamma$, appropriate partition of unity on $\Gamma$, and collection of local trivializations of the vector bundle  $\pi: V\rightarrow\Gamma$. Let $H^{\varphi}(\Gamma, V; \mathcal{A}_{j})$ and $H^{(s)}(\Gamma, V; \mathcal{A}_{j})$, with  $s\in\mathbb{R}$, respectively denote the H\"ormander space $H^{\varphi}(\Gamma, V)$ and the Sobolev space $H^{(s)}(\Gamma, V)$ corresponding to the triplet $\mathcal{A}_{j}$ with $j\in\{1,\,2\}$.
The conclusion of Theorem~\ref{t4.2b} holds true in  the Sobolev case of $\varphi\equiv t^s$ (see, e.g., \cite[Chapter~IV, Section~1, p.~110]{Wells}); i.e., the identity mapping sets an isomorphism between the spaces $H^{(s)}(\Gamma,V;\mathcal{A}_{1})$ and $H^{(s)}(\Gamma, V; \mathcal{A}_{2})$ for every $s\in\mathbb{R}$. It follows from this by Theorem~\ref{t4.1} that the identity mapping also sets an isomorphism between the spaces
\begin{equation*}
\bigl[H^{(s_0)}(\Gamma,V;\mathcal{A}_{j}),
H^{(s_1)}(\Gamma, V;\mathcal{A}_{j})\bigr]_\psi=
H^{\varphi}(\Gamma, V;\mathcal{A}_{j})
\end{equation*}
with $j=0$ and $j=1$; here, $s_0$, $s_1$, and $\psi$ satisfy the hypotheses of this theorem.
\end{proof}

\begin{proof}[Proof of Theorem~$\ref{t4.3}$]
We choose numbers $s_0, s_1 \in \mathbb{R}$ such that $s_0 <\min\{\sigma_0(\varphi_1), \sigma_0(\varphi_2)\}$ and $ s_1> \max\{\sigma_1(\varphi_1), \sigma_1(\varphi_2)\}$. According to Theorem~\ref{t4.1},
\begin{equation}\label{3.16}
[H^{(s_0)}(\Gamma, V),H^{(s_1)}(\Gamma, V)]_{\psi_j}= H^{\varphi_j}(\Gamma, V)\quad\mbox{for each}\quad j\in\{1, 2\}
\end{equation}
up to equivalence of norms. Here, $\psi_j$ is the interpolation parameter defined by formula \eqref{3.30} in which we take $\varphi := \varphi_j$ and $\psi_j:=\psi$.

If the function $\varphi_1/\varphi_2$ is bounded in a neighbourhood of infinity, then the function $\psi_1(t)/\psi_2(t) = \varphi_1(t^{1/(s_1-s_0)})/\varphi_2(t^{1/(s_1-s_0)})$ of $t\geq 1$ is also bounded there, and then the dense continuous embedding $H^{\varphi_2}(\Gamma, V) \hookrightarrow H^{\varphi_1}(\Gamma, V)$ holds due to Proposition~\ref{p6}. If $\varphi_1(t)/\varphi_2(t)\rightarrow0$ as $t\rightarrow\infty$, then $\psi_1(t)/\psi_2(t) \rightarrow 0$ as $t\rightarrow\infty$, and then this embedding is compact due to the same proposition and the known compact embedding $H^{(s_1)}(\Gamma, V) \hookrightarrow H^{(s_0)}(\Gamma, V)$ (see, e.g., \cite[Chapter~4, Section~1, Proposition~1.2]{Wells}).

Assume now that $H^{\varphi_2}(\Gamma, V) \subset H^{\varphi_1}(\Gamma, V)$, and prove that the function $\varphi_1/\varphi_2$ is bounded on $[1,\infty)$. Without loss of generality we suppose that $\Gamma_1\not\subset \cup_{j=2}^\varkappa \Gamma_j$. Choose an open nonempty set  $U\subset\Gamma_1$ such that $U\cap\Gamma_j=\emptyset$ whenever $j\neq1$. Consider an arbitrary distribution $\omega\in H^{\varphi_2}(\mathbb{R}^n)$ such that $\mathrm{supp}\,\omega\subset\alpha^{-1}_1(U)$.
According to \eqref{3.14} and our assumption, we have the inclusion
\begin{equation*}
u:=K(\omega, \underbrace{0, \ldots, 0}_{p\varkappa-1}) \in H^{\varphi_2}(\Gamma, V) \subset H^{\varphi_1}(\Gamma, V);
\end{equation*}
here, $K$ is the sewing operator from the proof of Theorem~\ref{t4.1}.
Therefore
\begin{equation*}
(\omega, \underbrace{0, \ldots, 0}_{p-1}) = (\chi_1 u_1)\circ\alpha_1 \in (H^{\varphi_1}(\mathbb{R}^n))^p
\end{equation*}
due to the definition of $H^{\varphi_1}(\Gamma, V)$. (As to the latter equality, note that $\chi_1 u_1 = u_1$ because $\chi_1 = 1$ on $U$). Thus,
\begin{equation*}
\bigl\{\omega\in H^{\varphi_2}(\mathbb{R}^n): \mathrm{supp}\,\omega\subset \alpha_1^{-1}(U)\bigr\} \subset H^{\varphi_1}(\mathbb{R}^n).
\end{equation*}
It follows from this by \cite[Theorem 2.2.2]{Hermander63} that the function $\varphi_1(\langle\xi\rangle)/\varphi_2(\langle\xi\rangle)$ of $\xi\in \mathbb{R}^n$ is bounded on $\mathbb{R}^n$. Therefore, the function $\varphi_1/\varphi_2$ is bounded on $[1,\infty)$.

Assume in addition that the embedding $H^{\varphi_2}(\Gamma, V) \hookrightarrow H^{\varphi_1}(\Gamma, V)$ is compact, and prove that $\varphi_1(t)/\varphi_2(t)\rightarrow0$ as $t\rightarrow\infty$.
Choose a closed ball $Q\subset \alpha^{-1}_1(U)$, and put
\begin{equation*}
H^{\varphi_2}_Q(\mathbb{R}^n):= \bigl\{\omega\in H^{\varphi_2}(\mathbb{R}^n): \mathrm{supp}\,\omega\subset Q\bigr\}.
\end{equation*}
We consider $H^{\varphi_2}_Q(\mathbb{R}^n)$ as a (closed) subspace of $H^{\varphi_2}(\mathbb{R}^n)$. Let us represent the embedding  $H^{\varphi_2}_Q(\mathbb{R}^n) \hookrightarrow H^{\varphi_1}(\mathbb{R}^n)$ as the composition of the following three continuous operators:
\begin{equation*}
\omega\mapsto u:= K(\omega, \underbrace{0, \ldots, 0}_{p\varkappa-1}) \mapsto u \mapsto (\chi_1 u_1)\circ\alpha_1 =
(\omega, \underbrace{0, \ldots, 0}_{p-1}).
\end{equation*}
The first of them acts from $H^{\varphi_2}_Q(\mathbb{R}^n)$ to $H^{\varphi_2}(\Gamma, V)$ by \eqref{3.14}; the second is the operator of the compact embedding of $H^{\varphi_2}(\Gamma, V)$ in $H^{\varphi_1}(\Gamma, V)$, and the third acts from  $H^{\varphi_1}(\Gamma, V)$ to $H^{\varphi_1}(\mathbb{R}^n)$. Thus, the embedding $H^{\varphi_2}_Q(\mathbb{R}^n) \hookrightarrow H^{\varphi_1}(\mathbb{R}^n)$ is compact.
It follows from this by \cite[Theorem 2.2.3]{Hermander63} that $\varphi_1(\langle\xi\rangle)/\varphi_2(\langle\xi\rangle)\rightarrow0$ as $|\xi|\rightarrow\infty$; i.e., $\varphi_1(t)/\varphi_2(t)\rightarrow0$ as $t\rightarrow\infty$.
\end{proof}

\begin{proof}[Proof of Theorem~$\ref{t4.4}$]
The inclusion $\varphi\in\mathrm{OR}$ is valid due to \cite[Theorem 2.18]{MikhailetsMurach14}.
Choose numbers $s_0, s_1 \in \mathbb{R}$ such that
\begin{equation*}
s_0 <\min\{\sigma_0(\varphi), \sigma_0(\varphi_1), \sigma_0(\varphi_2)\} \quad \mbox{and}\quad
s_1> \max\{\sigma_1(\varphi), \sigma_1(\varphi_1), \sigma_1(\varphi_2)\}.
\end{equation*}
The pair $[H^{\varphi_1} (\Gamma, V), H^{\varphi_2}(\Gamma, V)]$ is regular due to Theorems \ref{t4.2a} and~\ref{t4.3}.
Let us use the interpolation formula \eqref{3.16}, where the function $\psi_1 / \psi_2$ is bounded in a neighbourhood of infinity. Owing to Proposition~\ref{p7}, we have
\begin{align}
&\bigl[H^{\varphi_1} (\Gamma, V), H^{\varphi_2}(\Gamma, V)\bigr]_{\psi}\notag\\
& = \bigl[\bigl[H^{(s_0)} (\Gamma, V), H^{(s_1)}(\Gamma, V)\bigr]_{\psi_1}, \bigl[H^{(s_0)} (\Gamma, V), H^{(s_1)}(\Gamma, V)\bigr]_{\psi_2}\bigr]_{\psi}\notag\\
& = \bigl[H^{(s_0)} (\Gamma, V), H^{(s_1)}(\Gamma, V)\bigr]_{\omega}.\label{3.17}
\end{align}
Here, the interpolation parameter $\omega$ satisfies the equalities
\begin{align*}
\omega(t) & = \psi_1(t)\psi\Bigl(\frac{\psi_2(t)}{\psi_1(t)}\Bigr) = t^{-s_0/(s_1-s_0)}\varphi_1(t^{1/(s_1-s_0)})
\psi\Bigl(\frac{\varphi_2(t^{1/(s_1-s_0)})}{\varphi_1(t^{1/(s_1-s_0)})}\Bigr)\\
& = t^{-s_0/(s_1-s_0)}\varphi (t^{1/(s_1-s_0)})
\end{align*}
whenever $t\geq1$. Besides,
\begin{align*}
\omega(t) & = \psi_1(t)\psi\Bigl(\frac{\psi_2(t)}{\psi_1(t)}\Bigr) = \varphi_1(1)\psi\Bigl(\frac{\varphi_2(1)}{\varphi_1(1)}\Bigr) = \varphi(1)
\end{align*}
whenever $0<t<1$.
Thus, equality \eqref{3.30} holds true if we take $\omega$ instead of $\psi$.
Therefore,
\begin{equation}\label{3.19}
\bigl[H^{(s_0)} (\Gamma, V), H^{(s_1)}(\Gamma, V)\bigr]_{\omega} = H^{\varphi} (\Gamma, V)
\end{equation}
due to Theorem \ref{t4.1}. Relations \eqref{3.17} and \eqref{3.19} yield the required formula \eqref{3.111111}.
\end{proof}

\begin{proof}[Proof of Theorem $\ref{t4.5}$]
According to Proposition \ref{p4}, a Hilbert space $H$ is an interpolation space between the spaces $H^{(s_0)} (\Gamma, V)$ and  $H^{(s_1)}(\Gamma, V)$ if and only if $H$ equals the space $[H^{(s_0)} (\Gamma, V), H^{(s_1)}(\Gamma, V)]_{\psi}$ for a certain interpolation parameter $\psi\in\mathcal{B}$. Owing to Theorem~\ref{t4.4} considered in the case of $\varphi_j(t) \equiv t^{s_j}$, the last space equals
$ H^{\varphi}(\Gamma, V)$ provided that the function  $\varphi\in\mathrm{OR}$ is defined by the formula $\varphi(t) \equiv t^{s_0}\psi(t^{s_1-s_0})$. These equalities of spaces hold up to equivalence of norms. It remains to note in view of \cite[Theorem~4.2]{MikhailetsMurach15ResMath1} that $\varphi$ satisfies \eqref{1.1} if and only if the function $\psi\in\mathcal{B}$ is an interpolation parameter.
\end{proof}

\begin{proof}[Proof of Theorem $\ref{t4.6}$]
This theorem is known in the Sobolev case where $\varphi(t)\equiv t^s$ for some $s\in\mathbb{R}$ (see, e.g., \cite[Chapter~IV, Section~1, p.~110]{Wells}). Thus, the continuous sesquilinear form $u \mapsto\langle u, v\rangle_{\Gamma, V}$ of the arguments $u\in H^{s}(\Gamma, V)$ and $v\in H^{-s}(\Gamma, V)$ is well defined as a unique extension of the form \eqref{4.332}. Moreover, the mapping  $Q: u \mapsto\langle u, \cdot\rangle_{\Gamma, V}$ is an isomorphism between $H^{s}(\Gamma, V)$ and $(H^{-s}(\Gamma, V))'$. Let deduce this theorem from the Sobolev case with the help of Proposition~\ref{p5}.
Choose numbers $s_0, s_1 \in \mathbb{R}$ such that $s_0 <\sigma_0(\varphi)$ and $s_1> \sigma_1(\varphi)$.
Considering this isomorphism for $s \in \{s_0, s_1\}$ and using the interpolation with the function parameter $\psi$ defined by \eqref{3.30}, we conclude by Theorem~\ref{t4.1} that $Q$ sets an isomorphism between spaces \eqref{2.22} and
\begin{equation}\label{7.3}
\bigl[(H^{(-s_0)} (\Gamma, V))', (H^{(-s_1)}(\Gamma, V))'\bigr]_\psi.
\end{equation}
According to Proposition~\ref{p5} and Theorem~\ref{t4.4}, space \eqref{7.3} equals
\begin{equation*}
\bigl[H^{(-s_1)} (\Gamma, V), H^{(-s_0)}(\Gamma, V)\bigr]_\chi'= (H^{1/\varphi}(\Gamma, V))',
\end{equation*}
where $\chi(t)\equiv t/\psi(t)$. We have used Theorem~\ref{t4.4} in the case where $\varphi_1(t)\equiv t^{-s_1}$ and $\varphi_2(t)\equiv t^{-s_0}$; note that
\begin{equation*}
\varphi_1(t)\chi\Bigl(\frac{\varphi_2(t)}{\varphi_1(t)}\Bigr) = t^{-s_1}\chi(t^{s_1-s_0}) = \frac{1}{t^{s_0}\psi(t^{s_1-s_0})} = \frac{1}{\varphi(t)}
\end{equation*}
whenever $t\geq1$, the last equality being valid due to \eqref{3.30}. Thus, $Q$ sets an isomorphism between $H^{\varphi}(\Gamma,V)$ and $(H^{1/\varphi} (\Gamma,V))'$.
\end{proof}

The proof of Theorem \ref{t4.7} is based on the following version of H\"ormander's embedding theorem \cite[Theorem 2.2.7]{Hermander63}:

\begin{proposition}\label{p3}
Let $\varphi\in\mathrm{OR}$. Then condition \eqref{4.15}
implies the continuous embedding $H^{\varphi}(\mathbb{R}^n)\hookrightarrow C^{q}_\mathrm{b}(\mathbb{R}^n)$. Conversely, if
\begin{equation}\label{6.1}
\{w\in H^{\varphi}(\mathbb{R}^n): \mathrm{supp}\,w\subset G\} \subset C^q(\mathbb{R}^n)
\end{equation}
for a certain open nonempty set $G\subset\mathbb{R}^n$, then condition~\eqref{4.15} is satisfied.
\end{proposition}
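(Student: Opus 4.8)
The plan is to deduce both implications from the elementary equivalence between \eqref{4.15} and the finiteness of the single weighted integral $I_q:=\int_{\mathbb{R}^n}\langle\xi\rangle^{2q}\varphi^{-2}(\langle\xi\rangle)\,d\xi$. I would establish this first by passing to polar coordinates and substituting $t=\langle\xi\rangle$: the integral $I_q$ becomes a constant multiple of $\int_1^\infty t^{2q+1}(t^2-1)^{(n-2)/2}\varphi^{-2}(t)\,dt$, whose convergence at infinity is governed precisely by $\int_1^\infty t^{2q+n-1}\varphi^{-2}(t)\,dt$, the behaviour near $t=1$ being harmless because $\varphi$ is locally bounded away from $0$ (for $n=1$ the factor $(t^2-1)^{-1/2}$ is integrable near $t=1$). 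Thus \eqref{4.15} $\Leftrightarrow I_q<\infty$, and everything reduces to working with $I_q$.

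For the \emph{direct} implication I would argue by Cauchy--Schwarz. Since $|\xi^\alpha|\le\langle\xi\rangle^q$ for $|\alpha|\le q$, every $w\in H^{\varphi}(\mathbb{R}^n)$ satisfies
\[
\int_{\mathbb{R}^n}|\xi^\alpha\widehat{w}(\xi)|\,d\xi\le
\Bigl(\int_{\mathbb{R}^n}\langle\xi\rangle^{2q}\varphi^{-2}(\langle\xi\rangle)\,d\xi\Bigr)^{1/2}
\Bigl(\int_{\mathbb{R}^n}\varphi^{2}(\langle\xi\rangle)|\widehat{w}(\xi)|^2\,d\xi\Bigr)^{1/2}
=I_q^{1/2}\,\|w\|_{H^{\varphi}(\mathbb{R}^n)}.
\]
Hence $\xi^\alpha\widehat{w}\in L^1(\mathbb{R}^n)$, so by Fourier inversion $\partial^\alpha w$ is a bounded continuous function with $\|\partial^\alpha w\|_\infty\lesssim I_q^{1/2}\|w\|_{H^{\varphi}(\mathbb{R}^n)}$; summing over $|\alpha|\le q$ yields the continuous embedding $H^{\varphi}(\mathbb{R}^n)\hookrightarrow C^q_{\mathrm b}(\mathbb{R}^n)$.

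The \emph{converse} is the heart of the matter, and I would first convert the qualitative inclusion \eqref{6.1} into a quantitative bound. Fix a closed ball $\overline{B}\subset G$, centred (after translation) at $0$, and let $H^{\varphi}_{\overline B}$ be the closed subspace of $w\in H^{\varphi}(\mathbb{R}^n)$ with $\mathrm{supp}\,w\subset\overline B$. Because $H^{\varphi}(\mathbb{R}^n)$ embeds continuously into $\mathcal{D}'(\mathbb{R}^n)$, convergence in $H^{\varphi}$ forces convergence in $\mathcal{D}'$, so the inclusion $H^{\varphi}_{\overline B}\hookrightarrow C^q(\overline B)$ granted by \eqref{6.1} has closed graph; the closed graph theorem then produces a constant $C$ with $|\partial^\alpha w(0)|\le C\|w\|_{H^{\varphi}(\mathbb{R}^n)}$ for all $w\in H^{\varphi}_{\overline B}$ and $|\alpha|\le q$. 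I would then test this against a family resonating with an entire frequency band. Fixing $|\alpha|=q$ and $R>0$, set $\widehat{f_R}(\xi):=\xi^{\alpha}\varphi^{-2}(\langle\xi\rangle)\mathbf{1}_{\{|\xi|\le R\}}$ (so $f_R\in H^{\varphi}$ is smooth) and $w_R:=\chi f_R$, where $\chi\in C^\infty_0$ is supported in the interior of $\overline B$ and $\chi\equiv1$ near $0$. Since all derivatives of $\chi$ vanish at $0$, Leibniz' rule gives $\partial^\alpha w_R(0)=\partial^\alpha f_R(0)$, which equals a fixed nonzero constant times $J_{R,\alpha}:=\int_{|\xi|\le R}\xi^{2\alpha}\varphi^{-2}(\langle\xi\rangle)\,d\xi$; meanwhile $\|f_R\|_{H^{\varphi}}^2=J_{R,\alpha}$, and boundedness of multiplication by $\chi$ on $H^{\varphi}(\mathbb{R}^n)$ yields $\|w_R\|_{H^{\varphi}}\le C_\chi J_{R,\alpha}^{1/2}$. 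The functional bound then reads $(\mathrm{const})\,J_{R,\alpha}\le CC_\chi J_{R,\alpha}^{1/2}$, forcing $J_{R,\alpha}$ to stay bounded uniformly in $R$; letting $R\to\infty$ and summing over $|\alpha|=q$ (using $\sum_{|\alpha|=q}\xi^{2\alpha}\asymp|\xi|^{2q}$) gives $\int_{\mathbb{R}^n}|\xi|^{2q}\varphi^{-2}(\langle\xi\rangle)\,d\xi<\infty$, that is $I_q<\infty$, which is \eqref{4.15}.

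The delicate point, and the main obstacle, is precisely the choice of test functions in the converse. A naive test with a single modulated bump $\chi(x)e^{ix\cdot\eta}$ yields only the \emph{pointwise} bound $\langle\eta\rangle^q\lesssim\varphi(\langle\eta\rangle)$, which is far too weak: it permits $\varphi(t)=t^q$, for which \eqref{4.15} fails. The crucial idea is therefore to superpose a whole band of frequencies through $f_R$, thereby converting the pointwise functional bound into the integral $J_{R,\alpha}$. Two facts make this succeed and should be highlighted: that $\chi\equiv1$ near the base point makes $\partial^\alpha w_R(0)$ reproduce $\partial^\alpha f_R(0)$ \emph{exactly}, and that multiplication by $\chi\in C^\infty_0$ is bounded on $H^{\varphi}(\mathbb{R}^n)$ (a property already used in the proof of Theorem~\ref{t4.1}), so that the spatial localization needed to respect the support constraint does not inflate the $H^{\varphi}$-norm beyond that of $f_R$.
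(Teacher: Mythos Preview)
Your argument is correct. The direct implication via Cauchy--Schwarz is the standard route, and your converse---closed graph to extract a quantitative point-evaluation bound, followed by testing against $w_R=\chi f_R$ with $\widehat{f_R}(\xi)=\xi^{\alpha}\varphi^{-2}(\langle\xi\rangle)\mathbf{1}_{\{|\xi|\le R\}}$ to turn that bound into control of $J_{R,\alpha}$---is a clean and self-contained way to recover the integral condition; the observations that $\chi\equiv1$ near the origin makes $\partial^{\alpha}w_R(0)=\partial^{\alpha}f_R(0)$ and that multiplication by $\chi$ is bounded on $H^{\varphi}(\mathbb{R}^n)$ are exactly the ingredients needed.

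This is, however, a genuinely different route from the paper. The paper does not prove Proposition~\ref{p3} directly: it simply invokes H\"ormander's embedding theorem \cite[Theorem~2.2.7]{Hermander63} for the spaces $\mathcal{B}_{2,k}$, which is stated in terms of the condition $\int_{\mathbb{R}^n}(1+|\xi|)^{2q}k^{-2}(\xi)\,d\xi<\infty$, and then notes that this condition is equivalent to \eqref{4.15} when $k(\xi)=\varphi(\langle\xi\rangle)$, the equivalence being taken from \cite[Lemma~2]{ZinchenkoMurach12UMJ11}. So the paper's ``proof'' is two citations, whereas you have effectively reproved both H\"ormander's theorem (in the relevant case $p=2$) and the polar-coordinate equivalence of the two integral conditions from scratch. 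Your approach buys self-containment and an explicit mechanism for the converse; the paper's approach buys brevity and places the result in its classical context.
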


Here, $C^{q}_\mathrm{b}(\mathbb{R}^n)$ denotes the Banach space of all $q$ times continuously differentiable functions on $\mathbb{R}^n$ whose partial derivatives up to the $q$-th order are bounded on $\mathbb{R}^n$. Note that the condition
\begin{equation*}
\int \limits_{\mathbb{R}^n}\frac{(1+|\xi|)^{2q}}{k^2(\xi)} \, d\xi<\infty
\end{equation*}
used by H\"ormander for the space $\mathcal{B}_{2,k}$ is equivalent to \eqref{4.15} in our case where $k(\xi)\equiv\varphi(\langle\xi\rangle)$ and, hence, $\mathcal{B}_{2,k}=H^\varphi(\mathbb{R}^n)$. This equivalence is proved in  \cite[Lema~2]{ZinchenkoMurach12UMJ11}.

\begin{proof}[Proof of Theorem $\ref{t4.7}$]
Suppose that condition~\eqref{4.15} is satisfied. Then we have the continuous embedding $H^\varphi(\mathbb{R}^n)\hookrightarrow C^q_b(\mathbb{R}^n)$ due to Proposition~\ref{p3}.
Choosing $u\in H^\varphi(\Gamma, V)$ arbitrarily, we obtain the inclusion
\begin{equation*}
(\chi_ju_j)\circ\alpha_j\in (H^\varphi(\mathbb{R}^n))^p\hookrightarrow(C^q_b(\mathbb{R}^n))^p
\end{equation*}
for each $j\in\{1,\ldots, \varkappa\}$. Hence, each $\chi_ju\in C^q(\Gamma, V)$, which implies the inclusion
\begin{equation*}
u = \sum_{j=1}^\varkappa \chi_j u \in C^q(\Gamma, V).
\end{equation*}
Thus, $H^\varphi(\Gamma, V)\subset C^q(\Gamma,V)$. This embedding is continuous because
\begin{equation*}
\|u\|_{C^q(\Gamma,V)}\leq c_0 \sum_{j=1}^\varkappa \|(\chi_ju_j)\circ\alpha_j\|_{(C^q_b(\mathbb{R}^n))^p} \leq c_1 \sum_{j=1}^\varkappa \|(\chi_ju_j)\circ\alpha_j\|_{(H^\varphi(\mathbb{R}^n))^p} \leq c_2 \|u\|_{H^\varphi(\Gamma, V)}.
\end{equation*}
Here, $c_0$, $c_1$, and $c_2$ are certain positive numbers that do not depend on $u$.

Let us prove that this embedding is compact. In the next paragraph, we will build a function $\varphi_0\in \mathrm{OR}$ such that $\varphi_0(t)/\varphi(t)\rightarrow0$ as $t\rightarrow\infty$ and that
\begin{equation}\label{4.15-0}
\int\limits_1^\infty \frac{t^{2q+n-1}}{\varphi_0^2(t)} \, dt<\infty.
\end{equation}
Owing to Theorem \ref{t4.3}, we have the compact embedding $H^\varphi(\Gamma, V)\hookrightarrow H^{\varphi_0}(\Gamma, V)$. Besides, the continuous embedding $H^{\varphi_0}(\Gamma, V)\hookrightarrow C^q(\Gamma,V)$ holds due to \eqref{4.15-0}, as we have proved. Thus, the embedding $H^\varphi(\Gamma, V)\hookrightarrow C^q(\Gamma,V)$ is compact.

Let us build the indicated function $\varphi_0$. Put
\begin{equation*}
\eta(t) := \int\limits_t^\infty\frac{\tau^{2q+n-1}}{\varphi^2(\tau)}\,d\tau<\infty
\quad\mbox{and}\quad
\varphi_0(t) := \varphi(t) \sqrt[4]{\eta(t)}
\end{equation*}
for every $t\geq1$. Then $\varphi_0(t)/\varphi(t)=\sqrt[4]{\eta(t)}\rightarrow0$ as $t\rightarrow\infty$. Moreover,
\begin{equation*}
\int\limits_1^\infty\frac{\tau^{2q+n-1}}{\varphi_0^2(\tau)} \,d\tau =
\int\limits_1^\infty\frac{\tau^{2q+n-1}}{\varphi^2(\tau) \sqrt{\eta(\tau)}}\,d\tau=
- \int\limits_1^\infty\frac{d\eta(\tau)}{\sqrt{\eta(\tau)}} =
\int\limits^{\eta(1)}_0\frac{d\eta}{\sqrt{\eta}}<\infty;
\end{equation*}
i.e., $\varphi_0$ satisfies \eqref{4.15-0}. It remains to show that $\varphi_0\in \mathrm{OR}$.
It suffices to prove that $\eta\in \mathrm{OR}$. Let numbers $a>1$ and $c\geq1$ be the same as that in \eqref{f3}.  Given $\lambda\in [1,a]$ and $t\geq1$, we have
\begin{equation*}
\eta(\lambda t)=\int\limits_{\lambda t}^\infty\frac{\tau^{2q+n-1}}{\varphi^2(\tau)}\,d\tau =
\int\limits_{t}^\infty
\frac{(\lambda\theta)^{2q+n-1}}{\varphi^2(\lambda\theta)}\,
\lambda d\theta.
\end{equation*}
Here, owing to \eqref{f3}, we have the inequalities
\begin{equation*}
\frac{c^{-2}}{\varphi^2(\theta)}
\leq\frac{c^{-2}\lambda^{2q+n}}{\varphi^2(\theta)}
\leq\frac{\lambda^{2q+n}}{\varphi^2(\lambda\theta)}\leq \frac{c^2\lambda^{2q+n}}{\varphi^2(\theta)}\leq \frac{c^2a^{2q+n}}{\varphi^2(\theta)}
\end{equation*}
whenever $\theta\geq1$. Therefore,
\begin{equation*}
c^{-2}\eta(t)\leq\eta(\lambda t)\leq c^2a^{2q+n}\eta(t),
\end{equation*}
whenever $t\geq1$ and $q\in [1,a]$. Thus, $\eta\in \mathrm{OR}$, which implies the required inclusion $\varphi_0\in \mathrm{OR}$.

It is remains to prove that the embedding $H^\varphi(\Gamma, V)\hookrightarrow C^q(\Gamma,V)$ implies condition \eqref{4.15}.
Assume now that $H^\varphi(\Gamma, V)\hookrightarrow C^q(\Gamma,V)$ holds.
Without loss of generality we suppose that $\Gamma_1\not\subset \cup_{j=2}^\varkappa \Gamma_j$. Choose an open nonempty set  $U\subset\Gamma_1$ such that $U\cap\Gamma_j=\emptyset$ whenever $j\neq1$, and put $G:= \alpha^{-1}_1(U)$.
Consider an arbitrary distribution $\omega\in H^{\varphi}(\mathbb{R}^n)$ such that $\mathrm{supp}\,\omega\subset G$.
Owing to \eqref{3.14} and our assumption, we have the inclusion
\begin{equation*}
u:=K(\omega, \underbrace{0, \ldots, 0}_{p\varkappa-1}) \in H^{\varphi}(\Gamma, V) \hookrightarrow C^q(\Gamma,V),
\end{equation*}
with $K$ being the sewing operator from the proof of Theorem~\ref{t4.1}.
Hence,
\begin{equation*}
(\omega, \underbrace{0, \ldots, 0}_{p-1}) = (\chi_1 u_1)\circ\alpha_1 \in (C^q(\mathbb{R}^n))^p.
\end{equation*}
(The latter equality is valid because $\chi_1 = 1$ on $U$). Thus, \eqref{6.1} holds, which implies condition~\eqref{4.15} due to Proposition~\ref{p3}.
\end{proof}

\section{Applications}\label{sec7appl}

We will give applications of the extended Sobolev scale to mixed-order elliptic pseudodifferential operators (PsDOs) on a pair of vector bundles over the closed manifold $\Gamma$.

Let $\pi_1: V_1 \rightarrow\Gamma$ and $\pi_2: V_2 \rightarrow\Gamma$ be infinitely smooth complex vector bundles of the same rank $p\geq1$ on $\Gamma$. Following \cite[Definition 18.1.32]{Hermander07v3}, we let $\Psi^m(\Gamma;V_1,V_2)$ denote the class of all PsDOs $A:C^{\infty}(\Gamma,V_1)\to C^{\infty}(\Gamma,V_2)$ of order $m\in \mathbb{R}$. Note that $\Psi^m(\Gamma;V_1,V_2)\subset \Psi^{r}(\Gamma;V_1,V_2)$ whenever $m<r$, and put
\begin{equation*}
\Psi^{\infty}(\Gamma;V_1,V_2):=
\bigcup_{m\in\mathbb{R}}\Psi^m(\Gamma;V_1,V_2)
\quad\mbox{and}\quad
\Psi^{-\infty}(\Gamma;V_1,V_2):=
\bigcap_{m\in\mathbb{R}}\Psi^m(\Gamma;V_1,V_2).
\end{equation*}
The class $\Psi^{-\infty}(\Gamma;V_1,V_2)$ consists of all integral operators with $C^{\infty}$-kernels (cf. \cite[Section~2.1, p.~23]{Agranovich94}).

Suppose that these vector bundles are Hermitian. Let $\langle u,v\rangle_{\Gamma, V_r}$ denote the corresponding inner product of sections $u,v\in C^\infty(\Gamma,V_r)$, with $r\in\{1,2\}$. Given a PsDO $A\in\Psi^{\infty}(\Gamma;V_1,V_2)$, we define its adjoint PsDO $A^{\ast}\in\Psi^{\infty}(\Gamma;V_2,V_1)$ by the formula $\langle Au,w\rangle_{\Gamma,V_2}=\langle u, A^{\ast}w\rangle_{\Gamma, V_1}$ for all $u\in C^\infty(\Gamma, V_1)$ and $w\in C^\infty(\Gamma, V_2)$. The operator $A^{\ast}$ exists and unique; if
$A\in\Psi^{m}(\Gamma;V_1,V_2)$ for certain $m\in\mathbb{R}$, then $A^{\ast}\in\Psi^{m}(\Gamma; V_2, V_1)$ \cite[Chapter~IV, Theorem~3.16(b)]{Wells}. The mapping $u\mapsto Au$, where $u\in C^{\infty}(\Gamma,V_1)$, extends uniquely to a continuous linear operator from $\mathcal{D}'(\Gamma,V_1)$ to $\mathcal{D}'(\Gamma,V_2)$. Thus, the equality $\langle Au,w\rangle_{\Gamma,V_2}=\langle u, A^{\ast}w\rangle_{\Gamma, V_1}$ extends by continuity over all $u\in\mathcal{D}'(\Gamma,V_1)$ and $w\in C^\infty(\Gamma, V_2)$. This equality defines the image $Au\in\mathcal{D}'(\Gamma,V_2)$ of any generalized section $u\in\mathcal{D}'(\Gamma,V_1)$.

\begin{lemma}\label{lem-PsDO-bounded}
Let $m\in\mathbb{R}$ and $A\in\Psi^{m}(\Gamma;V_1,V_2)$. Then $A$ is a bounded operator on the pair of Hilbert spaces
\begin{equation*}
A:H^{\omega}(\Gamma;V_1,V_2)\to H^{\omega\varrho^{-m}}(\Gamma;V_1,V_2)
\end{equation*}
for every $\omega\in\mathrm{OR}$.
\end{lemma}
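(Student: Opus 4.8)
The plan is to reduce the assertion to its classical Sobolev analogue and then transport it to the extended scale by means of the quadratic interpolation furnished by Theorem~\ref{t4.1}. First I would recall the classical mapping property of pseudodifferential operators on vector bundles over a closed manifold: a PsDO $A\in\Psi^{m}(\Gamma;V_1,V_2)$ extends to a bounded operator
\begin{equation*}
A\colon H^{(s)}(\Gamma,V_1)\to H^{(s-m)}(\Gamma,V_2)\quad\text{for every}\quad s\in\mathbb{R}
\end{equation*}
(see, e.g., \cite[Chapter~IV]{Wells}). This supplies the pair of endpoint bounds required for interpolation. Note here that the domain of $A$ is formed by sections of $V_1$ and its range by sections of $V_2$, so the spaces occurring in the statement are $H^{\omega}(\Gamma,V_1)$ and $H^{\omega\varrho^{-m}}(\Gamma,V_2)$, where $\omega\varrho^{-m}$ denotes the smoothness index $t\mapsto\omega(t)\,t^{-m}$.

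Next I would fix real numbers $s_0<s_1$ satisfying condition~\eqref{1.1} for the given $\omega\in\mathrm{OR}$, and let $\psi$ be the interpolation parameter produced from $\omega$ and the gap $s_1-s_0$ by formula~\eqref{3.30}. The Sobolev pairs $[H^{(s_0)}(\Gamma,V_1),H^{(s_1)}(\Gamma,V_1)]$ and $[H^{(s_0-m)}(\Gamma,V_2),H^{(s_1-m)}(\Gamma,V_2)]$ are regular, and the distributional extension of $A$ restricts to the bounded endpoint operators above at $s=s_0$ and $s=s_1$. Since $\psi$ is an interpolation parameter, its defining property (Section~\ref{sec3}) yields a bounded operator
\begin{equation*}
A\colon \bigl[H^{(s_0)}(\Gamma,V_1),H^{(s_1)}(\Gamma,V_1)\bigr]_{\psi}\to
\bigl[H^{(s_0-m)}(\Gamma,V_2),H^{(s_1-m)}(\Gamma,V_2)\bigr]_{\psi}.
\end{equation*}
By Theorem~\ref{t4.1}, applied with smoothness index $\omega$ over $V_1$, the domain equals $H^{\omega}(\Gamma,V_1)$ up to equivalence of norms.

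The content-bearing step, which I expect to be the main (albeit mild) obstacle, is the identification of the target space. The point is that the shifted endpoint orders $s_0-m$ and $s_1-m$ have the \emph{same} gap $s_1-s_0$ as the original pair, and that the interpolation parameter attached by \eqref{3.30} to the smoothness index $\omega\varrho^{-m}$ and this gap is exactly the $\psi$ already in hand. Indeed, writing \eqref{3.30} for the index $\omega\varrho^{-m}$ and endpoints $s_0-m<s_1-m$, and substituting $u=t^{1/(s_1-s_0)}$, the factor $t^{-m/(s_1-s_0)}$ coming from $\varrho^{-m}$ cancels the shift $m$ in the exponent $-(s_0-m)/(s_1-s_0)$, and one recovers $\psi(t)=t^{-s_0/(s_1-s_0)}\,\omega(t^{1/(s_1-s_0)})$ for $t\geq1$ (and the constant $\omega(1)$ for $0<t<1$). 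A second application of Theorem~\ref{t4.1}, now with smoothness index $\omega\varrho^{-m}$ over $V_2$, therefore identifies the target with $H^{\omega\varrho^{-m}}(\Gamma,V_2)$.

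Two routine verifications remain. First, $\omega\varrho^{-m}\in\mathrm{OR}$ whenever $\omega\in\mathrm{OR}$, since the power function $t\mapsto t^{-m}$ is $\mathrm{OR}$-varying and $\mathrm{OR}$ is closed under products; its Matuszewska indices are those of $\omega$ shifted by $-m$, so $\sigma_0(\omega\varrho^{-m})=\sigma_0(\omega)-m$ and $\sigma_1(\omega\varrho^{-m})=\sigma_1(\omega)-m$. Second, in view of the Remark following Proposition~\ref{p1}, the shifted endpoints $s_0-m<s_1-m$ satisfy \eqref{1.1} for $\omega\varrho^{-m}$ precisely because $s_0<s_1$ satisfy it for $\omega$, which legitimizes the second use of Theorem~\ref{t4.1}. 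Granting these, composing the two interpolation identities with the interpolated bound gives the asserted boundedness of $A\colon H^{\omega}(\Gamma,V_1)\to H^{\omega\varrho^{-m}}(\Gamma,V_2)$, completing the argument.
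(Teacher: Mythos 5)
Your proposal is correct and follows essentially the same route as the paper: reduce to the classical Sobolev mapping property from \cite[Chapter~IV]{Wells}, interpolate with the parameter $\psi$ from \eqref{3.30}, and identify both the domain and the target via Theorem~\ref{t4.1} (the paper simply fixes $s_0:=\sigma_0(\omega)-1$ and $s_1:=\sigma_1(\omega)+1$ where you allow any endpoints satisfying \eqref{1.1}). The cancellation showing that \eqref{3.30} applied to $\omega\varrho^{-m}$ with the shifted endpoints $s_0-m<s_1-m$ reproduces the same $\psi$ is left implicit in the paper; you are right that it is the key identification, and your verification of it is accurate.
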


Here and below, we use the function parameter $\rho(t):=t$ of $t\geq1$ in order not to indicate the variable $t$ in smoothness indexes of distribution spaces. Thus, if $\omega\in\mathrm{OR}$, then  $\omega\varrho^{-m}$ is the function $\omega(t)t^{-m}$ of $t$ and of class $\mathrm{OR}$.

\begin{proof}[Proof of Lemma~$\ref{lem-PsDO-bounded}$]
This lemma is known in the Sobolev case where $\omega(t)\equiv t^s$ for some $s\in\mathbb{R}$ \cite[Chapter~IV, Proposition~3.12]{Wells}. Putting $s_0 := \sigma_0(\omega)-1$ and $s_1 := \sigma_1(\omega)+1$, we have two bounded operators
\begin{equation*}
A: H^{(s_j)}(\Gamma, V_1)\rightarrow H^{(s_j-m)}(\Gamma, V_2), \quad j\in\{0,1\}.
\end{equation*}
Define the function parameter $\psi$ by formula \eqref{3.30} in which $\varphi:=\omega$. We conclude by Theorem~\ref{t4.1} that $A$ is a bounded operator between the spaces
\begin{equation*}
H^{\omega}(\Gamma,V_1)=
\bigl[H^{(s_0)}(\Gamma,V_1),H^{(s_1)}(\Gamma,V_1)\bigr]_\psi
\end{equation*}
and
\begin{equation*}
H^{\omega\rho^{-m}}(\Gamma,V_2)=
\bigl[H^{(s_0-m)}(\Gamma,V_2),H^{(s_1-m)}(\Gamma,V_2)\bigr]_\psi.
\end{equation*}
\end{proof}

We suppose in the sequel that the above vector bundles are decomposed into the direct sums
\begin{equation*}
V_{1}=\bigoplus_{k=1}^{k_\ast}V_{1,k}\quad\mbox{and}\quad
V_{2}=\bigoplus_{j=1}^{j_\ast}V_{2,j}
\end{equation*}
of infinitely smooth Hermitian complex vector bundles $\pi_{1,k}:V_{1,k}\to\Gamma$ and $\pi_{2,j}:V_{2,j}\to\Gamma$, resp. Here, $k_\ast$ and $j_\ast$ are certain integers subject to $1\leq k_\ast\leq p$ and $1\leq j_\ast\leq p$.

Let $A\in\Psi^{\infty}(\Gamma;V_1,V_2)$. Then $A$ is a $j_{\ast}\times k_\ast$-matrix formed by PsDOs $A_{j,k}\in\Psi^{\infty}(\Gamma;V_{1,k},V_{2,j})$, where $j=1,\ldots,j_\ast$ and $k=1,\ldots,k_\ast$. Thus, the equation $Au=f$ takes the form
\begin{equation}\label{equation-D-N}
\sum_{k=1}^{k_\ast}A_{j,k}\,u_{k}=f_{j},\quad j=1,\ldots,j_\ast.
\end{equation}
Here, $u=(u_{1},\ldots,u_{k_\ast})\in\mathcal{D}'(\Gamma,V_1)$ with each $u_{k}\in\mathcal{D}'(\Gamma,V_{1,k})$, and  $f=(f_{1},\ldots,f_{j_\ast})\in\mathcal{D}'(\Gamma,V_2)$ with each $f_{j}\in\mathcal{D}'(\Gamma,V_{2,j})$. The adjoint PsDO $A^{\ast}$ is the $k_{\ast}\times j_{\ast}$-matrix formed by the adjoint PsDOs $A_{j,k}^{\ast}\in\Psi^{\infty}(\Gamma;V_{2,j},V_{1,k})$, where $k=1,\ldots,k_\ast$ and $j=1,\ldots,j_\ast$.

Let real numbers $\ell_{1},\ldots,\ell_{j_\ast}$ and $m_{1},\ldots,m_{k_\ast}$ be chosen so that $A_{j,k}\in\Psi^{\ell_{j}+m_{k}}(\Gamma;V_{1,k},V_{2,j})$ for all $j$ and $k$. We suppose that the PsDO $A=(A_{j,k})$ is Douglis--Nirenberg elliptic on $\Gamma$ for these numbers. Equivalent definitions of this ellipticity is given in \cite[Theorem 19.5.3]{Hermander07v3}. (Such a type of the ellipticity was introduced in \cite{DouglisNirenberg55}.) Then the adjoint PsDO $A^{\ast}$ is also Douglis--Nirenberg elliptic on $\Gamma$ but for numbers $m_{1},\ldots,m_{k_\ast}$ and $\ell_{1},\ldots,\ell_{j_\ast}$. Hence, the spaces
\begin{equation*}
N:=\{u\in C^\infty(\Gamma,V_1):Au=0\;\mbox{on}\;\Gamma\}\quad\mbox{and}\quad
M:=\{w\in C^\infty(\Gamma,V_2):A^{\ast}=0\;\mbox{on}\;\Gamma\}
\end{equation*}
are finite-dimensional \cite[Theorem 19.5.3]{Hermander07v3}.

\begin{theorem}\label{th-Fredholm}
The PsDO $A$ is a Fredholm bounded operator on the pair of Hilbert spaces \begin{equation}\label{DN-operator}
A:\bigoplus_{k=1}^{k_\ast}H^{\varphi\rho^{m_k}}(\Gamma,V_{1,k})\to \bigoplus_{j=1}^{j_\ast}H^{\varphi\rho^{-\ell_j}}(\Gamma,V_{2,j})
\end{equation}
for every $\varphi\in\mathrm{OR}$. The kernel of operator \eqref{DN-operator} equals $N$, and the range
\begin{equation}\label{range-A}
A\biggl(\bigoplus_{k=1}^{k_\ast}H^{\varphi\rho^{m_k}}(\Gamma,V_{1,k})
\biggr)= \biggl\{
f\in\bigoplus_{j=1}^{j_\ast}H^{\varphi\rho^{-\ell_j}}(\Gamma,V_{2,j}):
\langle f,w\rangle_{\Gamma,V_2}=0 \;\,\mbox{for all}\;\,w\in M\biggr\}.
\end{equation}
Hence, the index of this operator is equal to $\dim N-\dim M$.
\end{theorem}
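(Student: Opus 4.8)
The plan is to deduce the statement from Douglis--Nirenberg ellipticity together with the interpolation and duality properties of the scale established above, rather than from interpolating the classical Sobolev Fredholm result directly (Fredholmness is not stable under interpolation). Boundedness of \eqref{DN-operator} is immediate: each entry $A_{j,k}\in\Psi^{\ell_j+m_k}(\Gamma;V_{1,k},V_{2,j})$ maps $H^{\varphi\rho^{m_k}}(\Gamma,V_{1,k})$ continuously into $H^{\varphi\rho^{m_k-(\ell_j+m_k)}}(\Gamma,V_{2,j})=H^{\varphi\rho^{-\ell_j}}(\Gamma,V_{2,j})$ by Lemma~\ref{lem-PsDO-bounded}, so $A$ is bounded between the orthogonal sums. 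The Fredholm property I would obtain through a parametrix, whose defining relations are purely boundedness statements and therefore transfer to the whole scale.

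Since $A$ is Douglis--Nirenberg elliptic, there is a parametrix $B=(B_{k,j})$ with $B_{k,j}\in\Psi^{-\ell_j-m_k}(\Gamma;V_{2,j},V_{1,k})$ such that $BA=I-T_1$ and $AB=I-T_2$, where $T_1\in\Psi^{-\infty}(\Gamma;V_1,V_1)$ and $T_2\in\Psi^{-\infty}(\Gamma;V_2,V_2)$; this is part of \cite[Theorem~19.5.3]{Hermander07v3}. By Lemma~\ref{lem-PsDO-bounded}, $B$ is bounded from $\bigoplus_j H^{\varphi\rho^{-\ell_j}}(\Gamma,V_{2,j})$ to $\bigoplus_k H^{\varphi\rho^{m_k}}(\Gamma,V_{1,k})$. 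Each smoothing operator $T_r$, being of order $-\infty$, maps the relevant orthogonal sum continuously into the sum of spaces whose smoothness indices are raised by one order, and the embedding of the latter back into the former is compact by Theorem~\ref{t4.3} (the ratio of indices tends to $0$). Hence $T_1$ and $T_2$ are compact, so $B$ is a two-sided regularizer of $A$ modulo compact operators, and $A$ is Fredholm.

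To identify the kernel, take $u$ in the kernel of \eqref{DN-operator}. From $I=BA+T_1$ and $Au=0$ we get $u=T_1u$, which lies in $C^\infty(\Gamma,V_1)$ because $T_1$ is smoothing; hence $u\in N$, while conversely $N$ is plainly contained in the kernel. Thus the kernel of \eqref{DN-operator} equals $N$ for every $\varphi\in\mathrm{OR}$.

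The delicate point, and the step I expect to be the main obstacle, is the description \eqref{range-A} of the range. Here I would invoke the duality Theorem~\ref{t4.6}: the dual of $\bigoplus_j H^{\varphi\rho^{-\ell_j}}(\Gamma,V_{2,j})$ with respect to $\langle\cdot,\cdot\rangle_{\Gamma,V_2}$ is $\bigoplus_j H^{\varphi^{-1}\rho^{\ell_j}}(\Gamma,V_{2,j})$, and likewise the dual of $\bigoplus_k H^{\varphi\rho^{m_k}}(\Gamma,V_{1,k})$ is $\bigoplus_k H^{\varphi^{-1}\rho^{-m_k}}(\Gamma,V_{1,k})$. Applying Lemma~\ref{lem-PsDO-bounded} to $A^{\ast}$ (Douglis--Nirenberg elliptic for the orders $m_k$ and $\ell_j$) with smoothness index $\varphi^{-1}\in\mathrm{OR}$, the operator $A^{\ast}$ is bounded from $\bigoplus_j H^{\varphi^{-1}\rho^{\ell_j}}(\Gamma,V_{2,j})$ to $\bigoplus_k H^{\varphi^{-1}\rho^{-m_k}}(\Gamma,V_{1,k})$, and the identity $\langle Au,w\rangle_{\Gamma,V_2}=\langle u,A^{\ast}w\rangle_{\Gamma,V_1}$, extended by continuity, shows that this $A^{\ast}$ is exactly the Banach adjoint of \eqref{DN-operator} under these dual pairings. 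Since the range of the Fredholm operator \eqref{DN-operator} is closed, it coincides with the annihilator of the kernel of its adjoint; and the regularity argument of the previous paragraph, applied now to the elliptic operator $A^{\ast}$, shows that this kernel consists of smooth sections and hence equals $M$. This yields \eqref{range-A}, and combining $\dim\ker=\dim N$ with $\operatorname{codim}\operatorname{ran}=\dim M$ gives the index $\dim N-\dim M$.
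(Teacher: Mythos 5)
Your proposal is correct and follows essentially the same route as the paper's proof: boundedness entrywise via Lemma~\ref{lem-PsDO-bounded}, the two-sided parametrix from H\"ormander's Theorem~19.5.3, compactness of the smoothing remainders via the order-raising boundedness and the compact-embedding criterion of Theorem~\ref{t4.3}, smoothness of the kernel from the parametrix identity, and identification of the range through the adjoint $A^{\ast}$ acting on the dual spaces furnished by Theorem~\ref{t4.6}. The only differences are expository (you spell out the duality pairing in more detail, while the paper cites \cite[Corollary 19.1.9]{Hermander07v3} for the Fredholm conclusion you state as standard).
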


Recall that a bounded linear operator $L:E_1\rightarrow E_2$ between Banach spaces $E_1$ and $E_2$ is called Fredholm if its kernel $\ker L:=\{v\in E_1:Lv=0\}$ and co-kernel $\mathrm{coker}\,L:=E_2/L(E_1)$ are finite-dimensional. If the operator $L$ is Fredholm, then its range $L(E_1)$ is closed in $E_2$ \cite[Lemma~19.1.1]{Hermander07v3} and consists of all vectors $g\in E_2$ such that $\Phi(g)=0$ for every functional $\Phi\in\ker L^{\ast}$, and its index satisfies
\begin{equation*}
\mathrm{ind}\,L:=\dim\ker L-\dim\mathrm{coker}\,L=
\dim\ker L-\dim\ker L^{\ast}<\infty.
\end{equation*}
here $L^{\ast}$ is the adjoint operator to $L$.

\begin{proof}[Proof of Theorem~$\ref{th-Fredholm}$]
Owing to Lemma~\ref{lem-PsDO-bounded} applied to $A_{j,k}$, the PsDO $A$ is a bounded operator on the pair of spaces \eqref{DN-operator}. According to \cite[Theorem 19.5.3]{Hermander07v3}, the Douglis--Nirenberg ellipticity of this operator is equivalent to the following property: there exists a PsDO $B\in\Psi^{\infty}(\Gamma;V_2,V_1)$ such that $B$ is $k_{\ast}\times j_{\ast}$-matrix formed by PsDOs $B_{k,j}\in\Psi^{-m_k-\ell_j}(\Gamma;V_{2,j},V_{1,k})$, where $k=1,\ldots,k_\ast$ and $j=1,\ldots,j_\ast$, and that
\begin{equation}\label{parametrix-equalities}
T_1:=BA-I_1\in\Psi^{-\infty}(\Gamma;V_1,V_1)\quad\mbox{and}\quad
T_2:=AB-I_2\in\Psi^{-\infty}(\Gamma;V_2,V_2),
\end{equation}
where $I_r$ is the identity operator on $\mathcal{D}'(\Gamma;V_r,V_r)$ for each $r\in\{1,2\}$. The PsDO $B$ is Douglis--Nirenberg elliptic on $\Gamma$ for numbers $-m_{1},\ldots,-m_{k_\ast}$ and $-\ell_{1},\ldots,-\ell_{j_\ast}$ and is called a two-sided parametrix for $A$. According to Lemma~\ref{lem-PsDO-bounded}, we have bounded operators
\begin{equation}\label{parametrix-operator}
B:\bigoplus_{j=1}^{j_\ast}H^{\varphi\rho^{-\ell_j}}(\Gamma,V_{2,j})\to
\bigoplus_{k=1}^{k_\ast}H^{\varphi\rho^{m_k}}(\Gamma,V_{1,k})
\end{equation}
and
\begin{gather}\label{T-1}
T_1:\bigoplus_{k=1}^{k_\ast}H^{\varphi\rho^{m_k}}(\Gamma,V_{1,k})\to
\bigoplus_{k=1}^{k_\ast}H^{\varphi\rho^{m_k+\lambda}}(\Gamma,V_{1,k}),\\
T_2:\bigoplus_{j=1}^{j_\ast}H^{\varphi\rho^{-\ell_j}}(\Gamma,V_{2,j})\to
\bigoplus_{j=1}^{j_\ast}H^{\varphi\rho^{-\ell_j+\lambda}}(\Gamma,V_{2,j})
\label{T-2}
\end{gather}
for every $\lambda>0$. This implies by Theorem~\ref{t4.3} that $T_1$ and $T_2$ are compact operators on the source spaces of \eqref{T-1} and \eqref{T-2}, resp. Hence \cite[Corollary 19.1.9]{Hermander07v3}, it follows from equalities \eqref{parametrix-equalities} that the bounded operator \eqref{DN-operator} is Fredholm. Its kernel lies in $C^\infty(\Gamma,V_1)$ due to the first equality in \eqref{parametrix-equalities} and property \eqref{T-1}; thus, the kernel is $N$. The adjoint operator to \eqref{DN-operator} coincides with the PsDO $A^{\ast}$ in the sense of Theorem~\ref{t4.6}. Since $A^{\ast}$ is also Douglis--Nirenberg elliptic, $M$ is the kernel of the adjoint of \eqref{DN-operator}, which implies \eqref{range-A}.
\end{proof}

Let $\Gamma_0$ be an open nonempty subset of the manifold $\Gamma$. Considering the elliptic equation $Au=f$ (i.e., the mixed-order system \eqref{equation-D-N}), we study the local regularity of its solution on~$\Gamma_0$.

Given $\omega\in \mathrm{OR}$, $r\in\{1,2\}$, and admissible $i$, we let $H_{\mathrm{loc}}^{\omega}(\Gamma_0,V_{r,i})$ denote the linear space of all $v\in\mathcal{D}'(\Gamma,V_{r,i})$ such that $\chi v\in H^{\omega}(\Gamma,V_{r,i})$ for every scalar function $\chi\in C^\infty(\Gamma)$ with $\mathrm{supp}\,\chi\subset\Gamma_0$. Of course,
if $\Gamma_0=\Gamma$, then $H_{\mathrm{loc}}^{\omega}(\Gamma_0,V_{r,i})$ coincides with $H^{\omega}(\Gamma,V_{r,i})$.

\begin{theorem}\label{th-regularity}
Let $u \in \mathcal{D}'(\Gamma, V_1)$, $f \in \mathcal{D}'(\Gamma, V_2)$, and $\varphi \in \mathrm{OR}$. Assume that $Au=f$ on $\Gamma_0$. Then
\begin{equation}\label{equivalence-regularity}
f\in\bigoplus_{j=1}^{j_\ast}
H^{\varphi\rho^{-\ell_j}}_{\mathrm{loc}}(\Gamma_0,V_{2,j})
\;\; \Longleftrightarrow \;\;
u\in \bigoplus_{k=1}^{k_\ast}
H^{\varphi\rho^{m_k}}_{\mathrm{loc}}(\Gamma_0,V_{1,k}).
\end{equation}
\end{theorem}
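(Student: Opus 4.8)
The plan is to prove both implications by the standard localization-and-parametrix technique, reducing each global assertion to a statement about an arbitrary cutoff. I fix a scalar function $\chi\in C^\infty(\Gamma)$ with $\mathrm{supp}\,\chi\subset\Gamma_0$; since $\mathrm{supp}\,\chi$ is a compact subset of the open set $\Gamma_0$, I may choose a second function $\eta\in C^\infty(\Gamma)$ with $\mathrm{supp}\,\eta\subset\Gamma_0$ and $\eta\equiv1$ in a neighbourhood of $\mathrm{supp}\,\chi$. By the very definition of the local spaces $H^{\omega}_{\mathrm{loc}}(\Gamma_0,\cdot)$, the right-hand (resp. left-hand) side of \eqref{equivalence-regularity} is equivalent to $\chi u$ (resp. $\chi f$) lying in the corresponding global space for every such $\chi$. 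Throughout I shall use three facts: that multiplication by a function from $C^\infty(\Gamma)$ is a PsDO of order $0$ and hence, by Lemma~\ref{lem-PsDO-bounded}, acts boundedly on each $H^{\omega}(\Gamma,\cdot)$; the pseudolocal property, that $\chi'P\chi''\in\Psi^{-\infty}$ whenever $P\in\Psi^{\infty}$ and $\mathrm{supp}\,\chi'\cap\mathrm{supp}\,\chi''=\emptyset$; and the hypothesis $Au=f$ on $\Gamma_0$, which gives $\eta\,Au=\eta f$ as generalized sections on all of $\Gamma$ because $\mathrm{supp}\,\eta\subset\Gamma_0$.

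The implication ``$\Leftarrow$'' is the easier one and uses only the boundedness of $A$. Assume $u\in\bigoplus_k H^{\varphi\varrho^{m_k}}_{\mathrm{loc}}(\Gamma_0,V_{1,k})$. Writing $\chi f=\chi Au=\chi A(\eta u)+\chi A\bigl((1-\eta)u\bigr)$, the second term equals $[\chi A(1-\eta)]u$ and is smooth because $\chi A(1-\eta)\in\Psi^{-\infty}$ by pseudolocality. For the first term, $\eta u\in\bigoplus_k H^{\varphi\varrho^{m_k}}(\Gamma,V_{1,k})$ by hypothesis, and applying Lemma~\ref{lem-PsDO-bounded} to each entry $A_{j,k}\in\Psi^{\ell_j+m_k}(\Gamma;V_{1,k},V_{2,j})$ sends this into $\bigoplus_j H^{\varphi\varrho^{-\ell_j}}(\Gamma,V_{2,j})$; multiplication by $\chi$ keeps it there. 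Hence $\chi f\in\bigoplus_j H^{\varphi\varrho^{-\ell_j}}(\Gamma,V_{2,j})$, and as $\chi$ was arbitrary the left-hand side of \eqref{equivalence-regularity} follows.

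The implication ``$\Rightarrow$'' is the heart of the matter and is where the Douglis--Nirenberg parametrix $B$ constructed in the proof of Theorem~\ref{th-Fredholm} enters. From the first identity in \eqref{parametrix-equalities} I have $u=BAu-T_1u$ with $T_1\in\Psi^{-\infty}(\Gamma;V_1,V_1)$, so $\chi u=\chi B(Au)-\chi T_1u$. The term $\chi T_1u$ is smooth, since $T_1$ maps $\mathcal{D}'(\Gamma,V_1)$ into $C^\infty(\Gamma,V_1)$. Splitting the remaining term as $\chi B(Au)=\chi B(\eta\,Au)+[\chi B(1-\eta)](Au)$, the bracketed operator is smoothing by pseudolocality, so $[\chi B(1-\eta)](Au)\in C^\infty(\Gamma,V_1)$. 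Finally $\chi B(\eta\,Au)=\chi B(\eta f)$ by the hypothesis $Au=f$ on $\Gamma_0$; since $f$ is locally of the prescribed smoothness, $\eta f\in\bigoplus_j H^{\varphi\varrho^{-\ell_j}}(\Gamma,V_{2,j})$, and the boundedness \eqref{parametrix-operator} of the parametrix together with the boundedness of multiplication by $\chi$ places $\chi B(\eta f)$ in $\bigoplus_k H^{\varphi\varrho^{m_k}}(\Gamma,V_{1,k})$. Collecting the three contributions gives $\chi u\in\bigoplus_k H^{\varphi\varrho^{m_k}}(\Gamma,V_{1,k})$ for every admissible $\chi$, which is the right-hand side of \eqref{equivalence-regularity}.

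The main obstacle I anticipate is the rigorous handling of the two ``off-diagonal'' cut-off compositions $\chi A(1-\eta)$ and $\chi B(1-\eta)$: one must be sure that the pseudolocal property and the mapping $\Psi^{-\infty}\colon\mathcal{D}'\to C^\infty$ are available in the vector-bundle setting with the chosen Hermitian structures, and that multiplication by $C^\infty(\Gamma)$-functions is genuinely bounded on the function-parameter spaces $H^{\varphi\varrho^{s}}(\Gamma,\cdot)$ --- the latter being a PsDO of order zero, so that Lemma~\ref{lem-PsDO-bounded} applies. A secondary point to check carefully is that $\eta\,Au=\eta f$ on all of $\Gamma$ follows from $Au=f$ on $\Gamma_0$ alone, which is immediate once $\mathrm{supp}\,\eta\subset\Gamma_0$ is fixed; notably, no bootstrapping in the smoothness index is needed, because the parametrix produces the exact target smoothness $\varphi$ in a single application.
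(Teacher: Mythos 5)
Your proof is correct and follows essentially the same route as the paper's: the same cutoff pair $\chi,\eta$, the same parametrix identity $u=BAu-T_1u$ with the decomposition $\chi u=\chi B\eta Au+(\text{smoothing terms})$ for one direction, and $\chi f=\chi A\eta u+\chi A(1-\eta)u$ with pseudolocality for the other. The only cosmetic difference is that you apply pseudolocality to $\chi B(1-\eta)$ acting on $Au$, whereas the paper absorbs $A$ into a single smoothing operator $\chi B(1-\eta)A\in\Psi^{-\infty}(\Gamma;V_1,V_1)$ acting on $u$; these are equivalent.
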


\begin{proof}
We choose any function $\chi\in C^\infty(\Gamma)$ and a certain function $\eta\in C^\infty(\Gamma)$ such that
\begin{equation}\label{cut-functions}
\mathrm{supp}\,\chi\subset\Gamma_0,\quad \mathrm{supp}\,\eta\subset\Gamma_0,
\quad\mbox{and}\quad
\eta=1\;\,\mbox{in a neighbourhood of}\;\,\mathrm{supp}\,\chi.
\end{equation}
According to \eqref{parametrix-equalities}, we get
\begin{equation}\label{chi-u}
\chi u=\chi BAu-\chi T_{1}u=\chi B\eta Au+\chi B(1-\eta)Au-\chi T_{1}u
=:\chi B\eta Au+Tu
\end{equation}
where $T\in\Psi^{-\infty}(\Gamma;V_1,V_1)$ because the supports of $\chi$ and $(1-\eta)$ are disjoint and since $T_{1}\in\Psi^{-\infty}(\Gamma;V_1,V_1)$. Hence, if the left-hand side of \eqref{equivalence-regularity} holds true, then
\begin{equation*}
\chi u=\chi B\eta f+Tu\in
\bigoplus_{k=1}^{k_\ast}H^{\varphi\rho^{m_k}}(\Gamma,V_{1,k})
\end{equation*}
in view of \eqref{parametrix-operator}, i.e. $u$ satisfies the right-hand side of \eqref{equivalence-regularity}.

On the other hand, if the right-hand side of \eqref{equivalence-regularity} holds true, then
\begin{equation*}
\chi f=\chi Au=\chi A(1-\eta)u+\chi A\eta u\in
\bigoplus_{j=1}^{j_\ast}H^{\varphi\rho^{-\ell_j}}(\Gamma,V_{2,j})
\end{equation*}
because the PsDO $u\mapsto\chi A(1-\eta)u$ belong to $\Psi^{-\infty}(\Gamma;V_1,V_2)$ and due to \eqref{DN-operator}, i.e. $f$ satisfies the left-hand side of \eqref{equivalence-regularity}.
\end{proof}

We supplement Theorem~\ref{th-regularity} with a local \textit{a~priory} estimate of $u$.

\begin{theorem}\label{th-estimate}
Let $\varphi\in\mathrm{OR}$, $\lambda\in\mathbb{R}$, $u\in H^{(\lambda)}(\Gamma,V_1)$, $f\in\mathcal{D}'(\Gamma,V_2)$, and $Au=f$ on~$\Gamma_0$. Suppose that the right-hand (or left-hand) side of \eqref{equivalence-regularity} holds true. Let scalar functions $\chi,\eta\in C^\infty(\Gamma)$ satisfy \eqref{cut-functions}. Then
\begin{equation}\label{local-estimate}
\|\chi u\|'_{\varphi}\leq
c\,\bigl(\|\eta f\|''_{\varphi}+\|u\|_{\lambda}\bigr),
\end{equation}
where $c$ is a certain positive number that does not depend on $u$ and $f$. Here, $\|\cdot\|'_{\varphi}$ and $\|\cdot\|''_{\varphi}$ denote the norms in the source space and target space of operator \eqref{DN-operator}, whereas $\|\cdot\|_{\lambda}$ stand for the norm in $H^{(\lambda)}(\Gamma,V_1)$. If the operator $A$ is differential, then \eqref{local-estimate} holds true with $\|\eta u\|_{\lambda}$ instead of $\|u\|_{\lambda}$ (and we may replace the hypothesis $u\in H^{(\lambda)}(\Gamma,V_1)$ with $u\in\mathcal{D}'(\Gamma,V_1)$).
\end{theorem}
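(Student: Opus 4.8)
The plan is to read the required estimate straight off the operator identity already produced in the proof of Theorem~\ref{th-regularity}, turning each of its terms into a norm bound by means of Lemma~\ref{lem-PsDO-bounded} and the smoothing character of the parametrix remainder. Fix $\chi,\eta\in C^\infty(\Gamma)$ satisfying \eqref{cut-functions}. First I would record that $\eta Au=\eta f$ in $\mathcal{D}'(\Gamma,V_2)$: indeed $\mathrm{supp}\,\eta\subset\Gamma_0$ and $Au=f$ on $\Gamma_0$, so for any test section $w$ one has $\langle\eta Au,w\rangle_{\Gamma,V_2}=\langle Au,\eta w\rangle_{\Gamma,V_2}=\langle f,\eta w\rangle_{\Gamma,V_2}=\langle\eta f,w\rangle_{\Gamma,V_2}$ because $\eta w$ is supported in $\Gamma_0$. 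Substituting this into \eqref{chi-u} gives the working identity $\chi u=\chi B\eta f+Tu$, where $T\in\Psi^{-\infty}(\Gamma;V_1,V_1)$ exactly as established there.

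Next I would estimate the two summands separately. For the first term, multiplication by $\chi$ (resp.\ $\eta$) is a PsDO of order $0$, so by Lemma~\ref{lem-PsDO-bounded} it is bounded on each component of the source (resp.\ target) space of \eqref{DN-operator}; combined with the boundedness of the parametrix \eqref{parametrix-operator}, the operator $\chi B$ is bounded from the target space into the source space, whence $\|\chi B\eta f\|'_\varphi\leq c_1\|\eta f\|''_\varphi$. For the second term, since $T\in\Psi^{-\infty}$ each of its blocks lies in $\Psi^{m'}$ for arbitrarily negative $m'$; choosing $m'$ small enough and using Lemma~\ref{lem-PsDO-bounded} together with the embedding $H^{(\lambda+N)}(\Gamma,V_{1,k})\hookrightarrow H^{\varphi\rho^{m_k}}(\Gamma,V_{1,k})$, valid for large $N$ by Theorem~\ref{t4.3}, the operator $T:H^{(\lambda)}(\Gamma,V_1)\to\bigoplus_{k}H^{\varphi\rho^{m_k}}(\Gamma,V_{1,k})$ is bounded, so $\|Tu\|'_\varphi\leq c_2\|u\|_\lambda$. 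Adding the two bounds yields \eqref{local-estimate}.

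For the case of a differential $A$ I would replace the global remainder by a localized one. Applying the parametrix identity $I_1+T_1=BA$ to the section $\eta u$ and using $\chi\eta=\chi$ gives $\chi u=\chi BA(\eta u)-\chi T_1(\eta u)$. Since $A$ is differential, hence local, $A(\eta u)=\eta Au+[A,\eta]u=\eta f+[A,\eta]u$, where $[A,\eta]:=A\eta-\eta A$ is a matrix of differential operators whose block $(j,k)$ has order $\ell_j+m_k-1$ and whose coefficients are supported in $\mathrm{supp}\,d\eta$. This produces
\[
\chi u=\chi B(\eta f)+\chi B[A,\eta]u-\chi T_1(\eta u).
\]
The first and last terms are handled as before, the last now giving $\|\chi T_1(\eta u)\|'_\varphi\leq c\,\|\eta u\|_\lambda$ because $T_1\in\Psi^{-\infty}$ and $\chi T_1$ maps $H^{(\lambda)}(\Gamma,V_1)$ boundedly into the source space.

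The main obstacle is the commutator term $\chi B[A,\eta]u$, which still carries $u$ rather than $\eta u$. Here I would exploit two localizations at once. Because the coefficients of $[A,\eta]$ sit in $\mathrm{supp}\,d\eta$, which is disjoint from the neighbourhood of $\mathrm{supp}\,\chi$ on which $\eta\equiv1$, one may write $[A,\eta]=\zeta[A,\eta]$ with a cut-off $\zeta\in C^\infty(\Gamma)$ supported in that disjoint region and equal to $1$ on $\mathrm{supp}\,d\eta$; the pseudolocality of $B$ then forces $\chi B\zeta\in\Psi^{-\infty}$, hence $\chi B[A,\eta]\in\Psi^{-\infty}$. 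Since the order of $[A,\eta]$ is one lower than that of $A$, this composition gains a derivative, and I would secure the bound by $\|\eta u\|_\lambda$ through a finite iteration over a chain of nested cut-offs $\chi=\eta_0,\eta_1,\dots,\eta_N=\eta$ (each equal to $1$ on the support of the previous one and supported in $\Gamma_0$): at each step the local, order-lowering commutator is traded against a one-degree-weaker norm on the next cut-off, and after finitely many steps—finite because the smoothness gap between $\varphi\rho^{m_k}$ and $\rho^{\lambda}$ is finite—the remainder is controlled by $\|\eta u\|_\lambda$. Since every operator appearing acts continuously on $\mathcal{D}'(\Gamma,V_1)$, the hypothesis $u\in H^{(\lambda)}(\Gamma,V_1)$ may indeed be dropped in favour of $u\in\mathcal{D}'(\Gamma,V_1)$, finiteness of the right-hand side being guaranteed by the regularity assumed in \eqref{equivalence-regularity}. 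The delicate point throughout is to keep the lower-order contributions confined near $\mathrm{supp}\,\eta$, which rests precisely on combining the locality of the differential operator with the pseudolocality of its parametrix.
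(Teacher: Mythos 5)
Your proof of the main estimate \eqref{local-estimate} for a general pseudodifferential $A$ is correct and is essentially the paper's own argument: the identity \eqref{chi-u}, the boundedness \eqref{parametrix-operator} of the parametrix, the fact that the smoothing remainder maps $H^{(\lambda)}(\Gamma,V_1)$ into the source space of \eqref{DN-operator}, and $\eta Au=\eta f$.

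The differential case, however, has a genuine gap. You commute $A$ past $\eta$ itself and isolate $\chi B[A,\eta]u$ as "the main obstacle", proposing to control it by pseudolocality plus an iteration. That term cannot be controlled by the allowed right-hand side through such means. Although $\chi B[A,\eta]\in\Psi^{-\infty}(\Gamma;V_1,V_2)\circ$(differential operator) is indeed smoothing as an operator, it is applied to $u$, and $[A,\eta]u$ is determined by $u$ on a neighbourhood of $\mathrm{supp}\,d\eta$ --- the outer transition region where $\eta$ takes values arbitrarily close to $0$ (at regular values of $\eta$ the level sets $\{\eta=c\}$, $0<c<1$, meet $\mathrm{supp}\,d\eta$). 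No norm of $\eta u$ controls $u$ there: a cut-off $\psi$ with $\psi\equiv1$ near $\mathrm{supp}\,d\eta$ cannot be written as $\psi_1\eta$ with $\psi_1$ smooth, so $[A,\eta]u\neq[A,\eta](\eta u)$, and the bound $\|\chi B[A,\eta]u\|'_{\varphi}\leq c\,(\|\eta f\|''_{\varphi}+\|\eta u\|_{\lambda})$ is not available as a stepping stone. In fact it is \emph{equivalent} to the theorem itself: since $\eta Au=\eta f$ and $BA=I_1+T_1$, one has the identity $\chi B[A,\eta]u=\chi u+\chi T_1(\eta u)-\chi B(\eta f)$, so estimating the commutator term amounts to estimating $\chi u$, which is what you are trying to prove. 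Your chain of cut-offs does not repair this, because the iteration moves through the interior of the set $\{\eta=1\}$, whereas the uncontrolled dependence sits at the outer edge $\mathrm{supp}\,d\eta$; and the terminal step of the chain as you describe it ($\eta_N=\eta$) reproduces exactly the same defect via $[A,\eta_N]=[A,\eta]$. A correct argument must never take a commutator with $\eta$ itself; intermediate cut-offs supported in $\{\eta=1\}$ may be commuted, with $\eta$ entering only as the final localizer through $\psi u=\psi\eta u$.

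Once this is seen, no commutator and no iteration are needed, and one arrives at the paper's proof: choose $\eta_0\in C^\infty(\Gamma)$ with $\mathrm{supp}\,\eta_0\subset\Gamma_0$, $\eta_0=1$ in a neighbourhood of $\mathrm{supp}\,\chi$, and $\eta=1$ in a neighbourhood of $\mathrm{supp}\,\eta_0$. The analogue of \eqref{chi-u} built with $\eta_0$ gives $\chi v=\chi B\eta_0Av+T_0v$ for all $v\in\mathcal{D}'(\Gamma,V_1)$, with $T_0\in\Psi^{-\infty}(\Gamma;V_1,V_1)$. Applying it to $v=\eta u$ and using the locality of the differential operator $A$ (namely $\eta_0A(\eta u)=\eta_0Au$, because $\eta\equiv1$ on a neighbourhood of $\mathrm{supp}\,\eta_0$) yields $\chi u=\chi B\eta_0Au+T_0(\eta u)$, whence $\|\chi u\|'_{\varphi}\leq c_0(\|\eta_0Au\|''_{\varphi}+\|\eta u\|_{\lambda})$ and $\|\eta_0Au\|''_{\varphi}=\|\eta_0\eta Au\|''_{\varphi}\leq c_1\|\eta f\|''_{\varphi}$; the passage to $u\in\mathcal{D}'(\Gamma,V_1)$ is then immediate.
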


Of course, estimate \eqref{local-estimate} makes sense if $-\lambda\gg1$; then the source space of operator \eqref{DN-operator} is embedded in $H^{(\lambda)}(\Gamma,V_1)$.

\begin{proof}
The required estimate \eqref{local-estimate} follows from \eqref{chi-u}, \eqref{parametrix-operator} and $T\in\Psi^{-\infty}(\Gamma;V_1,V_1)$:
\begin{equation*}
\|\chi u\|'_{\varphi}\leq \|\chi B\eta Au\|'_{\varphi}+\|Tu\|'_{\varphi}
\leq c\,\bigl(\|\eta Au\|''_{\varphi}+\|u\|_{\lambda}\bigr).
\end{equation*}
Suppose now that the operator $A$ is a differential. Choose a function $\eta_{0}\in C^\infty(\Gamma)$ that satisfies the following conditions: $\mathrm{supp}\,\eta_{0}\subset\Gamma_0$, $\eta_{0}=1$ in a neighbourhood of $\mathrm{supp}\,\chi$, and $\eta=1$ in a neighbourhood of $\mathrm{supp}\,\eta_{0}$. It follows from \eqref{chi-u} that $\chi v=\chi B\eta_{0}Av+T_{0}v$ for every $v\in\mathcal{D}'(\Gamma,V_1)$ and certain $T_{0}\in\Psi^{-\infty}(\Gamma;V_1,V_1)$. Then
\begin{equation*}
\chi u=\chi\eta u=\chi B\eta_{0}A\eta u+T_{0}\eta u=
\chi B\eta_{0}Au+T_{0}\eta u;
\end{equation*}
the last equality is true because $A$ is differential. Hence,
\begin{equation*}
\|\chi u\|'_{\varphi}\leq
\|\chi B\eta_{0}Au\|'_{\varphi}+\|T_{0}\eta u\|'_{\varphi}
\leq c_{0}\bigl(\|\eta_{0}Au\|''_{\varphi}+\|\eta u\|_{\lambda}\bigr),
\end{equation*}
with
\begin{equation*}
\|\eta_{0}Au\|''_{\varphi}=\|\eta_{0}\eta Au\|''_{\varphi}\leq
c_{1}\|\eta Au\|''_{\varphi}.
\end{equation*}
Here, the positive numbers $c_{0}$ and $c_{1}$ do not depend on $u$ (and $f$). This yields the required estimate.
\end{proof}

\begin{remark}\label{rem-estimate}
If we replace the hypothesis $u\in H^{(\lambda)}(\Gamma,V_1)$ with
\begin{equation}\label{-1-regularity}
u\in\bigoplus_{k=1}^{k_\ast}H^{\varphi\rho^{m_k-1}}(\Gamma,V_{1,k})
\end{equation}
in Theorem~\ref{th-estimate} and retains other hypotheses, then
\begin{equation}\label{-1-local-estimate}
\|\chi u\|'_{\varphi}\leq
c\,\bigl(\|\chi f\|''_{\varphi}+\|u\|'_{\varphi\rho^{-1}}\bigr).
\end{equation}
Of course, $\|\cdot\|'_{\varphi\rho^{-1}}$ stands for the norm in the space written in \eqref{-1-regularity}. If, in addition, $A$ is a differential operator, then \eqref{-1-local-estimate} holds true with $\|\eta u\|'_{\varphi\rho^{-1}}$ instead of $\|u\|'_{\varphi\rho^{-1}}$ (and we may replace \eqref{-1-regularity} with $u\in\mathcal{D}'(\Gamma,V_1)$).
\end{remark}

\begin{proof}
According to Theorem~\ref{th-estimate} (considered for $\Gamma_{0}=\Gamma$ and $\chi\equiv\eta\equiv1$) there exists a number $c_1>0$ such that
\begin{equation*}
\|v\|'_{\varphi}\leq
c_1\bigl(\|Av\|''_{\varphi}+\|v\|'_{\varphi\rho^{-1}}\bigr)
\end{equation*}
for every section $v$ from the space indicated in \eqref{-1-regularity}. Assuming \eqref{-1-regularity} and putting $v:=\chi u$, we hence obtain the estimate
\begin{equation}\label{rem-estimate-1}
\|\chi u\|'_{\varphi}\leq
c_1\bigl(\|A\chi u\|''_{\varphi}+\|\chi u\|'_{\varphi\rho^{-1}}\bigr).
\end{equation}
Rearranging the PsDO $A$ and the operator of the multiplication by $\chi$, we write $A\chi u=\chi Au+A'u$, where $A'$ is a certain PsDO of class $\Psi^{m-1}(\Gamma;V_1,V_2)$ (see, e.g., \cite[p.~13]{Agranovich94}). Therefore,
\begin{equation}\label{rem-estimate-2}
\|A\chi u\|''_{\varphi}\leq
\|\chi Au\|''_{\varphi}+\|A'u\|''_{\varphi}\leq
\|\chi Au\|''_{\varphi}+c_2\|u\|'_{\varphi\rho^{-1}},
\end{equation}
where $c_2$ is the norm of $A'$ on the corresponding spaces. Now \eqref{rem-estimate-1} and \eqref{rem-estimate-2} yield the required estimate \eqref{-1-local-estimate}.

If the operator $A$ is differential, then we write
\begin{equation*}
A\chi u=A\chi\eta u=\chi A\eta u+A'\eta u=\chi Au+A'\eta u
\end{equation*}
and hence get the estimate
\begin{equation*}\label{rem-estimate-3}
\|A\chi u\|''_{\varphi}\leq
\|\chi Au\|''_{\varphi}+\|A'\eta u\|''_{\varphi}\leq
\|\chi Au\|''_{\varphi}+c_2\|\eta u\|'_{\varphi\rho^{-1}}
\end{equation*}
and note that
\begin{equation*}
\|\chi u\|'_{\varphi\rho^{-1}}=\|\chi \eta u\|'_{\varphi\rho^{-1}}\leq c_{3}\|\eta u\|'_{\varphi\rho^{-1}},
\end{equation*}
where the number $c_{3}>0$ does not depend on $u$. This together with \eqref{rem-estimate-1} substantiates the last sentence of Remark~\ref{rem-estimate}.
\end{proof}

Ending this section, we give an exact sufficient condition for a component $u_{k}\in\mathcal{D}'(\Gamma,V_{1,k})$ of the solution $u=(u_{1},\ldots,u_{k_\ast})\in\mathcal{D}'(\Gamma,V_1)$ of equation \eqref{equation-D-N} to be $q$ times continuously differentiable on $\Gamma_0$. This property is symbolized as $u\in C^{q}(\Gamma_0,V_{1,k})$.

\begin{theorem}\label{th-classical-smoothness}
Let $k\in\{1,\ldots,k_\ast\}$ and $0\leq q\in\mathbb{Z}$, and let $\varphi\in\mathrm{OR}$ satisfy
\begin{equation}\label{condition-classical-smoothness}
\int\limits_1^\infty t^{2q+n-1-2k}\,\varphi^{-2}(t)\,dt<\infty.
\end{equation}
Suppose that $u \in \mathcal{D}'(\Gamma, V)$ is a solution to the elliptic equation $Au=f$ on $\Gamma_0$ for certain
\begin{equation*}
f\in\bigoplus_{j=1}^{j_\ast}
H^{\varphi\rho^{-\ell_j}}_{\mathrm{loc}}(\Gamma_0,V_{2,j}).
\end{equation*}
Then $u_{k}\in C^{q}(\Gamma_0,V_{1,k})$.
\end{theorem}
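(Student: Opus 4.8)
The plan is to obtain the claimed classical smoothness in two stages: first upgrade the \emph{distributional} local smoothness of the component $u_k$ by means of Theorem~\ref{th-regularity}, and then convert this Hilbert-space smoothness into $C^q$-regularity by a localized version of the embedding Theorem~\ref{t4.7}. First I would apply Theorem~\ref{th-regularity} to the solution $u$ and the right-hand side $f$. Since $Au=f$ on $\Gamma_0$ and
\[
f\in\bigoplus_{j=1}^{j_\ast}H^{\varphi\rho^{-\ell_j}}_{\mathrm{loc}}(\Gamma_0,V_{2,j}),
\]
the equivalence \eqref{equivalence-regularity} yields
\[
u\in\bigoplus_{k=1}^{k_\ast}H^{\varphi\rho^{m_k}}_{\mathrm{loc}}(\Gamma_0,V_{1,k});
\]
in particular the chosen component satisfies $u_k\in H^{\varphi\rho^{m_k}}_{\mathrm{loc}}(\Gamma_0,V_{1,k})$. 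Thus the order shift dictated by the Douglis--Nirenberg structure places $u_k$ on the extended Sobolev scale with the smoothness index $\varphi\rho^{m_k}$, and it remains only to make this smoothness classical.

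Next I would localize and invoke Theorem~\ref{t4.7} for the single bundle $\pi_{1,k}:V_{1,k}\to\Gamma$ with the index $\omega:=\varphi\rho^{m_k}$, which lies in $\mathrm{OR}$ because $\varphi\in\mathrm{OR}$. Fix any $\chi\in C^\infty(\Gamma)$ with $\mathrm{supp}\,\chi\subset\Gamma_0$; by the definition of $H^{\omega}_{\mathrm{loc}}$ we then have $\chi u_k\in H^{\varphi\rho^{m_k}}(\Gamma,V_{1,k})$ over the whole manifold. The integrability hypothesis of Theorem~\ref{t4.7} for this index reads
\[
\int\limits_1^\infty t^{2q+n-1}\,\bigl(\varphi\rho^{m_k}\bigr)^{-2}(t)\,dt
=\int\limits_1^\infty t^{2q+n-1-2m_k}\,\varphi^{-2}(t)\,dt<\infty,
\]
which is exactly condition~\eqref{condition-classical-smoothness}. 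Hence Theorem~\ref{t4.7} provides the (compact) embedding $H^{\varphi\rho^{m_k}}(\Gamma,V_{1,k})\subset C^q(\Gamma,V_{1,k})$, so that $\chi u_k\in C^q(\Gamma,V_{1,k})$ for every such cutoff~$\chi$.

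Finally I would remove the cutoff by a routine covering argument: for an arbitrary point $x_0\in\Gamma_0$ I choose $\chi\in C^\infty(\Gamma)$ with $\mathrm{supp}\,\chi\subset\Gamma_0$ and $\chi\equiv1$ in a neighbourhood of $x_0$; then $u_k=\chi u_k$ there, so $u_k$ is $q$ times continuously differentiable near $x_0$, whence $u_k\in C^q(\Gamma_0,V_{1,k})$ because $x_0$ is arbitrary. I expect no serious obstacle here; the only point demanding attention is the bookkeeping of the order shift, namely that it is the factor $\rho^{m_k}$ (and not $\varphi$ alone) that converts the generic embedding integral $\int_1^\infty t^{2q+n-1}\omega^{-2}\,dt$ of Theorem~\ref{t4.7} into the stated condition~\eqref{condition-classical-smoothness}. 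The exactness of this condition on the scale, if one wishes to record it, would follow from the converse half of Theorem~\ref{t4.7} together with the reverse implication in~\eqref{equivalence-regularity}.
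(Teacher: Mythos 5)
Your proof is correct and takes essentially the same route as the paper's own proof: Theorem~\ref{th-regularity} upgrades the solution to $u_k\in H^{\varphi\rho^{m_k}}_{\mathrm{loc}}(\Gamma_0,V_{1,k})$, and the embedding of Theorem~\ref{t4.7} applied to the index $\varphi\rho^{m_k}$, followed by the cutoff/covering argument, yields $u_k\in C^{q}(\Gamma_0,V_{1,k})$. One remark: your computation of the embedding integral produces the exponent $2q+n-1-2m_k$, so the printed condition~\eqref{condition-classical-smoothness}, whose exponent is $2q+n-1-2k$, evidently carries a typo ($k$ in place of $m_k$); your reading is the mathematically correct one and is exactly what the paper's own proof tacitly uses when it invokes Theorem~\ref{t4.7} for the space $H^{\varphi\rho^{m_k}}(\Gamma,V_{1,k})$.
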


\begin{proof}
We arbitrarily choose a point $x\in\Gamma_0$ and a certain function $\chi\in C^{\infty}(\Gamma)$ satisfying $\mathrm{supp}\,\chi\subset \Gamma_0$ and $\chi=1$ in a neighbourhood $\Gamma_{x}$ of $x$. We conclude by Theorems \ref{th-regularity} and \ref{t4.7}  that
\begin{equation*}
\chi u_{k}\in H^{\varphi\rho^{m_k}}(\Gamma,V_{1,k})\subset
C^{q}(\Gamma,V_{1,k}).
\end{equation*}
Thus $u_{k}\in C^{q}(\Gamma_{x},V_{1,k})$ for every $x\in\Gamma_0$, which gives the required inclusion.
\end{proof}

\begin{remark}
Condition \eqref{condition-classical-smoothness} is exact in this theorem because this condition follows from the implication
\begin{equation}\label{implication-C^q}
\biggl(u \in \mathcal{D}'(\Gamma, V_1),\; Au\in \bigoplus_{j=1}^{j_\ast} H^{\varphi\rho^{-\ell_j}}_{\mathrm{loc}}(\Gamma_0,V_{2,j})\biggr)
\Longrightarrow u_{k}\in C^{q}(\Gamma_0,V_{1,k}).
\end{equation}
\end{remark}

\begin{proof}
Suppose that implication \eqref{implication-C^q} is true. If
\begin{equation*}
u\in\bigoplus_{k=1}^{k_\ast}
H^{\varphi\rho^{m_k}}_{\mathrm{loc}}(\Gamma_0,V_{1,k}),
\end{equation*}
then $u$ satisfies the premise of this implication by Theorem~\ref{th-regularity}. Hence,
\begin{equation}\label{H-C-remark}
H^{\varphi\rho^{m_k}}_{\mathrm{loc}}(\Gamma_0,V_{1,k})\subset
C^{q}(\Gamma_0,V_{1,k})
\end{equation}
due to \eqref{implication-C^q}. It remains to show that \eqref{H-C-remark} implies condition \eqref{condition-classical-smoothness} by Proposition~\ref{p3}. We assume without loss of generality that  $\Gamma_0\cap\Gamma_1\neq\emptyset$ and
choose a nonempty open set $G\subset\mathbb{R}^n$ such that $\overline{\alpha_{1}(G)}\subset\Gamma_0\cap\Gamma_1$. Recall that $\alpha_{1}:\mathbb{R}^{n}\to\Gamma_{1}$ is the above-mentioned local chart of $\Gamma$. Consider an arbitrary function $w_1\in H^{\varphi\rho^{m_k}}(\mathbb{R}^n)$ such that $\mathrm{supp}\,w_1\subset G$, and put $w:=(w_1,0)\in (H^{\varphi\rho^{m_k}}(\mathbb{R}^n))^{p_k}$, where $p_k:=\dim V_{1,k}$. There exists a section $v\in H^{\varphi\rho^{m_k}}(\Gamma, V_{1,k})$ such that $\mathrm{supp}\,v\subset\alpha_{1}(G)$ and $v^{\times}_{1}\circ\alpha_{1}=w$, where $(v^{\times}_{1},\ldots,v^{\times}_{\varkappa})$ is the representation of $v$ in the local trivializations of the vector bundle
$\pi_{1,k}:V_{1,k}\to\Gamma$. Then $v\in C^{q}(\Gamma_0,V_{1,k})$ by \eqref{H-C-remark}, which implies that $w_1\in C^{q}(\mathbb{R}^n)$. Thus,
\begin{equation*}
\{w_1\in H^{\varphi\rho^{m_k}}(\mathbb{R}^n):\mathrm{supp}\,w_1\subset G\} \subset C^q(\mathbb{R}^n).
\end{equation*}
This entails \eqref{condition-classical-smoothness} by Proposition~\ref{p3}.
\end{proof}

\subsection*{Acknowledgments.} The first named author was funded by the National Academy of Sciences of Ukraine, a grant from the Simons Foundation (1030291, A.A.M., and 1290607, A.A.M.), and Universities-for-Ukraine (U4U) Non-residential Fellowship granted by UC Berkeley Economics/Haas. The second named author was funded by the program "Short-Term Grants" of DAAD. The work was also supported by the European Union's Horizon 2020 research and innovation programme under the Marie Sk{\l}odowska-Curie grant agreement No~873071 (\nobreak{SOMPATY}: Spectral Optimization: From Mathematics to Physics and Advanced Technology).

\end{document}